\title{The Coarea Inequality}
\author{Behnam Esmayli, Piotr Haj{\l}asz}
\address{B.\ Esmayli: Department of Mathematics, University of Pittsburgh, Pittsburgh, PA 15260, USA, {\tt bee23@pitt.edu}}
\address{P.\ Haj{\l}asz: Department of Mathematics, University of Pittsburgh, 301
  Thackeray Hall, Pittsburgh, PA 15260, USA, {\tt hajlasz@pitt.edu}}
 \thanks{P.H. was supported by NSF grant  DMS-1800457.}
\newtheorem{theorem}{Theorem}
\newtheorem{lemma}[theorem]{Lemma}
\newtheorem{corollary}[theorem]{Corollary}
\theoremstyle{definition}
\newtheorem{remark}[theorem]{Remark}
\newtheorem{definition}[theorem]{Definition}
\newcommand{\barint}{
\rule[.036in]{.12in}{.009in}\kern-.16in \displaystyle\int }
\newcommand{\barcal}{\mbox{$ \rule[.036in]{.11in}{.007in}\kern-.128in\int $}}
\newcommand{\bbbn}{\mathbb N}
\newcommand{\bbbz}{\mathbb Z}
\newcommand{\bbbr}{\mathbb R}
\newcommand{\eps}{\varepsilon}
\def\5{\text{\Saturn}}
\def\diam{\operatorname{diam}}
\def\H{{\mathcal H}}
\def\lip{{\rm Lip\,}}
\def\mvint_#1{\mathchoice
          {\mathop{\vrule width 6pt height 3 pt depth -2.5pt
                  \kern -8pt \intop}\nolimits_{\kern -3pt #1}}%
%%%% P.S., 01/03/2001
% old definition had ...\nolimits_{#1}}
% \kern -3pt makes nicer distances between the integral sign
% and the domain of integration
%%%%
          {\mathop{\vrule width 5pt height 3 pt depth -2.6pt
                  \kern -6pt \intop}\nolimits_{#1}}%
          {\mathop{\vrule width 5pt height 3 pt depth -2.6pt
                  \kern -6pt \intop}\nolimits_{#1}}%
          {\mathop{\vrule width 5pt height 3 pt depth -2.6pt
                  \kern -6pt \intop}\nolimits_{#1}}}
\numberwithin{theorem}{section} \numberwithin{equation}{section}
\begin{document}

\subjclass[2010]{28A25, 28A75, 28A78, 30L}
\keywords{metric spaces; Hausdorff measure; coarea inequality}
\sloppy

%\dsp

\begin{abstract}
The aim of this paper is to provide a self-contained proof of a general case of the coarea inequality, also known as the Eilenberg inequality. The result is known, but we are not aware of any place that a proof would be written with all details. The known proof is based on a difficult result of Davies. Our proof is elementary and does not use Davies' theorem. Instead we use an elegant argument that we learned from Nazarov through MathOverflow. We also obtain some generalizations of the coarea inequality. 
\end{abstract}

\maketitle

\section{Introduction}

The aim of this paper is to provide an elementary and self-contained proof of the following result which is known under the name of the {\em coarea inequality} or the  {\em Eilenberg inequality}. 
\begin{theorem}
\label{T14}
Let $X$ and $Y$ be arbitrary metric spaces, $0\leq t\leq s<\infty$ (any) real numbers and $E\subset X$ any subset. 
Then, for any Lipschitz map $ f: X \to Y$ we have
\begin{equation}
\label{eq29}
\int^*_Y \mathcal{H}^{s-t} \left(f^{-1}(y) \cap E\right)\,  d\mathcal{H}^t (y) \leq \left(\lip{f}\right)^t \, \frac{\omega_{s-t} \omega_t}{\omega_{s}}\mathcal{H}^{s}(E) \, .
\end{equation}
Moreover if $X$ is boundedly compact i.e., bounded and closed sets in $X$ are compact, $E$ is $\H^s$-measurable, 
and $\H^s(E)<\infty$, then the function
\begin{equation}
\label{eq28}
y\mapsto \mathcal{H}^{s-t} \left(f^{-1}(y) \cap E\right)
\end{equation}
is $\H^t$-measurable and therefore, the upper integral  can be replaced with the usual integral.
\end{theorem}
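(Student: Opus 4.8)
\emph{Step 1 --- the inequality \eqref{eq29}.} Assume $\lip f>0$ (the constant case being immediate), and fix $\delta>0$. If $\{E_i\}$ is any countable cover of $E$ by sets of diameter $\le\delta$, then for each $y\in Y$ the subfamily $\{E_i:\ y\in f(E_i)\}$ covers $f^{-1}(y)\cap E$ by sets of diameter $\le\delta$, so
\[
\mathcal H^{s-t}_\delta\big(f^{-1}(y)\cap E\big)\ \le\ \sum_i\omega_{s-t}\Big(\tfrac{\diam E_i}{2}\Big)^{s-t}\mathbf 1_{f(E_i)}(y).
\]
Integrating this pointwise bound against the outer measure $\mathcal H^t_{(\lip f)\delta}$, and using the elementary properties of the upper integral of a nonnegative function --- countable subadditivity and $\int^*_Y\mathbf 1_A\,d\nu=\nu(A)$ for a Borel regular outer measure $\nu$ --- together with $\mathcal H^t_{(\lip f)\delta}(f(E_i))\le\omega_t\big((\lip f)\diam E_i/2\big)^t$ (valid since $\diam f(E_i)\le(\lip f)\diam E_i\le(\lip f)\delta$), and then taking the infimum over covers, gives
\[
\int^*_Y\mathcal H^{s-t}_\delta\big(f^{-1}(y)\cap E\big)\,d\mathcal H^t_{(\lip f)\delta}(y)\ \le\ (\lip f)^t\,\frac{\omega_{s-t}\omega_t}{\omega_s}\,\mathcal H^s_\delta(E).
\]
As $\delta\downarrow 0$ the integrands increase to $\mathcal H^{s-t}(f^{-1}(y)\cap E)$ and the outer measures $\mathcal H^t_{(\lip f)\delta}$ increase to $\mathcal H^t$, and \eqref{eq29} follows by a routine double-limit argument based on the monotone-convergence properties of the upper integral.

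\emph{Step 2 --- reduction of the measurability assertion to compact sets.} Exhaust $X$ by compact balls $X_1\subset X_2\subset\cdots$; on each $X_k$ the finite Borel measure $A\mapsto\mathcal H^s(A\cap E)$ is inner regular by compact subsets of $E$, so one obtains compact sets $K_1\subset K_2\subset\cdots\subset E$ with $\mathcal H^s(E\setminus K)=0$, where $K=\bigcup_jK_j$. Applying \eqref{eq29} to the $\mathcal H^s$-null set $E\setminus K$ shows $\mathcal H^{s-t}\big(f^{-1}(y)\cap(E\setminus K)\big)=0$ for $\mathcal H^t$-a.e.\ $y$, hence
\[
\mathcal H^{s-t}\big(f^{-1}(y)\cap E\big)\ =\ \mathcal H^{s-t}\big(f^{-1}(y)\cap K\big)\ =\ \sup_j\,\mathcal H^{s-t}\big(f^{-1}(y)\cap K_j\big)\qquad\text{for }\mathcal H^t\text{-a.e.\ }y,
\]
the last equality by continuity from below of the outer measure $\mathcal H^{s-t}$ along the increasing, closed (hence measurable) sets $f^{-1}(y)\cap K_j$. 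Thus it suffices to prove: \emph{for every compact $K\subset X$, the map $y\mapsto\mathcal H^{s-t}(f^{-1}(y)\cap K)$ is Borel.} Granting this, the function in \eqref{eq28} coincides $\mathcal H^t$-a.e.\ with a Borel function, hence is $\mathcal H^t$-measurable by completeness of $\mathcal H^t$, and the upper integral in \eqref{eq29} becomes an ordinary integral.

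\emph{Step 3 --- the compact case.} Write $r=s-t$ and, for $n\ge 1$, $F_n(y)=f^{-1}\big(\overline B(y,1/n)\big)\cap K$; these sets are compact and decrease to $f^{-1}(y)\cap K$ as $n\to\infty$. Let $N^r_\eta(C)$ denote the Hausdorff content of $C$ computed with \emph{finite} covers by sets of diameter \emph{strictly less than} $\eta$. The plan is to establish the identity
\[
\mathcal H^r\big(f^{-1}(y)\cap K\big)\ =\ \sup_{m\ge 1}\ \inf_{n\ge 1}\ N^r_{1/m}\big(F_n(y)\big)
\]
and the fact that, for fixed $m$ and $n$, the function $y\mapsto N^r_{1/m}(F_n(y))$ is upper semicontinuous --- so that the right-hand side is Borel. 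Upper semicontinuity holds because, for open sets $V_1,\dots,V_l$ with $\diam V_i<1/m$, one has $F_n(y)\subset\bigcup_iV_i$ if and only if $\dist\big(y,\,f(K\setminus\bigcup_iV_i)\big)>1/n$, with $f(K\setminus\bigcup_iV_i)$ compact; consequently $\{y:N^r_{1/m}(F_n(y))<c\}$ is a union of the open sets $\{y:\dist(y,f(K\setminus\bigcup_iV_i))>1/n\}$, one for each such finite open family of total content $<c$. In the identity, ``$\ge$'' is immediate from $F_n(y)\supset f^{-1}(y)\cap K$ and the definition of $N^r_\eta$; for ``$\le$'' one covers the compact set $f^{-1}(y)\cap K$ efficiently at a scale $<1/m$, slightly enlarges each piece to an open set of diameter still $<1/m$, extracts a finite subcover --- which then covers $F_n(y)$ for all large $n$ --- and lets the scale tend to $0$.

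\emph{The main obstacle.} I expect Step 3 to be where the real work lies. The delicate points are that one must pass to \emph{finite} covers and must keep the two scales strictly separated in the identity: the more natural looking formula $\mathcal H^r(f^{-1}(y)\cap K)=\sup_m\inf_n\mathcal H^r_{1/m}(F_n(y))$ is \emph{false}, because $\mathcal H^r_\delta$ is not continuous from above along decreasing sequences of compact sets (for instance $\mathcal H^0_1([0,1+\tfrac1n])=2$ for every $n$, while $\mathcal H^0_1([0,1])=1$). One should also verify that Steps 2--3 go through unchanged when $s=t$, i.e.\ $r=0$: there $\mathcal H^0$ is counting measure and the assertion of Step 3 reduces to Borel measurability of the multiplicity function $y\mapsto\card\big(f^{-1}(y)\cap K\big)$ of a continuous map on a compact set, which the same argument delivers.
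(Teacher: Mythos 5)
Your Steps 2--3 (the measurability part) are fine: the reduction to compact sets via inner regularity and the null set, and the Borel measurability of $y\mapsto\H^{s-t}(f^{-1}(y)\cap K)$ for compact $K$ through upper semicontinuity of content-type functions, is essentially the argument the paper gives (the paper phrases it with the open-cover contents $\mathscr{H}^{s-t}_{1/j}$ and openness of the sublevel sets; your sup--inf identity with finite covers is a harmless variant resting on the same compactness argument).

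The genuine gap is in Step 1, and it is exactly the point where the whole difficulty of the theorem lies. First, the object $\int^*_Y(\cdot)\,d\H^t_{(\lip f)\delta}$ you integrate against is not available in the way you use it: the upper integral is defined through measurable majorants for a measure on its $\sigma$-algebra, and for the content $\H^t_\delta$ (fixed $\delta>0$) Borel sets are in general not Carath\'eodory measurable (the content is not additive on Borel partitions), so neither $\int^*\chi_A\,d\H^t_\delta=\H^t_\delta(A)$ nor the countable subadditivity $\int^*\sum_i a_i\chi_{F_i}\,d\H^t_\delta\le\sum_i a_i\H^t_\delta(F_i)$ can be invoked; the set function that \emph{does} satisfy such an inequality by fiat is the weighted integral $\int^\bullet$ of Definition~\ref{D3}. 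If you instead integrate against the honest measure $\H^t$, the per-piece bound you need, $\H^t(f(E_i))\le\zeta^t(f(E_i))$, is false in a general target $Y$: sets of small diameter can have enormous $\H^t$-measure, which is precisely the obstruction (Remark~\ref{R6}) that confines the classical Eilenberg computation to $Y=\bbbr^m$ via the isodiametric inequality. So what your covering computation legitimately produces is the weighted-integral inequality of Lemma~\ref{T17} (Federer's inequality \eqref{eq24}), not an upper-integral inequality at scale $\delta$.

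Second, the concluding ``routine double-limit argument'' as $\delta\downarrow 0$ is not routine: upgrading the weighted-integral bound to the upper-integral bound \eqref{eq29} is equivalent to Theorem~\ref{T15} (that $\int^\bullet g\,d\H^t=\int^* g\,d\H^t$ for arbitrary $g$ and arbitrary $Y$). Federer could perform this passage only when the integrand is positive on a $\sigma$-finite set or when $Y$ is boundedly compact; the general case was an open problem he stated, settled only through Davies' increasing sets lemma, and the present paper's entire contribution is an elementary proof of this step via Nazarov's weighted covering theorem (Theorem~\ref{T11}, Corollary~\ref{T13}) and Theorem~\ref{T7}, occupying Sections 4--6. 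There is no monotone convergence theorem that lets you send $\delta\to0$ simultaneously in the integrand and in the set function $\H^t_{(\lip f)\delta}$ under an upper integral, so as written Step 1 assumes the deep result rather than proving it.
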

Here $\H^\alpha$ stands for the $\alpha$-dimensional Hausdorff measure and
$\int^* g \, d\mu$ is the upper integral which does not require measurability of the integrand.

\begin{remark}
In general, we cannot expect measurability of the function \eqref{eq28} as the following simple example shows:
Let $V\subset\bbbr$ be a non-measurable set. Let $X=V$, $Y=\bbbr$ and $f:X\to Y$, $f(x)=x$. Then for $s=t=1$, and $E=X$, the function \eqref{eq28} is the characteristic function of $V$ and therefore is not measurable.
It was communicated to us by Pertti Mattila \cite{mattila2} that
\eqref{eq28} is measurable with respect to the sigma-algebra generated by analytic sets if $X$ and $Y$ are Polish spaces and $E$ is analytic. This is a consequence of the work of Dellacherie \cite{del}, see Remark~7.8 in \cite{mattila}. 
However, we did not verify this statement.
\end{remark}

Proving measurability of \eqref{eq28} under the given assumptions is not difficult, see Section~\ref{NC}, and the main difficulty rests in proving inequality \eqref{eq29}. Thus in the discussion below we will focus on \eqref{eq29} only.

The inequality was first proved by Eilenberg \cite{eilenberg} in 1938 in the case when $t=1$, $Y=\bbbr$ and $f(\cdot)=d(\cdot,x_o)\to\bbbr$ is the distance to a point on a metric space $X$. Then it was generalized in \cite{eh} to the case of  $t=1$, $Y=\bbbr$ and $f:X\to\bbbr$ any Lipschitz function. 

It seems however, that a related argument was used by Szpilrajn\footnote{He changed his name to Marczewski while hiding from Nazi persecution.} \cite{szpilrajn} in the proof that if $\H^{n+1}(X)=0$, then  the topological dimension of $X$ is at most $n$. Szpilrajn's proof is reproduced in \cite[Theorem~7.3]{HW} and \cite[Theorem~8.15]{heinonen}. 
Szpilrajn mentions that his argument is based on N\"obeling's proof of a weaker result that the topological dimension is bounded from above by the Hausdorff dimension of a metric space \cite{nobeling} (N\"obeling's paper is reproduced in \cite{menger}). The reader may find a translation of N\"obeling's paper  in MathOverflow~\cite{MO2}, and it is clear that his argument was closely related to Eilenberg's inequality for the distance function.
From reading Szpilrajn's paper, it is also clear that there was a strong collaboration between him and Eilenberg. 

\begin{remark}
\label{R6}
Most of the proofs that the reader may find in the literature \cite[Theorem~13.3.1]{buragoz}, 
\cite[Lemma~5.2.4]{KP}, \cite[Theorem~7.7]{mattila}, apply to the case of Lipschitz mappings $f:X\to\bbbr^m$ and $t=m$, and the proofs do not differ much from that in \cite{eh}.
Since the proofs use the fact that for a subset $A\subset Y=\bbbr^m$, the isodiamteric inequality holds, that is $\H^m(A)\leq \omega_m(\diam A)^m/2^m$, there is no obvious way how such proofs could be generalized to other metric spaces $Y$.
\end{remark}
\begin{remark}
Regarding coarea inequality for mappings into metric spaces one should mention an interesting paper by Mal\'y \cite{maly}.
The result given in \cite[Proposition~3.1.5]{ambrosiot} covers the general case but, as confirmed by the authors,  the proof is incorrect.
\end{remark}

Proving the result in a more general case was a remarkable achievement of Federer \cite{federer2}, see also \cite[Theorem~2.10.25]{federer}. However, he could prove Theorem~\ref{T14} only under additional assumptions that 
\begin{itemize}
\item[(a)] The integrand $\H^{s-t}(f^{-1}(y)\cap E)$ is positive (only) on a set of $\sigma$-finite measure $\H^t$; or
\item[(b)] The space $Y$ is {\em boundedly compact}, meaning that bounded and closed sets are compact.
\end{itemize}
His strategy was as follows. 
He first proved an inequality 
more or less equivalent to 
(see Lemma~\ref{T17} and Remark~\ref{R5} below),
\begin{equation}
\label{eq24}
\int^\bullet_Y \H^{s-t}_{\delta} \left(f^{-1}(y) \cap E\right) \, d\H^t 
\leq \left(\lip{f}\right)^t \, \frac{\omega_{s-t} \omega_t}{\omega_{s}}\mathcal{H}^{s}(E)\, ,
\end{equation}
where the left-hand side is the \textit{weighted integral} (see Definition~\ref{D3}). Federer \cite[2.10.24]{federer} used however, different notation (see Remark~\ref{R4}).

This inequality follows from a straightforward covering argument. 
In fact the proof is very similar to the classical proof due to Eilenberg, the one the reader can find in \cite{buragoz,KP,mattila}, see Remark~\ref{R6}.

The coarea inequality then follows from the following theorem -- which is of independent interest  -- and a simple monotone convergence theorem for upper integrals as $\delta\to 0^+$.
\begin{theorem}
\label{T15}
Let $Y$ be an arbitrary metric space.
For $t \in [0,\infty)$, and any $g:Y \to [0,\infty]$ we have
$$
\int^*_Y g(y) \ d\mathcal{H}^t(y)=
\int^\bullet_Y g(y) \ d\mathcal{H}^t\, .
$$
\end{theorem}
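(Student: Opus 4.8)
The plan is to prove the two inequalities separately, keeping in mind that, by definition (see Definition~\ref{D3}), $\int^\bullet_Y g\, d\H^t=\int_0^\infty \H^t(\{g>r\})\, dr$, and that this distribution-function integral agrees with the ordinary integral $\int_Y h\, d\H^t$ whenever $h\colon Y\to[0,\infty]$ is $\H^t$-measurable (the layer-cake formula, applied to the genuine measure obtained by restricting $\H^t$ to its measurable sets).

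The inequality $\int^*_Y g\, d\H^t\ge\int^\bullet_Y g\, d\H^t$ is the easy half. If $h\ge g$ is $\H^t$-measurable, then $\{h>r\}\supseteq\{g>r\}$ for every $r\ge 0$, so monotonicity of $\H^t$ together with the layer-cake formula gives $\int_Y h\, d\H^t=\int_0^\infty\H^t(\{h>r\})\, dr\ge\int_0^\infty\H^t(\{g>r\})\, dr=\int^\bullet_Y g\, d\H^t$; taking the infimum over all such $h$ yields the claim.

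For the reverse inequality I may assume $\int^\bullet_Y g\, d\H^t<\infty$, which forces the non-increasing function $\psi(r):=\H^t(\{g>r\})$ to be finite for every $r>0$. The goal is to exhibit a \emph{single} $\H^t$-measurable $h\ge g$ with $\int_Y h\, d\H^t=\int^\bullet_Y g\, d\H^t$. First I would use Borel regularity of $\H^t$ (valid on an arbitrary metric space) to pick, for each rational $q>0$, a Borel set $A_q\supseteq\{g>q\}$ with $\H^t(A_q)=\psi(q)$. Since these hulls need not be nested, I replace them by $C_q:=\bigcap_{p\in\mathbb{Q}\cap(0,q]}A_p$; one checks that each $C_q$ is Borel, that $\{g>q\}\subseteq C_q\subseteq A_q$ (so $\H^t(C_q)=\psi(q)$), and --- the point of the construction --- that $q\mapsto C_q$ is non-increasing. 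Now set $h(y):=\sup\{q\in\mathbb{Q}_{>0}:y\in C_q\}$, with $\sup\emptyset=0$. Then $\{h>r\}=\bigcup_{q\in\mathbb{Q},\,q>r}C_q$ is Borel, so $h$ is measurable; the inclusion $\{g>q\}\subseteq C_q$ forces $h\ge g$ pointwise; and since the $C_q$ are nested, $\bigcup_{q>r}C_q$ is the increasing union $\bigcup_n C_{q_n}$ for any rational sequence $q_n\downarrow r$, so continuity from below of $\H^t$ on measurable sets gives $\H^t(\{h>r\})=\lim_{q\downarrow r}\psi(q)=\psi(r^+)$. Because $\psi$ is monotone, $\psi(r^+)=\psi(r)$ for all but countably many $r$, hence $\int_Y h\, d\H^t=\int_0^\infty\H^t(\{h>r\})\, dr=\int_0^\infty\psi(r^+)\, dr=\int_0^\infty\psi(r)\, dr=\int^\bullet_Y g\, d\H^t$, and therefore $\int^*_Y g\, d\H^t\le\int_Y h\, d\H^t=\int^\bullet_Y g\, d\H^t$, which completes the proof.

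The genuinely load-bearing step --- and the one I expect to demand the most care --- is the passage from the individual Borel hulls $A_q$ to the nested family $C_q$ without enlarging their measures, since everything afterward is a routine combination of the layer-cake formula, continuity from below, and the fact that a monotone function coincides a.e.\ with its one-sided limit; one should also make sure that Borel regularity of $\H^t$ on an arbitrary metric space is available.
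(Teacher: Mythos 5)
Your argument breaks down at the very first step: the identity $\int^\bullet_Y g\, d\H^t=\int_0^\infty \H^t(\{g>r\})\, dr$ is \emph{not} the definition of the weighted integral and is not available ``by Definition~\ref{D3}''. In this paper $\int^\bullet_Y g\, d\H^t_\delta$ is an infimum of sums $\sum_i a_i\zeta^t(A_i)$ over all weighted $\delta$-coverings $\{(a_i,A_i)\}$ with $g\le\sum_i a_i\chi_{A_i}$, where the cost $\zeta^t(A_i)$ is computed from the \emph{diameter} of $A_i$, not from its $\H^t$-measure; one then lets $\delta\to 0^+$. Already for $g=\chi_E$ your claimed ``definition'' would read $\lambda^t(E)=\H^t(E)$, which is exactly the nontrivial identity \eqref{eq22} of Theorem~\ref{T7} — the part of the paper that requires the weighted covering theorem of Nazarov (Theorem~\ref{T11} and Corollary~\ref{T13}) together with the density Lemma~\ref{T6}, and which historically required Davies' increasing sets lemma in Federer's approach. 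So the entire difficulty of Theorem~\ref{T15} is hidden inside the sentence you treat as a definition, and nothing in your proposal engages with weighted coverings at all: you never show that a weighted $\delta$-covering of $g$ cannot have total cost asymptotically smaller than $\int_0^\infty\H^t(\{g>r\})\, dr$, which is precisely the inequality $\int^*\le\int^\bullet$ that the paper proves in Section~6 (via Theorem~\ref{T7}, Lemma~\ref{T6}, and a three-case analysis according to whether $\{g>0\}$ has finite, $\sigma$-finite, or non-$\sigma$-finite measure).

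What you do prove correctly is a different statement: for the Borel regular outer measure $\H^t$ (Lemma~\ref{T33}), the upper integral coincides with the distribution-function integral, $\int^*_Y g\, d\H^t=\int_0^\infty\H^t(\{g>r\})\, dr$. The nested-hull construction $C_q=\bigcap_{p\le q}A_p$, the measurable envelope $h(y)=\sup\{q:\ y\in C_q\}$, and the use of continuity from below plus the a.e.\ right-continuity of the monotone function $\psi$ are all fine. This is a clean and potentially useful reformulation of the left-hand side, but it leaves the right-hand side of Theorem~\ref{T15} untouched: to complete a proof along your lines you would still have to show that the weighted-covering quantity of Definition~\ref{D3} equals $\int_0^\infty\H^t(\{g>r\})\, dr$, and that is where the real work — the easy direction via step functions (Lemma~\ref{T2}) and the hard direction via Theorem~\ref{T7} — actually lies.
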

Federer \cite[2.10.24]{federer} proved this result under the restrictive assumption that one of the following two conditions is satisfied:
(a') The function $g$ is positive on a set of $\sigma$-finite measure $\H^t$;  or
(b') the space $Y$ is boundedly compact. Therefore he could only prove Theorem~\ref{T14} under the assumptions (a) or (b) listed above.

While the inequality
$$
\int^*_Y g(y) \ d\mathcal{H}^t(y)\geq \int^\bullet_Y g(y)\, d\H^t(y)
$$
is easy to prove in the general case (see \eqref{eq6}), the problem is to prove the opposite inequality (Federer proved it when (a') or (b') holds true). In the general case,
Federer \cite[p. 187]{federer} stated the following: 

{\em The general problem whether or not the preceding inequality can always be replaced by the corresponding equation is unsolved.}

The problem was answered in the positive by Davies \cite[page~236]{davies}: 

{\em Note added 8 September 1969. H. Federer tells me that this work
answers a question he raised in Geometric measure theory (Berlin, 1969) [...]} 

There is no explicit proof of Theorem~\ref{T15} in the work of Davies, but the main result of Davies \cite[Theorem~8, Example~1]{davies}, provides a missing step in generalizing Federer's proof. In fact it is the celebrated Increasing Sets Lemma \cite[Theorem~8]{davies} that was needed to complete Federer's proof:

\begin{theorem}
\label{T19}
Suppose $(X,d)$ is an arbitrary metric space, $t \in [0,\infty)$, and $\delta > 0$. Then for any increasing sequence of subsets $A_1 \subset A_2 \subset A_3 \subset \cdots$, 
$$
\mathcal{H}^t_\delta \Big(\bigcup_i A_i\Big) = \lim_{i\to\infty} \mathcal{H}^t_\delta (A_i) \, . 
$$
\end{theorem}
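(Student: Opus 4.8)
The inequality $\mathcal H^t_\delta\big(\bigcup_iA_i\big)\ge\lim_i\mathcal H^t_\delta(A_i)$ is immediate from monotonicity of $\mathcal H^t_\delta$; all the difficulty is in the reverse inequality, and the first point is that the usual soft proof of ``continuity from below'' does not apply here. For a fixed finite $\delta$ the premeasure $\mathcal H^t_\delta$ is not a metric outer measure (Borel sets need not be $\mathcal H^t_\delta$-measurable), and it is not outer regular with respect to open sets in general, so one cannot replace the $A_i$ by measurable, Borel, or open hulls of equal $\mathcal H^t_\delta$-value and conclude. The reverse inequality must therefore be obtained by directly assembling an efficient cover of $A:=\bigcup_iA_i$ out of efficient covers of the individual $A_i$, and that assembly is the whole problem.

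More precisely, one may assume $L:=\lim_i\mathcal H^t_\delta(A_i)<\infty$, fix $\eps>0$, and for each $i$ fix a countable cover $\mathcal C_i=\{C_{i,1},C_{i,2},\dots\}$ of $A_i$ by closed sets of diameter $\le\delta$ with $\sum_j(\diam C_{i,j})^t<L+\eps$; the goal is a single cover of $A$ by sets of diameter $\le\delta$ of total content $\le L+\eps$ up to a vanishing error (the fixed normalizing constant in $\mathcal H^t_\delta$ is irrelevant and is suppressed throughout). The one piece of information available uniformly in $i$ is that for every $r\in(0,\delta]$ the family $\mathcal C_i$ contains fewer than $(L+\eps)\,r^{-t}$ sets of diameter $\ge r$ (fewer than $L+\eps$ of them when $t=0$), a bound not depending on $i$; equivalently, after sorting each $\mathcal C_i$ by decreasing diameter, its $j$-th largest set has diameter $\le\big((L+\eps)/j\big)^{1/t}\to0$ uniformly in $i$ when $t>0$.

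The plan is to build the target cover $\mathcal D$ of $A$ dyadic scale by dyadic scale, from large to small: at stage $k$ one commits finitely many new sets of diameter $\approx2^{-k}\delta$, drawn from or built out of the $\mathcal C_i$, so as to arrange that (i) the cumulative committed content $\sigma_k$ never exceeds $L+\eps$, and (ii) the still-uncovered set $R_k:=A\setminus\bigcup\mathcal D_{\le k}$ becomes negligible at scale $2^{-k}\delta$, i.e. $\mathcal H^t_{2^{-k}\delta}(R_k)\to0$ as $k\to\infty$. Granting (i) and (ii), $\mathcal D:=\bigcup_k\mathcal D_{\le k}$ is a countable family of sets of diameter $\le\delta$ of total content $\sup_k\sigma_k\le L+\eps$ covering $A$ except for $R_\infty:=\bigcap_kR_k$, and $\mathcal H^t_\delta(R_\infty)\le\inf_k\mathcal H^t_{2^{-k}\delta}(R_k)=0$; adjoining to $\mathcal D$ a cover of $R_\infty$ by sets of diameter $\le\delta$ of content $<\eps'$ yields a cover of $A$ of total content $<L+\eps+\eps'$, and letting $\eps',\eps\downarrow0$ gives $\mathcal H^t_\delta(A)\le L$.

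The main obstacle is realizing (i)--(ii): this forces one to choose the finitely many committed sets at each scale \emph{coherently across all $i$ simultaneously} — making genuine progress toward covering every $A_i$ while never breaching the global budget $L+\eps$ — and the naive attempts become circular, or break down because the tails $\sum_{j>N}(\diam C_{i,j})^t$ need not be small uniformly in $i$, and because no compactness is available in the (arbitrary) ambient space. This coherent recursive selection is the combinatorial heart of Davies' Increasing Sets Lemma, and I expect it to be by far the hardest step, with no soft substitute. As a sanity check, the case $t=0$ already shows the phenomenon but is easy: if each $A_i$ is coverable by at most $K=\lceil L\rceil$ sets of diameter $\le\delta$, then since every finite $F\subset A$ lies in some $A_i$ and hence splits into at most $K$ pieces of diameter $\le\delta$, a K\"onig/Tychonoff argument on $\{1,\dots,K\}^A$ produces a $K$-colouring of $A$ whose colour classes have diameter $\le\delta$ and cover $A$; the obstruction for $t>0$ is that the weighted content, unlike the plain number of sets, is not preserved under the refinements one is forced to perform.
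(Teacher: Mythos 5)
Your submission is not a proof: it is an accurate diagnosis of why the theorem is hard, followed by an explicit admission that the decisive step is missing. The easy inequality, the observation that $\mathcal{H}^t_\delta$ (for fixed $\delta$) is neither a metric outer measure nor outer regular, the uniform bound on the number of sets of diameter $\geq r$ in each near-optimal cover $\mathcal{C}_i$, and the $t=0$ compactness argument are all correct; and the reduction ``if one can commit sets scale by scale so that the cumulative content stays $\leq L+\eps$ while $\mathcal{H}^t_{2^{-k}\delta}(R_k)\to 0$, then the theorem follows'' is sound as stated. But conditions (i)--(ii) are never realized: you give no rule for choosing the finitely many sets committed at stage $k$, no argument that such a coherent choice across all $i$ exists, and you yourself say that this selection ``is the combinatorial heart of Davies' Increasing Sets Lemma\dots with no soft substitute.'' Since that selection \emph{is} the content of the statement you were asked to prove, deferring to it is circular, and the entire mathematical substance of the theorem is absent from your write-up.

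For context: the paper does not prove this theorem either. It is quoted as Davies' result (\cite[Theorem~8]{davies}), whose known proof uses Ramsey's theorem, ordinal numbers and non-principal ultrafilters, and the paper's stated purpose is to \emph{avoid} it: the coarea inequality is obtained instead through the weighted-integral identity of Theorem~\ref{T5}, proved via Nazarov's weighted covering theorem (Theorem~\ref{T11} and Corollary~\ref{T13}). So there is no in-paper argument your outline could be matched against; if you want an elementary route to the application at hand, the productive move is not to attack the Increasing Sets Lemma head-on by the scale-by-scale scheme you sketch (whose tails $\sum_{j>N}(\diam C_{i,j})^t$ indeed need not be uniformly small, exactly as you note), but to bypass it as the paper does.
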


With Theorem~\ref{T15} being true for an arbitrary metric space $Y$, Federer's proof of Theorem~\ref{T14} applies to the case of arbitrary metric spaces $X$ and $Y$. 

From what we could dig out from the literature, it would be fair to call Theorem~\ref{T14} the N\"obeling-Szpilrajn-Eilenberg-Federer-Davies inequality.

Surprisingly, it wasn't until 2009 when Reichel \cite{reichel} in his PhD thesis, re-wrote a complete proof of Theorem~\ref{T14} in its full generality, by following the original proof of Federer while making use of Davies' result.
Reichel's thesis seems to be the only place with a complete proof of Theorem~\ref{T14}, except that Reichel did not include the proof of Davies' theorem. 

Davies' theorem \cite[Theorem~8]{davies} (Theorem~\ref{T19} above)
is very difficult and its proof makes use of Ramsey's theorem, ordinal numbers and  non-principal ultrafilters.

In the paper we present a new and elementary proof of Theorem~\ref{T15}
(reformulated below as Theorem~\ref{T5}) that completely avoids the use of Davies' result. It is based on a beautiful argument that we learned from Nazarov \cite{nazarov}. Then we prove Theorem~\ref{T14} including all necessary details. 

Most of the older applications of Theorem~\ref{T14} are in the case of Lipschitz mappings $f:X\to\bbbr^m$ and $t=m$. However, in a recent development of analysis on metric spaces, the general version of Theorem~\ref{T14} plays an increasingly important role. 
It is a fundamental result and it deserves to have a proof that is self-contained and easy to read. Our proof of how to conclude Theorem~\ref{T14} from Theorem~\ref{T15},
follows Federer's argument, but we believe is much easier to read than Federer's proof. In writing this proof we also used a presentation of Federer's proof given in \cite{reichel}. 

In fact we prove more general versions of Theorem~\ref{T14} in Section~\ref{NC}: Theorem~\ref{T20} and Theorem~\ref{T30}. As explained in Remark~\ref{R7} these are substantial improvements of Theorem~\ref{T14}.
In Section~\ref{MC} we show an application of Theorem~\ref{T20}
to the {\em $(n,m)$-mapping content} introduced in \cite{azzams,davids}. 

The paper is structured as follows.
Section~\ref{PR} contains basic material from  measure theory needed in the rest of the paper. This material is standard, but some of the results, although contained in Federer's book, seem to be not very well known. The reader might want to skip Section~\ref{PR}, go directly to Section~\ref{WI} and return to Section~\ref{PR} whenever necessary.

Section~\ref{WI} defines the weighted integrals and weighted measures and proves Lemma \ref{T17} which is a version of Theorem~\ref{T14} with weighted integral in place of the upper integral. This section also has statements of the two main results regarding weighted integral: Theorem~\ref{T7} and Theorem~\ref{T5} (i.e., Theorem~\ref{T15}).

Section~\ref{CT} is focused on Theorem~\ref{T11} and Corollary~\ref{T13} which are of independent interest. These are general results that are essentially combinatorial and are not limited to the specific setting of our problem. They play a central role in the proof of Theorem~\ref{T7}.

In Section~\ref{3.11} we prove Theorem~\ref{T7}. The proof is very short only because of the use of powerful Corollary~\ref{T13}.

In Section~\ref{3.13} we prove Theorem~\ref{T5}. Section \ref{NC} contains the proof of Theorem~\ref{T14} and its generalization Theorem~\ref{T20}. We end with applications to the mapping densities, introduced in \cite{HZ}, and to the $(n,m)$-mapping content \cite{azzams,davids}. Theorem~\ref{T30} can be viewed as yet another coarea inequality, although only under finer assumptions on the metric spaces.

\subsection{Notation}
Open and closed balls in a metric space $(X,d)$ will be denoted by $B(x,r)=\{y:\, d(x,y)<r\}$ and $\bar{B}(x,r)=\{y:\, d(x,y)\leq r\}$, respectively. Closure of a set $E$ will be denoted by $\bar{E}$; as a warning, note that in general closed ball might be strictly larger than the closure of the open ball. Symbol $B$ will always be used to denote a ball, open or closed. If $B=B(x,r)$ is a ball, $\sigma B=B(x,\sigma r)$, $\sigma>0$, will denote a dilated ball (the same notation is used for closed balls).

The characteristic function of a set $E$ will be denoted by $\chi_E$.

A metric space is {\em boundedly compact} if bounded and closed sets are compact.

A  map $f:X \to Y$ between metric spaces is called {\em Lipschitz} if there exists an $L\geq 0$ such that $d_Y (f(x),f(y)) \leq L d_X(x,y)$ for all   $x$  and  $y$  in $X$.
The smallest such $L$, denoted $\lip{f}$, is \textit{the Lipschitz constant} of $f$.

The integral average will be denoted by the barred integral:
$$
\mvint_E f\, d\mu =\frac{1}{\mu(E)}\int_E f\, d\mu.
$$
Hausdorff measure will be denoted by $\H^s$. It is normalized so that on $\bbbr^n$ the measure $\H^n$ coincides with the Lebesgue measure, see Section~\ref{HM} for more details.

For $A \subset X$, $\diam A = \sup\{d(x,y): x,y \in A\}$
and
$$
\zeta^s (A) = \frac{\omega_s}{2^s}(\diam A)^s,
\quad
\text{where}
\quad
\omega_s = \frac{\pi^{s/2}}{\Gamma(\frac{s}{2}+1)}.
$$
Note that $\omega_n$ is the volume of the unit ball in $\bbbr^n$ so $\zeta^n(B^n(0,r))=\H^n(B^n(0,r))$.
Note also that $\zeta^0(A)=1$ if $A\neq\varnothing$ and $\zeta^0(\varnothing)=0$.

For $\delta\in (0,\infty]$, 
a covering $E\subset\bigcup_{i=1}^\infty A_i$ by bounded sets satisfying 
$\diam A_i \leq \delta$ for all $i \in \bbbn$, is called a {\em $\delta$-covering of $E$.} An \emph{open} (\emph{closed}) $\delta$-covering is one where every $A_i$ is open (closed).

\noindent
\textbf{Acknowledgement.} We would like to express our deepest gratitude to Fedor Nazarov for his kindness in providing us with 
an elementary proof of inequality \eqref{eq21}, through 
MathOverflow \cite{nazarov}.
We would also like to thank Mikhail Korobkov for discussions on topics related to Definition~\ref{D1}. Finally, the authors would like to thank the MathOverflow community for providing the reference to N\"obeling's paper \cite{MO2}.

\section{Preliminaries}
\label{PR}

\subsection{Upper Integral}
\label{UI}
Throughout Section~\ref{UI}, $(X,\mu)$ is a measure space.
\begin{definition}
For a function $f : X \to [0,\infty]$ defined $\mu$-a.e. on $X$, the \textit{upper integral} is
defined by
$$
\int^*_X f \ d\mu = \inf \, \int_X \phi \, d\mu \, ,
$$
where the infimum is taken over all \textit{$\mu$-measurable} functions $\phi$ satisfying $0 \leq f(x) \leq \phi(x)$ for $\mu$-a.e. $x \in X$. 
\end{definition}
We do not require $f$ to be measurable. Clearly, for measurable functions the upper integral coincides with the Lebesgue one.
Note also that 
\begin{equation}
\label{eq37}
\text{If $\int_X^*f\, d\mu=0$, then $f=0$, $\mu$-almost everywhere and hence $f$ is measurable.}
\end{equation}
\begin{lemma}
\label{T1}
Let $f_n: X \to [0,\infty]$ be a monotone sequence of (not necessarily measurable) functions, i.e. 
$0 \leq f_1(x) \leq f_2(x) \leq \ldots$ for $\mu$-a.e. $x \in X$. If $f(x):=\lim_{n \to \infty} f_n(x)$, then
\begin{equation}
\label{eq1}
\lim_{n\to \infty} \int_X^* f_n \, d\mu = \int_X^* f \, d\mu \, .
\end{equation}
\end{lemma}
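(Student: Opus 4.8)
The plan is to prove the two inequalities separately. The inequality $\lim_{n\to\infty}\int_X^* f_n\,d\mu \le \int_X^* f\,d\mu$ is immediate: since $f_n \le f$ $\mu$-a.e. for every $n$, any measurable $\phi$ that dominates $f$ $\mu$-a.e. also dominates each $f_n$ $\mu$-a.e., so $\int_X^* f_n\,d\mu \le \int_X^* f\,d\mu$ for all $n$, and the sequence of upper integrals is nondecreasing (again because $f_n\le f_{n+1}$ a.e.), hence its limit exists and is $\le \int_X^* f\,d\mu$.

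For the reverse inequality the key step is to choose, for each $n$, a $\mu$-measurable function $\phi_n$ with $f_n \le \phi_n$ $\mu$-a.e. and $\int_X \phi_n\,d\mu \le \int_X^* f_n\,d\mu + \eps 2^{-n}$ (or, if $\int_X^* f_n\,d\mu = \infty$ for some $n$, the statement is trivial since then $\int_X^* f\,d\mu \ge \int_X^* f_n\,d\mu = \infty$ too, matching the already-established first inequality; so assume all the upper integrals are finite). The obstacle is that the $\phi_n$ need not be monotone, so one cannot apply the ordinary monotone convergence theorem to them directly. The standard fix is to replace $\phi_n$ by $\psi_n := \inf_{k\ge n}\phi_k$, which is measurable, nondecreasing in $n$, and still satisfies $f_n \le \psi_n$ $\mu$-a.e.: indeed for $\mu$-a.e.\ $x$ and every $k\ge n$ we have $f_n(x)\le f_k(x)\le \phi_k(x)$, hence $f_n(x)\le \inf_{k\ge n}\phi_k(x)=\psi_n(x)$. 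Moreover $\psi_n \le \phi_n$, so $\int_X \psi_n\,d\mu \le \int_X^* f_n\,d\mu + \eps 2^{-n}$.

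Now set $\psi := \lim_{n\to\infty}\psi_n = \sup_n \psi_n$, a measurable function with $f = \lim_n f_n \le \psi$ $\mu$-a.e., so $\int_X^* f\,d\mu \le \int_X \psi\,d\mu$. By the classical monotone convergence theorem applied to the measurable nondecreasing sequence $(\psi_n)$, $\int_X \psi\,d\mu = \lim_{n\to\infty}\int_X \psi_n\,d\mu \le \lim_{n\to\infty}\bigl(\int_X^* f_n\,d\mu + \eps 2^{-n}\bigr) = \lim_{n\to\infty}\int_X^* f_n\,d\mu$, where the last equality uses that $\eps 2^{-n}\to 0$ and the limit of the upper integrals exists. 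Hence $\int_X^* f\,d\mu \le \lim_{n\to\infty}\int_X^* f_n\,d\mu$, which combined with the first inequality gives \eqref{eq1}. I expect the only mildly delicate point to be the bookkeeping of the "$\mu$-a.e." qualifiers — in particular ensuring that the countably many exceptional null sets (one for each pair $f_n\le f_{n+1}$, one for each $f_n\le\phi_n$) are combined into a single null set off which all inequalities hold simultaneously — but this is routine.
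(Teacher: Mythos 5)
Your proof is correct and follows essentially the same route as the paper: the paper also picks near-optimal measurable majorants $\phi_n$ of $f_n$ and then applies Fatou's lemma to them, which is exactly what your construction $\psi_n=\inf_{k\ge n}\phi_k$ together with the monotone convergence theorem reproduces by hand. The only cosmetic difference is that the paper cites Fatou directly (with error $2^{-n}$ rather than $\eps 2^{-n}$), while you unwind its proof.
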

\begin{proof}
Throughout the proof, inequalities between functions are assumed to hold $\mu$-a.e. 
Clearly the limit on the left hand side of \eqref{eq1} exists and
\begin{equation}
\label{eq2}
\lim_{n\to\infty} \int_X^* f_n\, d\mu\leq\int_X^* f\, d\mu.
\end{equation}
Choose measurable functions $\phi_n$ such that $0\leq f_n\leq\phi_n$ and
$$
\int_X\phi_n\, d\mu\leq \int_X^* f_n\, d\mu+2^{-n}.
$$
This and Fatou's lemma yield
$$
\int_X^* f\, d\mu =\int_X^* \lim_{n\to\infty} f_n\, d\mu
\leq
\int_X\liminf_{n\to\infty} \phi_n\, d\mu\leq
\liminf_{n\to\infty} \int_X \phi_n\, d\mu\leq
\lim_{n\to\infty} \int_X^*f_n\, d\mu,
$$
which together with \eqref{eq2} proves \eqref{eq1}.
\end{proof}

\begin{definition}
We say $\phi:X \to [0,\infty]$ is a \textit{step function} if it is $\mu$-measurable and attains at most countably many values (we allow infinite values). That is, $\phi$ is a step function if there exist  disjoint $\mu$-measurable subsets $A_i \subset X $ and $0 < a_i \leq \infty $ such that
\begin{equation}
\label{simple-function}
\phi(x) = \sum_{i=1}^\infty a_i \chi_{A_i}(x) \, .
\end{equation}
\end{definition}
\begin{lemma}
\label{T2}
Let $f:X \to [0,\infty]$ be any function. Then
$$ 
\int^*_X f \ d\mu = \inf \, \int_X \phi \, d\mu \, ,
$$
where the infimum is over all step functions $\phi$ satisfying
$0 \leq f(x) \leq \phi(x)$ for all $x\in X$.
\end{lemma}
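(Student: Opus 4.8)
The plan is to show that restricting the infimum in the definition of $\int_X^* f\,d\mu$ to step functions does not change its value. One inequality is immediate: every step function of the form \eqref{simple-function} is in particular a nonnegative $\mu$-measurable function with $0\le f\le\phi$ everywhere, so it is among the competitors in the original definition; hence the infimum over step functions is at least $\int_X^* f\,d\mu$. Thus it remains only to prove the reverse inequality, namely that for every $\mu$-measurable $\phi\ge f$ (a.e.) and every $\eps>0$ there is a step function $\psi\ge f$ everywhere with $\int_X\psi\,d\mu\le\int_X\phi\,d\mu+\eps$.

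First I would reduce to the case $0\le f\le\phi$ \emph{everywhere} rather than merely a.e.: let $N=\{x:f(x)>\phi(x)\}$, which is contained in a $\mu$-null set, and replace $\phi$ by $\phi+\infty\cdot\chi_N$; this does not change $\int_X\phi\,d\mu$ and now $f\le\phi$ pointwise. Next, given such a measurable $\phi$, I would approximate it from above by a step function. The standard device is a dyadic (or geometric) layer-cake: fix $\lambda>1$ and set $A_k=\{x: \lambda^{k}\le\phi(x)<\lambda^{k+1}\}$ for $k\in\bbbz$, together with $A_\infty=\{x:\phi(x)=\infty\}$ and $A_{-\infty}=\{x:\phi(x)=0\}$ (on which we put coefficient $0$, i.e.\ simply omit it). Define $\psi=\sum_{k}\lambda^{k+1}\chi_{A_k}+\infty\cdot\chi_{A_\infty}$. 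Then $\psi$ is a step function, $\phi\le\psi\le\lambda\phi$ pointwise on $\{0<\phi<\infty\}$ and $\psi=\phi$ where $\phi\in\{0,\infty\}$, so $f\le\phi\le\psi$ everywhere and $\int_X\psi\,d\mu\le\lambda\int_X\phi\,d\mu$. Letting $\lambda\to1^+$ gives, for any $\eta>0$, a step function $\psi$ with $\int_X\psi\,d\mu\le(1+\eta)\int_X\phi\,d\mu$.

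Taking the infimum over all admissible $\phi$ and then letting $\eta\to0^+$ yields $\inf_\psi\int_X\psi\,d\mu\le\int_X^* f\,d\mu$, which is the reverse inequality. The only point requiring a moment's care is the case $\int_X\phi\,d\mu=\infty$, where the multiplicative estimate $\int_X\psi\,d\mu\le(1+\eta)\int_X\phi\,d\mu$ is vacuous but harmless (the claimed inequality reads $\infty\le\infty$); and the case $\int_X^* f\,d\mu=\infty$, where there is nothing to prove. I do not expect any genuine obstacle here — the lemma is a routine measure-theoretic approximation — the only thing to be attentive about is handling the values $0$ and $\infty$ of $\phi$ so that the resulting $\psi$ genuinely has the form \eqref{simple-function} with coefficients $0<a_i\le\infty$, and that the pointwise inequality $f\le\psi$ holds everywhere (not just a.e.), which is why the preliminary step of absorbing the null set $N$ into $\phi$ is included.
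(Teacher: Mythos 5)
Your proof is correct and follows essentially the same route as the paper: the geometric layering $A_k=\{\lambda^{k}\le\phi<\lambda^{k+1}\}$ with $\lambda\to 1^+$ is precisely the paper's construction (the paper applies it to $f$ after first reducing to measurable $f$, which amounts to your step of approximating a measurable majorant $\phi$ from above). One small adjustment: the set $N=\{x: f(x)>\phi(x)\}$ need not itself be $\mu$-measurable, so you should instead add $\infty\cdot\chi_Z$ for a measurable null set $Z\supset N$ supplied by the a.e.\ hypothesis; otherwise the modified $\phi$ and its level sets $A_k$ need not be measurable, and this costs nothing since $\mu(Z)=0$.
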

\begin{proof}
Since the claim is true when $\int_X^* f \, d\mu = \infty$, we can assume that $\int_X^* f\,d\mu<\infty$.
We can also assume that $f$ is measurable since the general case will easily follow from the definition of the upper integral.
For $i\in\bbbz$ and $1<\lambda<\infty$ define
$$
A_\infty =\{x:\, f(x)=+\infty\},
\quad
\text{and}
\quad
A_i^\lambda=\{x:\, \lambda^i\leq f(x)<\lambda^{i+1}\}.
$$
Then 
$$
f\leq\phi_\lambda\leq\lambda f,
\quad
\text{where}
\quad
\phi_\lambda = \infty\cdot\chi_{A_\infty}+
\sum_{i\in\bbbz} \lambda^{i+1}\chi_{A_i^\lambda}
$$
and
$$
\int_X f\, d\mu\leq \int_X\phi_\lambda\, d\mu\leq \lambda\int_X f\, d\mu\to 
\int_X f\, d\mu
\quad
\text{as $\lambda\to 1^+$}
$$
complete the proof.
\end{proof}

\subsection{Covering lemma}
A familiar $5r$-covering lemma, known also as a Vitali type covering lemma, asserts that from any family $\mathcal{F}$ of balls with bounded radii in a metric space, we can select a subfamily $\mathcal{F}'$ of pairwise disjoint balls such that balls in $\mathcal{F}'$ dilated $5$ times, cover all balls in $\mathcal{F}$, see e.g. \cite[Theorem~3.3]{simon}. A close inspection of the proof reveals that we do not really use the fact that this is a family of balls since the proof is based on  simple estimates for diameters. Therefore, the lemma holds true for any family of uniformly bounded sets, provided we give a proper meaning of being dilated $5$ times. This gives (cf.\ \cite[Section~2.8]{federer})
\begin{lemma}
\label{T9}
Let $\mathcal{F}$ be a family of bounded sets in a metric space such that
$\sup\{\diam F:\, F\in\mathcal{F}\}<\infty$. Then, there is a subfamily $\mathcal{F}'\subset\mathcal{F}$ of pairwise disjoint sets such that
$$
\bigcup_{F\in\mathcal{F}} F\subset \bigcup_{F'\in \mathcal{F}'}\5 F',
$$
where
$$
\5 F'=\bigcup\{F\in\mathcal{F}:\, F\cap F'\neq\varnothing,\ \diam F\leq 2\diam F'\}.
$$
Moreover, if $F\in \mathcal{F}$, then there is $F'\in \mathcal{F}'$ such that $F\cap F'\neq\varnothing$ and $F\subset\5 F'$.
\end{lemma}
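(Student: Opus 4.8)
The strategy is the usual proof of the $5r$-covering lemma with every radius replaced by a diameter, so I will only indicate the bookkeeping. Put $R=\sup\{\diam F:\, F\in\mathcal{F}\}<\infty$ and discard the empty members of $\mathcal{F}$. For $j\in\bbbn$ let
$$
\mathcal{F}_j=\Big\{F\in\mathcal{F}:\ \frac{R}{2^j}<\diam F\le\frac{R}{2^{j-1}}\Big\},
$$
and let $Z=\{F\in\mathcal{F}:\ \diam F=0\}$, so that $\mathcal{F}=Z\cup\bigcup_{j\ge 1}\mathcal{F}_j$.

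I would construct $\mathcal{F}'$ by a greedy induction on $j$. By Zorn's lemma -- applied to the poset, ordered by inclusion, of pairwise disjoint subfamilies, in which the union of a chain is again a member -- choose $\mathcal{G}_1\subset\mathcal{F}_1$ maximal among pairwise disjoint subfamilies. Having chosen $\mathcal{G}_1,\dots,\mathcal{G}_{j-1}$, apply Zorn again to choose $\mathcal{G}_j$ maximal among pairwise disjoint subfamilies of
$$
\big\{F\in\mathcal{F}_j:\ F\cap F'=\varnothing\ \text{for all}\ F'\in\mathcal{G}_1\cup\dots\cup\mathcal{G}_{j-1}\big\}.
$$
Finally choose $\mathcal{G}_Z$ maximal among pairwise disjoint subfamilies of $\{F\in Z:\ F\cap F'=\varnothing\ \text{for all}\ F'\in\bigcup_{j\ge 1}\mathcal{G}_j\}$ and set $\mathcal{F}'=\mathcal{G}_Z\cup\bigcup_{j\ge 1}\mathcal{G}_j$. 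By construction $\mathcal{F}'$ is pairwise disjoint, since distinct $\mathcal{G}$'s were required to be mutually disjoint.

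The heart of the argument is the ``moreover'' statement, and the covering inclusion is then immediate. Fix $F\in\mathcal{F}$. If $\diam F>0$, say $F\in\mathcal{F}_j$, then $F$ meets some $F'\in\mathcal{G}_1\cup\dots\cup\mathcal{G}_j$: either $F$ meets an earlier-chosen set directly, or, failing that, $F$ is a legitimate candidate for $\mathcal{G}_j$ and maximality of $\mathcal{G}_j$ forces $F$ to meet a member of $\mathcal{G}_j$. Such an $F'$ lies in some $\mathcal{F}_k$ with $k\le j$, so $\diam F'>R/2^k\ge R/2^j$, whence $\diam F\le R/2^{j-1}<2\diam F'$. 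If $\diam F=0$, then either $F$ meets some $F'\in\bigcup_j\mathcal{G}_j$ (and trivially $\diam F=0\le 2\diam F'$), or maximality of $\mathcal{G}_Z$ forces $F$ to meet some $F'\in\mathcal{G}_Z$ (again $\diam F\le 2\diam F'$). In every case we have produced $F'\in\mathcal{F}'$ with $F\cap F'\ne\varnothing$ and $\diam F\le 2\diam F'$, so $F\subset\5 F'$ directly from the definition of $\5 F'$; taking the union over $F\in\mathcal{F}$ gives $\bigcup_{F\in\mathcal{F}}F\subset\bigcup_{F'\in\mathcal{F}'}\5 F'$.

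I expect no real obstacle: the only thing requiring care is the dyadic bookkeeping guaranteeing that a set selected at stage $j$ is, up to the factor $2$, at least as large as any set it can block at a later stage -- precisely the place where one ``inspects'' the classical proof -- together with the harmless separate treatment of the zero-diameter members. No metric property of balls beyond the comparison $\diam F\le 2\diam F'$ enters, which is exactly why the statement persists for arbitrary uniformly bounded families.
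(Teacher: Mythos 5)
Your proof is correct and follows essentially the same route as the paper's: a dyadic decomposition by diameter, inductively chosen maximal pairwise disjoint subfamilies, and the observation that a set in $\mathcal{F}_j$ can only be blocked by a previously or currently selected set of comparable or larger diameter, giving $\diam F\le 2\diam F'$. Your treatment of the zero-diameter members via a final maximal disjoint selection is a harmless variant of the paper's step of adjoining the uncovered singletons at the end.
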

\begin{remark}
That is $\5 F'$ is the union of $F'$ and all sets that intersect it and have relative small diameter. 
Clearly $\diam \5 F'\leq 5\diam F'$. 
\end{remark}
\begin{proof}
Let $\sup\{\diam F:\, F\in\mathcal{F}\}=R<\infty$ and let
$$
\mathcal{F}_j=\left\{F\in\mathcal{F}:\, \frac{R}{2^j}<\diam F\leq \frac{R}{2^{j-1}}\right\} \, .
$$
So, $ \bigcup_{j=1}^\infty \mathcal{F}_j $ includes all of $ \mathcal{F} $ except possibly for some singletons -- sets of diameter zero.

We define $\mathcal{F}_1'\subset\mathcal{F}_1$ to be a maximal family of pairwise disjoint sets in $\mathcal{F}_1$. Suppose that the families $\mathcal{F}_1',\ldots,\mathcal{F}_{j-1}'$ have already been defined. Then we define $\mathcal{F}_j'$ to be a maximal family of pairwise disjoint sets in
$$
\{F \in \mathcal{F}_j: F\cap F'=\varnothing
\text{ for all $F'\in \mathcal{F}_1'\cup\ldots\cup\mathcal{F}'_{j-1}$}\}\, .
$$
Set $ \mathcal{F}'=\bigcup_{j=1}^\infty \mathcal{F}_j'.$
Every set $F\in\mathcal{F}_j$ intersects with a set $F'\in\bigcup_{i=1}^j \mathcal{F}_i'$;
it follows that $\diam F\leq 2\diam F'$ and hence
$F\subset \5 F'$.

If there are any singletons $F=\{x\} \in \mathcal{F}$ such that $x \notin \bigcup_{F'\in \mathcal{F}'}F'$, then add $F$ to the collection $\mathcal{F}'$. The updated $\mathcal{F}'$ will remain disjointed and now it satisfies the claim of the lemma.
\end{proof}

\begin{definition}
Let $\mathcal{F}$ be a family of sets in a metric space $X$. We say that the family $\mathcal{F}$ is a {\em fine covering} of a set $A\subset X$ if for every $x\in A$ and every $\eps>0$, there is $F\in\mathcal{F}$ such that $x\in F\subset B(x,\eps)$. 

\end{definition}
\begin{corollary}
\label{T8}
If $\mathcal{F}$ is a family of {\em closed} sets that forms a fine covering of $A\subset X$,
$\sup\{\diam F:\, F\in\mathcal{F}\}<\infty$, and $\mathcal{F}'$ is as in Lemma~\ref{T9}, then for any finite collection of sets $F_1',\ldots,F_N'\in\mathcal{F}'$ we have
\begin{equation}
\label{eq13}
A\subset\bigcup_{j=1}^N F_j'\cup\bigcup_{F'\in \mathcal{F}'\setminus \{F_1',\ldots,F_N'\}} \5 F'
\end{equation}
\end{corollary}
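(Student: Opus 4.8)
The plan is to prove the containment \eqref{eq13} pointwise. Fix $x\in A$. If $x\in F_j'$ for some $j\in\{1,\dots,N\}$, there is nothing to prove, so assume $x\notin\bigcup_{j=1}^N F_j'$. The key observation is that each $F_j'\in\mathcal{F}'$ is closed — the members of $\mathcal{F}$ are closed by hypothesis, and the possible singletons adjoined to $\mathcal{F}'$ in the proof of Lemma~\ref{T9} are closed as well — so the finite union $\bigcup_{j=1}^N F_j'$ is closed. Hence $\eps:=\dist\bigl(x,\bigcup_{j=1}^N F_j'\bigr)>0$.

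Next I would use the fine covering hypothesis at the point $x$ with this $\eps$: there is a set $F\in\mathcal{F}$ with $x\in F\subset B(x,\eps)$. Every point of $F$ lies at distance $<\eps$ from $x$, while every point of each $F_j'$ lies at distance $\geq\eps$ from $x$ by the definition of $\eps$; therefore $F\cap F_j'=\varnothing$ for every $j\in\{1,\dots,N\}$.

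Finally, apply the last assertion of Lemma~\ref{T9} to this particular $F$: there exists $F'\in\mathcal{F}'$ with $F\cap F'\neq\varnothing$ and $F\subset\5 F'$. Since $F$ meets $F'$ but is disjoint from each of $F_1',\dots,F_N'$, we conclude $F'\notin\{F_1',\dots,F_N'\}$, i.e.\ $F'\in\mathcal{F}'\setminus\{F_1',\dots,F_N'\}$. Then $x\in F\subset\5 F'$, which places $x$ in the right-hand side of \eqref{eq13}. As $x\in A$ was arbitrary, \eqref{eq13} follows.

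There is essentially no difficult step; the one point that deserves care is that closedness of the $F_j'$ is genuinely used — it is exactly what guarantees the positive gap $\eps$, and without it a set $F$ from the fine cover could not be forced to avoid all of $F_1',\dots,F_N'$. Everything else is a direct combination of the fine covering property with the "moreover" part of Lemma~\ref{T9}.
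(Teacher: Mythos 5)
Your proof is correct and follows essentially the same route as the paper: closedness of the $F_j'$ gives a ball $B(x,\eps)$ around $x\in A\setminus\bigcup_{j=1}^N F_j'$ missing all the $F_j'$, the fine covering supplies $F\in\mathcal{F}$ with $x\in F\subset B(x,\eps)$, and the ``moreover'' part of Lemma~\ref{T9} produces an $F'\in\mathcal{F}'$ meeting $F$ (hence distinct from every $F_j'$) with $x\in F\subset\5 F'$. No gaps; your phrasing via $\eps=\dist\bigl(x,\bigcup_{j=1}^N F_j'\bigr)>0$ is just a slightly more explicit version of the paper's argument.
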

\begin{proof}
If $x\in A\setminus\bigcup_{j=1}^N F_j'$, since the sets $F_j'$ are closed, a ball $B(x,\eps)$ is disjoint with the sets $F_j'$. If $x\in F\subset B(x,\eps)$, $F\in\mathcal{F}$, then there is $F'\in\mathcal{F}'$ such that $F\cap F'\neq\varnothing$ and $x\in F\subset\5 F'$.
Since $F\subset B(x,\eps)$ and $B(x,\eps)\cap F_j'=\varnothing$, $F'\neq F_j'$ and hence 
$F'$ is one of the sets on the right hand side of \eqref{eq13}.

\end{proof}

\subsection{Hausdorff Measures}
\label{HM}
Let $(X,d)$ be a metric space. Fix an $0 \leq s < \infty $. For a subset $E$ of $X$ and a $\delta \in (0,\infty]$, the {\em Hausdorff contents} $\H^s_\delta$ and $\mathscr{H}^s_\delta$ are defined by
$$
\H^s_\delta (E)
= \inf \sum_{i=1}^\infty \zeta^s (A_i), 
\quad
\text{and}
\quad
\mathscr{H}^s_\delta (E)
= \inf \sum_{i=1}^\infty \zeta^s (U_i)
$$
where the infima are taken, respectively,  over all countable coverings $E\subset\bigcup_{i=1}^\infty A_i$ by bounded sets with 
$\diam A_i \leq \delta$ for all $i \in \bbbn$, and over all countable coverings $E\subset\bigcup_{i=1}^\infty U_i$  by \emph{open} sets with $\diam U_i\leq\delta$ for all $i \in \bbbn$, in other words, over all $\delta$-coverings and over all \textit{open} $\delta$-coverings.
If no such covering(s) exists, we set the corresponding content equal to $+\infty$. 

Note that we can always assume that the sets $A_i$ are closed since taking the closure of a set does not increase its diameter. Note also that for any $0<\eps<\delta < \infty$
$$
\H_\delta^s(E)\leq\mathscr{H}_\delta^s(E)\leq\H^s_{\delta-\eps}(E),
$$
because any $(\delta-\eps)$-covering can be enlarged to an open $\delta$-covering with an arbitrarily small increase in diameters of the sets.

The functions $\delta\mapsto\H^s_\delta(E)$ and $\delta\mapsto\mathscr{H}^s_\delta(E)$ are non-increasing, hence for $0\leq s<\infty$
$$
\H^s(E) := \lim_{\delta \to 0^+} \H^s_\delta (E) = \sup_{\delta > 0} \H^s_\delta (E)= 
\lim_{\delta \to 0^+} \mathscr{H}^s_\delta (E) = \sup_{\delta > 0} \mathscr{H}^s_\delta (E)\, ,
$$
is well-defined.  This is the \textit{$s$-dimensional Hausdorff measure} on $X$.

Note that $\H^0$ is the {\em counting measure}, i.e. $\H^0(E)$ equals the number of elements of $E$.

The Hausdorff measure is an outer measure defined on all subsets of $X$ and all Borel sets are $\H^s$-measurable.
\begin{remark}
If $n \in \bbbn$, then $\omega_n$ equals the volume of the unit ball in $\bbbr^n$. With this choice of the normalizing coefficient,
$\H^n=\H^n_\infty=\mathcal{L}^n$ in $\bbbr^n$, where $\mathcal{L}^n$ is the outer Lebesgue measure,
see \cite[Theorem~2.6]{simon}.
However, we will not use this fact in what follows. 
\end{remark}
The next result proves that the Hausdorff measure is Borel-regular.
\begin{lemma}
\label{T33}
For $s\in [0,\infty)$ and
every $E\subset X$ there is a 
decreasing sequence of open sets $V_1\supset V_2\supset\ldots\supset E$ such that
$E\subset\tilde{E}:=\bigcap_{i=1}^\infty V_i$ and $\H^s(E)=\H^s(\tilde{E})$. 
\end{lemma}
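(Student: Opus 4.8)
The plan is to exploit the fact that each Hausdorff content $\H^s_\delta$ is defined by an infimum over $\delta$-coverings, and to combine the optimal coverings at a sequence $\delta_k \to 0^+$ into a countable intersection of open sets. First I would fix $E \subset X$ and handle the trivial case $\H^s(E) = \infty$ separately: there one simply takes $V_i = X$ for all $i$ (or, if $X$ is not open in itself — it always is — so this is fine), and $\tilde E = X \supset E$ with $\H^s(\tilde E) = \H^s(E) = \infty$. Actually, to keep $\tilde E \supset E$ with equal measure even when $X$ itself has finite measure, it is cleanest to observe that $\H^s(E) = \infty$ forces $\H^s$ of any set containing $E$ to be $\infty$ as well, so any decreasing sequence of open supersets of $E$ works. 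So assume from now on $\H^s(E) < \infty$.

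Next, using the remark in the excerpt that $\H^s_\delta(E) \le \mathscr{H}^s_\delta(E) \le \H^s_{\delta-\eps}(E)$ and that $\H^s(E) = \lim_{\delta\to 0^+}\mathscr{H}^s_\delta(E)$, I would for each $k \in \bbbn$ choose an open $(1/k)$-covering $E \subset \bigcup_{i} U_i^{(k)}$ with $\sum_i \zeta^s(U_i^{(k)}) \le \mathscr{H}^s_{1/k}(E) + 1/k \le \H^s(E) + 1/k$. Set $W_k := \bigcup_i U_i^{(k)}$, an open set containing $E$, and then $V_m := \bigcap_{k=1}^m W_k$, which is open (finite intersection), decreasing in $m$, and contains $E$. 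Put $\tilde E := \bigcap_{m=1}^\infty V_m = \bigcap_{k=1}^\infty W_k \supset E$. The collection $\{U_i^{(k)}\}_i$ is then an open $(1/k)$-covering of $\tilde E$ as well (since $\tilde E \subset W_k$), so $\mathscr{H}^s_{1/k}(\tilde E) \le \sum_i \zeta^s(U_i^{(k)}) \le \H^s(E) + 1/k$. Letting $k \to \infty$ gives $\H^s(\tilde E) = \lim_k \mathscr{H}^s_{1/k}(\tilde E) \le \H^s(E)$. The reverse inequality $\H^s(E) \le \H^s(\tilde E)$ is immediate from monotonicity of the outer measure since $E \subset \tilde E$. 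Hence $\H^s(E) = \H^s(\tilde E)$, and the decreasing open sets $V_1 \supset V_2 \supset \cdots \supset E$ are as required.

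I do not anticipate a serious obstacle here; the argument is the standard construction of a Borel-regular hull, and every ingredient — the comparison of $\H^s_\delta$, $\mathscr{H}^s_\delta$, and $\H^s_{\delta-\eps}$, the characterization of $\H^s$ as a supremum/limit over $\delta$, and the existence of near-optimal open $\delta$-coverings when $\H^s(E) < \infty$ — is already available in the excerpt. The only point requiring a little care is to make sure the sets $V_m$ are genuinely open and decreasing (taking finite rather than countable intersections of the $W_k$), and to note that whether one works with the infimum over all $\delta$-coverings ($\H^s_\delta$) or over open ones ($\mathscr{H}^s_\delta$) is immaterial since the two generate the same measure $\H^s$; I would phrase the whole argument in terms of $\mathscr{H}^s_\delta$ so that openness of the covering sets is automatic.
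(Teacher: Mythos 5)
Your proposal is correct and follows essentially the same route as the paper's proof: near-optimal open $(1/k)$-coverings of $E$, their unions $W_k$, the finite intersections $V_m=\bigcap_{k\le m}W_k$, and the observation that each open $(1/k)$-covering of $E$ also covers $\tilde E$, giving $\mathscr{H}^s_{1/k}(\tilde E)\le\H^s(E)+1/k$. The treatment of the case $\H^s(E)=\infty$ and the use of monotonicity for the reverse inequality also match the paper, so there is nothing to add.
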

\begin{proof}
If $\H^s(E)=\infty$ then we can take $V_i=X$, for all $i \in \bbbn$. So, assume $\H^s(E) < \infty$. 
For each $i\in \bbbn$ there is a $1/i$-covering $E\subset\bigcup_{j=1}^\infty U_{ij}:=U_i$ by open sets, such that
$$
\sum_{j=1}^\infty \zeta^s(U_{ij})\leq \mathscr{H}^s_{1/i}(E)+\frac{1}{i}
\quad
\text{so}
\quad
\mathscr{H}_{1/i}^s(U_i)\leq \H^s(E)+\frac{1}{i} \, .
$$
Let $V_i=\bigcap_{k=1}^i U_k$,
then $\tilde{E}=\bigcap_{i=1}^\infty U_i=\bigcap_{i=1}^\infty V_i$ has the required properties.
\end{proof}
As an immediate consequence we get 
\begin{lemma}
\label{T10}
If $0 \leq s < \infty $, $\H^s(X)<\infty$ and $E\subset X$ is any set, then
\begin{equation}
\label{eq15}
\H^s(E)=\inf\{\H^s(U):\, \text{$ U \supset E$, $U$ is open}\}.
\end{equation}
\end{lemma}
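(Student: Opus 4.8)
The plan is to deduce this directly from the Borel-regularity statement proved in Lemma~\ref{T33}, which I may use since it appears earlier in the excerpt. The point is that Lemma~\ref{T33} already produces, for the set $X$ itself (or for any set), a $G_\delta$-type overset whose Hausdorff measure equals that of the original set; here I want the analogous statement but with the approximating sets being \emph{open} rather than merely $G_\delta$, and this is where the hypothesis $\H^s(X)<\infty$ is used in an essential way.

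First I would fix $E\subset X$ and apply Lemma~\ref{T33} to obtain a decreasing sequence of open sets $V_1\supset V_2\supset\cdots\supset E$ with $\H^s\bigl(\bigcap_i V_i\bigr)=\H^s(E)$. Since each $V_i$ is open and contains $E$, we trivially have $\inf\{\H^s(U):U\supset E,\ U\text{ open}\}\le\H^s(V_i)$ for every $i$, while the reverse inequality $\H^s(E)\le\H^s(U)$ holds for every open $U\supset E$ by monotonicity of the outer measure. So it suffices to show $\lim_{i\to\infty}\H^s(V_i)=\H^s\bigl(\bigcap_i V_i\bigr)$, i.e.\ that $\H^s$ is continuous from above along this decreasing sequence of open (hence $\H^s$-measurable) sets.

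The key step is precisely this continuity from above. For a general outer measure continuity from above can fail, but here we have two favorable circumstances: the sets $V_i$ are all $\H^s$-measurable (being open, and open sets are Borel, hence $\H^s$-measurable as noted in the excerpt), and $\H^s(V_1)\le\H^s(X)<\infty$. For a measure (the restriction of $\H^s$ to the $\sigma$-algebra of measurable sets is a genuine measure by Carathéodory's theorem) a decreasing sequence of measurable sets of finite measure satisfies $\mu\bigl(\bigcap_i V_i\bigr)=\lim_i\mu(V_i)$; this is the standard argument writing $V_1\setminus\bigcap_i V_i=\bigcup_i(V_1\setminus V_i)$ as an increasing union and using countable additivity on the disjointified pieces $V_i\setminus V_{i+1}$, together with $\H^s(V_1)<\infty$ to subtract. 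Hence $\lim_i\H^s(V_i)=\H^s(\tilde E)=\H^s(E)$, and combined with the two trivial inequalities above this yields \eqref{eq15}.

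The only real subtlety — the "main obstacle" such as it is — is the finiteness hypothesis: without $\H^s(X)<\infty$ one cannot subtract in the continuity-from-above argument, and indeed the conclusion can fail (e.g.\ for $\H^0$ on an infinite set, a non-closed set like $\{1/n:n\ge 1\}$ has every open neighborhood of infinite $\H^0$-measure). Everything else is routine: the construction of the open approximants is already done in Lemma~\ref{T33}, and the remaining measure-theoretic input is the classical continuity-from-above for finite measures applied to the Carathéodory measurable sets, which here include all open sets.
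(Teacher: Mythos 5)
Your argument is correct and is essentially the paper's intended proof: the paper states Lemma~\ref{T10} as an immediate consequence of Lemma~\ref{T33}, and the step you supply — continuity from above of the finite measure $\H^s$ along the decreasing sequence of open (hence measurable) sets $V_i$, using $\H^s(V_1)\leq\H^s(X)<\infty$, plus monotonicity for the reverse inequality — is exactly the intended filling-in. One small quibble with your closing side remark: for $E=\{1/n:n\geq 1\}$ and $\H^0$ both sides of \eqref{eq15} are infinite, so that example does not actually exhibit failure without the finiteness hypothesis; a finite set such as $E=\{0\}\subset\bbbr$ does.
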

The next result is slightly less obvious
\begin{lemma}
\label{T36}
Let $E\subset X$ be any $\H^s$-measurable set, $0 \leq s<\infty$. If $\H^s(E)<\infty$ then
$$
\H^s(E)=\sup\{\H^s(C):\, \text{$C\subset E$, $C$ is closed}\}.
$$
\end{lemma}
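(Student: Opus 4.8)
The plan is to reduce the inner-regularity statement for $\H^s$-measurable sets to the outer-regularity statement of Lemma~\ref{T10}, applied to a complementary set inside a suitable ambient set of finite measure. First I would fix an $\H^s$-measurable set $E$ with $\H^s(E)<\infty$ and, using Lemma~\ref{T33}, choose a $\tilde E\supset E$ with $\tilde E=\bigcap_i V_i$ a countable intersection of open sets (hence Borel, hence $\H^s$-measurable) and $\H^s(\tilde E)=\H^s(E)<\infty$. Since $E$ and $\tilde E$ are both $\H^s$-measurable with equal finite measure, the difference $D:=\tilde E\setminus E$ is $\H^s$-measurable with $\H^s(D)=0$; in particular $\H^s(D)<\infty$, so I would like to apply outer regularity to $D$ \emph{within} the metric space $\tilde E$ (with the restricted metric), noting that $\H^s$ computed in the subspace $\tilde E$ agrees with $\H^s$ computed in $X$ for subsets of $\tilde E$, because diameters do not change and a covering by sets of small diameter can always be intersected with $\tilde E$.

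Next I would apply Lemma~\ref{T10} in the metric space $\tilde E$: since $\H^s(\tilde E)<\infty$, for every $\eta>0$ there is a set $W$, \emph{relatively open in} $\tilde E$, with $D\subset W$ and $\H^s(W)<\eta$. Then $C':=\tilde E\setminus W$ is relatively closed in $\tilde E$, and $C'\subset\tilde E\setminus D=E$, with
$$
\H^s(E)=\H^s(\tilde E)=\H^s(C')+\H^s(W\cap\tilde E)\le \H^s(C')+\eta,
$$
using $\H^s$-measurability of $C'$ (it is Borel in the subspace, as $\tilde E$ is Borel-nice enough, or simply because $W$ is a relatively open, hence $\H^s$-measurable, subset of the $\H^s$-measurable set $\tilde E$). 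Thus $\sup\{\H^s(C'):C'\subset E,\ C'\text{ relatively closed in }\tilde E\}=\H^s(E)$. Finally, to upgrade ``relatively closed in $\tilde E$'' to ``closed in $X$'', I would pass to the closure: set $C:=\overline{C'}$ taken in $X$. We have $C'\subset C$ so $\H^s(C)\ge\H^s(C')$, but $C$ need not be contained in $E$, which is exactly the point where care is needed.

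The main obstacle is precisely this last step: the closure in $X$ of a relatively closed subset of $\tilde E$ can stick out of $E$. To handle it I would not take the full closure but instead observe that $C'$ is closed in $\tilde E=\bigcap_i V_i$, so $C'=F\cap\tilde E$ for some set $F$ closed in $X$; replacing $F$ by $\bar{C'}$ (closure in $X$) we still have $\bar{C'}\cap\tilde E=C'$ because $C'$ is closed in $\tilde E$. Now $\bar{C'}$ is closed in $X$, but I need a \emph{closed} subset of $E$ with nearly full measure. The fix is to intersect with finitely many of the $V_i$: since $\bar{C'}\setminus E\subset \bar{C'}\setminus C'\subset \bar{C'}\setminus\tilde E=\bigcup_i(\bar{C'}\setminus V_i)$ and the sets $\bar{C'}\setminus V_i$ increase to $\bar{C'}\setminus\tilde E$, and $\H^s(\bar{C'})\le\H^s(V_1)<\infty$ so $\H^s$ is continuous from below on this increasing sequence, we get $\H^s(\bar{C'}\cap\overline{V_k})\to\H^s(\bar{C'}\cap\tilde E)=\H^s(C')$ as $k\to\infty$ — but this still does not land inside $E$. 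The cleanest resolution, which I would adopt, is to avoid closures in $X$ altogether by \emph{shrinking $\tilde E$ first}: apply Lemma~\ref{T33} to get $\tilde E$ as above, then note $E$ itself, being $\H^s$-measurable with $\H^s(E\triangle\tilde E)=0$, satisfies: the collection of closed subsets of $\tilde E$ contained in $E$ has supremum $\H^s(E)$ by the argument of the previous paragraph \emph{applied in the space $\tilde E$ and then pushed forward using that $C'$ closed in $\tilde E$ is the trace of a closed set of $X$ on the $G_\delta$ set $\tilde E$}; intersecting that closed-in-$X$ set with a large finite subintersection $V_1\cap\dots\cap V_k$ gives a set that is closed in $X$, contained in $\tilde E$ up to a $G_\delta$-tail of measure zero, and whose intersection with $E$ has measure $\ge\H^s(E)-2\eta$. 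I expect the write-up to require one careful lemma-level observation that $\H^s$ restricted to a subspace equals the ambient $\H^s$ on subsets of that subspace, and one application of continuity from below for $\H^s$ (legitimate here since all sets involved have finite $\H^s$), and otherwise to be routine.
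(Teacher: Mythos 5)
Your reduction to outer regularity inside the subspace $\tilde E$ does produce a set $C'\subset E$ with $\H^s(C')\ge\H^s(E)-\eta$ that is \emph{relatively} closed in $\tilde E$, but the lemma asks for a set closed in $X$, and this is exactly where your argument has a genuine gap. The fix you finally adopt --- intersect the closed set $\bar{C'}$ with a finite subintersection $V_1\cap\dots\cap V_k$ --- does not work: $V_1\cap\dots\cap V_k$ is \emph{open}, so $\bar{C'}\cap V_1\cap\dots\cap V_k$ is in general neither closed in $X$ nor contained in $E$; and ``its intersection with $E$ has measure $\ge\H^s(E)-2\eta$'' is not the required conclusion, since that intersection need not be closed. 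Your earlier variant with $\bar{C'}\cap\overline{V_k}$ fails for the reason you yourself note (it does not land inside $E$), and in addition the bound $\H^s(\bar{C'})\le\H^s(V_1)<\infty$ is unjustified: Lemma~\ref{T33} only controls the contents $\mathscr{H}^s_{1/i}$ of the sets $V_i$, not their measures, so $\H^s(V_1)$ may be infinite. In short, the proposal never actually exhibits a closed-in-$X$ subset of $E$ of nearly full measure.

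The missing ingredient is the one the paper's proof is built on: in a metric space every open set is an $F_\sigma$ set, i.e.\ an increasing union of closed sets. Using this and continuity from below (legitimate since $E$ is measurable with $\H^s(E)<\infty$), one chooses closed sets $F_i\subset V_i$ with $\H^s(E\setminus F_i)<\eps\,2^{-i}$; then $F=\bigcap_i F_i$ is closed in $X$, contained in $\tilde E$, and $\H^s(E\setminus F)<\eps$. Since $\H^s(F\setminus E)\le\H^s(\tilde E\setminus E)=0$, a second application of Lemma~\ref{T33} gives a null $G_\delta$ set $G\supset F\setminus E$, and $F\setminus G$ is an $F_\sigma$ subset of $E$ with measure $>\H^s(E)-\eps$; closed sets then exhaust this $F_\sigma$ set from inside by continuity from below. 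The same device would also rescue your own construction: writing $C'=\bar{C'}\cap\bigcap_i V_i$ and replacing each $V_i$ by a closed $F_i\subset V_i$ with $\H^s(C'\setminus F_i)<\eps\,2^{-i}$ yields the set $\bar{C'}\cap\bigcap_i F_i$, which is closed in $X$, contained in $C'\subset E$, and of measure $\ge\H^s(C')-\eps$. Either way, the $F_\sigma$ structure of open sets and the countable intersection of the resulting closed sets is the step your write-up lacks, and without it the argument does not close.
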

\begin{proof}
It is enough to prove that for any $\eps>0$ there exists an $F_\sigma$-set contained in $E$ with $\H^s$-measure larger than $\H^s(E)-\eps$.

Fix $\eps > 0$. Let $\tilde{E}=\bigcap_{i=1}^\infty V_i$, $\H^s(\tilde{E})=\H^s(E)$ be the $G_\delta$ set from Lemma~\ref{T33}. Since $E$ is measurable and has finite measure, $\H^s(\tilde{E}\setminus E)=0$. Each of the open sets $V_i$ is a union of an increasing sequence of closed sets. Since $E$ is contained in that union, there is a closed set $F_i\subset V_i$ such that
$\H^s(E\setminus F_i)<\eps/2^{i}$ and hence the closed set
$F=\bigcap_{i=1}^\infty F_i \subset \bigcap_{i=1}^\infty V_i = \tilde{E}$ satisfies
$$
\H^s(E\setminus F)=\H^s\Big(\bigcup_{i=1}^\infty (E\setminus F_i)\Big)<{\eps}.
$$
Since $\H^s(F\setminus E) \leq \H^s(\tilde{E}\setminus E)=0$, by Lemma~\ref{T33}, there exits a $G_\delta$-set $G$ such that $F\setminus E\subset G$ and $\H^s(G)=0$. Now $F \setminus G$ is an $F_\sigma$-set contained in $E$ and 
$$
\H^s(F \setminus G) = \H^s(F)\geq \H^s(E) - \H^s(E\setminus F) > \H^s(E) - \eps   \, .
$$
\end{proof}
\begin{lemma}
\label{T34}
If $s\in [0,\infty)$ and $A_1\subset A_2\subset\ldots$ is an increasing sequence of (not necessarily measurable) sets, then
\begin{equation}
\label{eq38}
\H^s\Big(\bigcup_{i=1}^\infty A_i\Big)=\lim_{i\to\infty} \H^s(A_i).
\end{equation}
\end{lemma}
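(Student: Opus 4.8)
The plan is to establish the inequality
$\H^s\big(\bigcup_{i} A_i\big)\le \lim_{i\to\infty}\H^s(A_i)$, since the reverse inequality is immediate: $\H^s$ is monotone and $A_i\subset\bigcup_i A_i$ for every $i$, so $\lim_{i\to\infty}\H^s(A_i)\le\H^s\big(\bigcup_i A_i\big)$. We may assume $L:=\lim_{i\to\infty}\H^s(A_i)<\infty$, for otherwise there is nothing to prove; in particular each $\H^s(A_i)$ is finite.

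The idea is to reduce to continuity from below of $\H^s$ on its $\sigma$-algebra of measurable sets, which is available because the restriction of the outer measure $\H^s$ to the $\H^s$-measurable sets (which include all Borel sets) is a countably additive measure, and any measure is continuous from below. To apply this we replace the possibly non-measurable sets $A_i$ by an increasing sequence of Borel sets of the same measure. First, by Lemma~\ref{T33}, for each $i$ choose a Borel (indeed $G_\delta$) set $\tilde A_i\supset A_i$ with $\H^s(\tilde A_i)=\H^s(A_i)$. These need not be nested, so we set
$$
B_i=\bigcap_{k\ge i}\tilde A_k .
$$
Each $B_i$ is Borel, hence $\H^s$-measurable, and $B_1\subset B_2\subset\cdots$. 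Moreover $A_i\subset B_i$, because $A_i\subset A_k\subset\tilde A_k$ for every $k\ge i$, while $B_i\subset\tilde A_i$; by monotonicity of $\H^s$ this yields $\H^s(A_i)\le\H^s(B_i)\le\H^s(\tilde A_i)=\H^s(A_i)$, so $\H^s(B_i)=\H^s(A_i)$.

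Finally, applying continuity from below of $\H^s$ to the increasing sequence of measurable sets $B_i$, and using $\bigcup_i A_i\subset\bigcup_i B_i$ together with monotonicity, we conclude
$$
\H^s\Big(\bigcup_{i=1}^\infty A_i\Big)\le\H^s\Big(\bigcup_{i=1}^\infty B_i\Big)=\lim_{i\to\infty}\H^s(B_i)=\lim_{i\to\infty}\H^s(A_i),
$$
which finishes the proof. The only subtle point is arranging the approximating Borel sets to be nested, which the intersection over all later indices handles; no analogue of Davies' Increasing Sets Lemma (Theorem~\ref{T19}) is needed here, since we are working with the full Hausdorff measure $\H^s$ rather than with a fixed content $\H^s_\delta$.
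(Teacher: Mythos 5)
Your proof is correct and follows essentially the same route as the paper: take Borel hulls $\tilde A_i\supset A_i$ of equal measure (Lemma~\ref{T33}), make them nested by intersecting over all later indices, and then invoke continuity from below of $\H^s$ on measurable sets together with monotonicity. The only differences are cosmetic (the explicit reduction to $L<\infty$ and the sandwich argument for $\H^s(B_i)=\H^s(A_i)$, which the paper states more tersely).
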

\begin{proof}
It suffices to prove that the right hand side of \eqref{eq38} is greater than or equal to the left hand side; the opposite inequality is obvious. Let $\hat{A}_i$ be a Borel set such that $A_i\subset\hat{A}_i$, and  $\H^s(A_i)=\H^s(\hat{A}_i)$. Let 
$\tilde{A}_i=\bigcap_{j=i}^\infty\hat{A}_i$. Then $\tilde{A}_i$ is Borel, $A_i\subset\tilde{A}_i$ and $\H^s(A_i)=\H^s(\tilde{A}_i)$. Since $\tilde{A}_1\subset\tilde{A}_2\subset\ldots$ are measurable, we have
$$
\H^s\Big(\bigcup_{i=1}^\infty A_i\Big)\leq\H^s\Big(\bigcup_{i=1}^\infty\tilde{A}_i\Big)=
\lim_{i\to\infty} \H^s(\tilde{A}_i)=\lim_{i\to\infty} \H^s(A_i).
$$
\end{proof}

If a set $F$ is bounded, then $\H^s_\infty(F)\leq\zeta^s(F)$ is an obvious estimate. However, in general we may expect that $\H^s(F)$ is much larger than $\zeta^s(F)$. Indeed, sets with small diameters may have arbitrarily large Hausdorff measure. There is no need to convince the reader that life would be much easier if we could estimate $\H^s(F)$ in terms of the diameter, say $\H^s(F)\leq (1+\eps)\zeta^s(F)$ for some small $\eps$. The next result shows that in fact, in spaces of finite measure, at {\em almost all} locations and {\em all} small scales this estimate is true.
\begin{lemma}
\label{T6}
Let $0\leq s<\infty$ and $\eps>0$. If $\H^s(X)<\infty$, then there is a set $E\subset X$ of measure zero, $\H^s(E)=0$, such that
\begin{equation}
\label{eq14}
\forall\, x\in X\setminus E\ \  
\exists\ \delta_x>0\ \ 
\forall\ F\subset X\ \ \ 
\left(x\in F\subset \bar{B}(x,\delta_x)\ 
\Rightarrow
\ \H^s(F)\leq (1+\eps)\zeta^s(F)\right).
\end{equation}
\end{lemma}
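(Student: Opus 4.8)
The plan is to extract the exceptional set $E$ from a single global open covering chosen so tightly that almost no point sees a set with oversized Hausdorff measure at small scales. First I would use $\H^s(X)<\infty$ together with Lemma~\ref{T33}: for each $j\in\bbbn$ pick an open $\frac1j$-covering $X\subset\bigcup_i U_{ij}$ with $\sum_i\zeta^s(U_{ij})\leq\H^s(X)+\frac1j$. The key observation is that if $F\subset X$ has small diameter, then $F$ is contained in finitely many of the $U_{ij}$, and one can estimate $\H^s(F)$ by summing $\zeta^s$ over the covering sets that meet $F$, provided we discard the ``bad'' covering sets whose $\zeta^s$ is not well controlled by their intersection pattern. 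The honest way to run this is via the density-type statement: define, for each $j$, the set of points lying in covering sets that contribute ``too much overlap,'' show these sets have small measure, take a limit, and let $E$ be the resulting null set.

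More concretely, here is the step sequence I would carry out. Step 1: fix $\eta>0$ small (to be chosen depending on $\eps$), and for each $j$ choose the open $\frac1j$-covering $\{U_{ij}\}_i$ as above. Step 2: for each $j$ let $G_j=\bigcup\{U_{ij}:\ \H^s(U_{ij})>(1+\eta)\zeta^s(U_{ij})\}$ be the union of the ``bad'' covering sets; bound $\H^s(G_j)$. Here one uses that $\sum_i\zeta^s(U_{ij})$ is within $\frac1j$ of $\H^s(X)$ while $\H^s(X)=\H^s(\bigcup_i U_{ij})\leq\sum_i\H^s(U_{ij})$, so the total ``excess'' $\sum_i(\H^s(U_{ij})-\zeta^s(U_{ij}))$ over the bad sets is at most roughly $\frac1j$; this is the pigeonhole heart of the argument, and combined with $\H^s(U_{ij})>(1+\eta)\zeta^s(U_{ij})$ on $G_j$ it forces $\sum_{\text{bad }i}\zeta^s(U_{ij})\leq\frac{1}{\eta j}$ and hence $\H^s_{1/j}(G_j)\leq\frac{1}{\eta j}$. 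Step 3: set $E=\bigcap_{N}\bigcup_{j\geq N}G_j$ (a $\limsup$ set); by the content bound and monotone/Borel-regularity lemmas above, $\H^s(E)=0$. Step 4: for $x\notin E$, choose $N$ with $x\notin G_j$ for all $j\geq N$, and set $\delta_x=\frac1N$ (or slightly less). Step 5: given $F$ with $x\in F\subset\bar B(x,\delta_x)$, fix $j\geq N$ with $\frac1j$ comparable to $\diam F$; since $\{U_{ij}\}$ is a $\frac1j$-covering, $F$ meets some $U_{ij}$ containing $x$, and because $x\notin G_j$ that set is ``good'': $\H^s(U_{ij})\leq(1+\eta)\zeta^s(U_{ij})$. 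Finally bound $\zeta^s(U_{ij})$ by $\zeta^s(F)$ up to a factor $\to1$ — this needs $F$ to actually \emph{fill up} a covering set, which it generally does not, so one must instead re-cover $F$ by a fresh tight small-scale covering and repeat the good/bad dichotomy at the scale of $F$ itself.

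The main obstacle is precisely the last comparison: there is no reason a single covering set $U_{ij}$ containing $F$ has diameter comparable to $\diam F$, so ``$\H^s(F)\leq(1+\eps)\zeta^s(F)$'' cannot come from one layer of covering. The clean fix is to iterate: apply the construction not to $X$ but, for the point $x\notin E$, to the ball $\bar B(x,\delta_x)$ rescaled, or better, to restate everything in terms of \emph{contents at the scale of $F$}. Concretely, for each $k$ let $E_k$ be the null set produced by the above argument with $\eps$ replaced by a summable sequence $\eps_k\downarrow0$ and with coverings of mesh $\to0$; take $E=\bigcup_k E_k$, still null. For $x\notin E$ and $F\ni x$ small, cover $F$ by a near-optimal $\diam F$-covering of $X$ from the relevant layer; the point $x$ lies in a good covering set $U$ of that layer with $\diam U\leq\diam F$ *is false in general* — so the right formulation, which I would ultimately adopt, is to build $E$ so that for $x\notin E$ and all small $\delta$, \emph{every} set $F$ with $x\in F$, $\diam F\leq\delta$ satisfies $\H^s_\delta(F)\leq(1+\eps)\zeta^s(F)$ by embedding $F$ into an optimal $\delta$-covering of $X$ and using that $x$ avoids the bad part; then let $\delta\to0$ is not needed since $F$ is fixed and we only need the content at its own scale, while $\H^s(F)=\sup_{\delta}\H^s_\delta(F)$ is handled by noting $\H^s_\delta(F)\leq\H^s(F)$ is the wrong direction — so one instead covers $F$ with mesh $\ll\diam F$, splits the good/bad sets, sums $\zeta^s$ of the good ones, and controls the bad ones' total $\zeta^s$ by the null-set construction. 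Getting this bookkeeping of nested scales and the pigeonhole excess bound to line up is the real content of the proof; everything else is an application of Lemmas~\ref{T33}, \ref{T10}, and \ref{T34}.
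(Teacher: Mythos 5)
There is a genuine gap, and it occurs at the exact point you flag yourself. First, the ``pigeonhole heart'' of Step~2 does not work as stated: from $\H^s(X)\leq\sum_i\H^s(U_{ij})$ (subadditivity) and $\sum_i\zeta^s(U_{ij})\leq\H^s(X)+\frac1j$ you only learn that the excess $\sum_i\bigl(\H^s(U_{ij})-\zeta^s(U_{ij})\bigr)$ is bounded \emph{below} by $-\frac1j$; to bound it above you would need $\sum_i\H^s(U_{ij})\leq\H^s(X)+o(1)$, which fails because the covering sets may overlap heavily (a near-optimal covering for $\sum\zeta^s$ can still have every $U_{ij}$ containing a common small-diameter set of large measure, so each term $\H^s(U_{ij})-\zeta^s(U_{ij})$ can be huge). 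Second, even granting some version of the good/bad dichotomy for the covering sets, knowing $\H^s(U)\leq(1+\eta)\zeta^s(U)$ for a covering set $U\supset F$ with $\diam U$ possibly much larger than $\diam F$ gives no control of $\H^s(F)$ by $\zeta^s(F)$; you acknowledge this, but the proposed multi-scale fix is never actually specified (the final paragraph corrects itself twice and ends by declaring the bookkeeping ``the real content of the proof''), so the conclusion \eqref{eq14} is not reached. The structural obstruction is that the bad behavior in \eqref{eq14} is witnessed by \emph{arbitrary} sets $F$ attached to a point, not by members of any preselected covering, so no single family of near-optimal coverings of $X$ can see it.

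The paper's proof goes the other way around and this is the idea you are missing: define $E$ directly as the set of points $x$ that admit, at every scale $1/j$, a closed witness $F_{x,j}$ with $x\in F_{x,j}\subset\bar B(x,1/j)$ and $\H^s(F_{x,j})>(1+\eps)\zeta^s(F_{x,j})$, and suppose $\H^s(E)>0$. Choose an open $U\supset E$ with $\H^s(U)<(1+\eps/4)\H^s(E)$ (Lemma~\ref{T10}, which uses $\H^s(X)<\infty$), and observe that the witnesses contained in $U$ with $j\geq 10/\delta$ form a \emph{fine covering of $E$ by closed sets}. Applying the generalized $5r$-covering lemma for arbitrary bounded sets (Lemma~\ref{T9}) and Corollary~\ref{T8}, one extracts pairwise disjoint bad sets $F'$ whose dilations $\5 F'$ cover $E$ up to finitely many of the $F'$ themselves; then $\zeta^s(F')<\H^s(F')/(1+\eps)$ on each bad set, disjointness makes the measures add up inside $U$, and after discarding a tail of small total measure one gets $\H^s_\delta(E)\leq\frac{1+\eps/2}{1+\eps}\H^s(E)$ uniformly in $\delta$, hence $\H^s(E)<\H^s(E)$, a contradiction. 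Using the bad witnesses themselves as the Vitali family — rather than trying to read the estimate off a globally chosen efficient covering — is what makes the comparison between $\H^s(F)$ and $\zeta^s(F)$ at the correct scale possible.
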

\begin{remark}
We do not assume measurability of the sets $F$.
\end{remark}
\begin{proof}
The claim is obvious for $s=0$, so assume $s>0$. Since $\zeta^s(F)=\zeta^s(\bar{F})$, it suffices to prove \eqref{eq14} for closed sets $F$. Let $E\subset X$ be the set of all points $x\in X$ such that for every $j\in\bbbn$, there is a closed set $F_{x,j}$ satisfying 
$$
x\in F_{x,j}\subset\bar{B}(x,1/j)
\quad
\text{and}
\quad
\H^s(F_{x,j})>(1+\eps)\zeta^s(F_{x,j}).
$$
Clearly, with this definition of $E$, \eqref{eq14} is true and it remains to show that $\H^s(E)=0$.
Suppose to the contrary $\H^s(E)>0$. According to Lemma~\ref{T10}, there is an open set $U$ such that $E\subset U$ and $\H^s(U)<\H^s(E)(1+\eps/4)$.
Given $\delta>0$, the family 
$$
\mathcal{F}=\{F_{x,j}: F_{x,j}\subset U,\ j\geq 10/\delta,\ x\in E\}
$$
is a fine covering of $E$ by closed sets. 
Note that $F_{x,j}\subset \bar{B}(x,1/j)$, $\diam F_{x,j}\leq 2/j\leq\delta/5$.
Lemma~\ref{T9} yields $ \mathcal{F}'\subset \mathcal{F}$ such that
$$
E\subset\bigcup_{F'\in\mathcal{F}'} \5 F',
$$
and the closed sets $F'\in\mathcal{F}'$ are pairwise disjoint. Since $\H^s(X)<\infty$, only countably many of them may have positive measure and the sum of measures is finite so there is a finite collections of sets
$F_1',\ldots,F_N'\in\mathcal{F}'$ such that 
$$
\sum_{F'\in\mathcal{F}'\setminus\{F_1',\ldots,F_N'\}} \H^s(F')<5^{-s}\H^s(E)\frac{\eps}{4}\, .
$$
According to Corollary~\ref{T8}, 
$$
E\subset\bigcup_{j=1}^N F_j'\; \cup\bigcup_{F'\in\mathcal{F}'\setminus\{F_1',\ldots,F_N'\}}
\5 F'.
$$
Since for each of the sets $F'\in\mathcal{F}'$ we have, $F'\subset U$, $\diam\5 F'\leq 5\diam F'\leq \delta$, 
\begin{equation*}
\begin{split}
\H^s_{\delta} (E) 
&\leq    
\sum_{j=1}^N \zeta^s(F_j') + 
\sum_{F'\in\mathcal{F}'\setminus\{F_1',\ldots,F_N'\}} \zeta^s(\5 F')\\
&\leq
\sum_{j=1}^N \zeta^s(F_j') + 
\sum_{F'\in\mathcal{F}'\setminus\{F_1',\ldots,F_N'\}} 5^s\zeta^s(F')\\
&\leq
\frac{1}{1+\eps}\left(\sum_{j=1}^N \H^s(F_j') + 
\sum_{F'\in\mathcal{F}'\setminus\{F_1',\ldots,F_N'\}} 5^s\H^s(F')\right)\\
&\leq
\frac{1}{1+\eps}\left(\H^s(U)+\H^s(E)\frac{\eps}{4}\right)
\leq
\H^s(E)\frac{1+\eps/2}{1+\eps}.
\end{split}    
\end{equation*}
The estimate is independent of $\delta$ so letting $\delta\to 0^+$ we get
$$
\H^s(E)\leq \H^s(E)\frac{1+\eps/2}{1+\eps}<\H^s(E)
$$ 
which is a clear contradiction.
\end{proof}

\section{Weighted Integral and weighted Haudorff measure}
\label{WI}
Throughout this section $(X,d)$ will be a metric space and functions $f:X\to [0,\infty]$ will not necessarily be measurable.
\begin{definition}
\label{D3}
For a function $f:X\to [0,\infty]$, a {\em weighted covering of $f$} is a
countable collection $\{(a_i,A_i)\}_{i \in \bbbn}$ of pairs of bounded sets $A_i \subset X$ and numbers $a_i \in [0,\infty]$ such that 
\begin{equation}
\label{eq3}
f(x) \leq \sum _i a_i \chi_{A_i}(x) \quad \text{for all} \quad x \in X \, .
\end{equation}
If in addition $\diam A_i\leq\delta$, $\delta\in(0,+\infty]$, for all $i\in\bbbn$, we say that $\{(a_i,A_i)\}_{i \in \bbbn}$ is a
{\em weighted $\delta$-covering of $f$}.
If $f=\chi_E$ we call $\{(a_i,A_i)\}_{i\in\bbbn}$ a {\em  weighted ($\delta$-)covering of $E$.}

Let $\delta \in (0,+\infty]$, and $s \in [0,\infty)$. The {\em weighted integral of $f$} is defined by
\begin{equation}
\label{eq4}
\int_X^\bullet f \, d\H^s_\delta := 
\inf \sum_{i=1}^\infty a_i \zeta^s(A_i),
\end{equation}
where the infimum is taken over all weighted $\delta$-coverings of $f$, and
$$
\int_X^\bullet f\, d\H^s=\lim_{\delta\to 0^+} \int_X^\bullet f\, d\H_\delta^s.
$$
Note that the limit exists since the integral \eqref{eq4} is non-increasing in $\delta$.

If no $\delta$-cover of $f$ exists, we set the weighted integral of $f$ to be  $+\infty$.
\end{definition}
\begin{remark}
\label{R1}
Since the diameter of a set and of its closure are equal, we may assume that the sets $A_i$ are closed.
\end{remark}

\begin{definition}
The {\em weighted Hausdorff content} 
and the {\em weighted Hausdorff measure}
of a set $E\subset X$ are respectively defined by
$$
\lambda^s_\delta(E)=\int_X^\bullet \chi_E\, d\H^s_\delta
\quad
\text{and}
\quad
\lambda^s(E)=\lim_{\delta\to 0^+}\lambda_\delta^s(E)=\int_X^\bullet \chi_E\, d\H^s.
$$
In other words
$\lambda_\delta^s(E)=\inf\sum_{i=1}^\infty a_i\zeta^s(A_i)$,
where the infimum is over all collections $\{(a_i,A_i)\}_{i\in\bbbn}$ such that
$\sum a_i\chi_{A_i}(x) \geq 1$ for all $x\in E$, and $\diam A_i\leq\delta$, for all $i\in\bbbn$.
\end{definition}
\begin{remark}
Note that while in the definition of a step function we assumed that the sets $A_i$ were disjoint, the sets $A_i$ here are not required to be disjoint. A step function uniquely determines the sets $A_i$ and numbers $a_i$, but the same function on the right hand side of 
\eqref{eq3} can be represented in several different ways. 
It is important that the infimum in \eqref{eq4} is taken over all collections $\{(a_i,A_i)\}$ and not only over those corresponding to step functions.
\end{remark}
\begin{remark}
\label{R4}
It seems that Federer \cite[2.10.24]{federer} was the first to define weighted integrals. He denoted them by $\lambda_\delta(f)$ but did not use any terms to refer to them.
The first systematic study of weighted measures was done by
Kelly \cite{kelly1,kelly2} under the name of {\em method III measures}, although he is using the name weighted covering.
The name weighted Hausdorff measures was introduced by Howroyd \cite{howroyd}, see also \cite[Chapter~8]{mattila}. The term weighted integral and the notation $\int_X^\bullet f\, d\H^s_\delta$ appears in \cite{reichel}.
\end{remark}

\subsection{Coarea inequality for weighted integrals}
To provide motivation for the notion of the weighted integral, we will prove \eqref{eq24}. In fact we will prove a slightly more general inequality that applies to any uniformly continuous map between metric spaces. The point is that the notion of weighted integral is designed to make the proof very easy.

\begin{definition}
\label{D1}
For an arbitrary map $f:X\to Y$ between metric spaces, $s,t\in [0,\infty)$, $\delta\in (0,\infty]$, and any $E\subset X$ we define 
$$
\Phi^{s,t}_\delta(f,E):=\inf\sum_{i=1}^\infty \zeta^s(f(A_i))\zeta^t(A_i),
$$
where the infimum is taken over all $\delta$-coverings $\{A_i\}_{i=1}^\infty$ of $E$. Obviously, $\delta\mapsto\Phi_\delta^{s,t}$ is non-increasing, allowing the definition
$$
\Phi^{s,t}(f,E):=\lim_{\delta\to 0^+}\Phi^{s,t}_\delta(f,E).
$$
\end{definition}
\begin{remark}
This definition is motivated by a similar definition in \cite[Appendix~A]{HKK} and also by the definition of the mapping content introduced in \cite{azzams,davids}, see Definition~\ref{bD1}.
\end{remark}
The proofs of 
the next two easy results are left to the reader.
\begin{lemma}
\label{T35}
For any $\delta\in (0,\infty]$, $s,t\in [0,\infty)$, $E,F\subset X$, and $f:X\to Y$ we have
$$
\Phi_\delta^{s,t}(f,E\cup F)\leq \Phi_\delta^{s,t}(f,E)+\Phi_\delta^{s,t}(f,F)
\quad
\text{so}
\quad
\Phi^{s,t}(f,E\cup F)\leq \Phi^{s,t}(f,E)+\Phi^{s,t}(f,F).
$$
\end{lemma}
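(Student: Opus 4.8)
\textbf{Proof proposal for Lemma~\ref{T35}.}

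The plan is a routine subadditivity argument, carried out first at the level of the contents $\Phi_\delta^{s,t}$ and then passed to the limit. First I would dispose of the degenerate cases. If either $\Phi_\delta^{s,t}(f,E)=+\infty$ or $\Phi_\delta^{s,t}(f,F)=+\infty$ the asserted inequality is trivial, so assume both are finite; in particular both $E$ and $F$ admit $\delta$-coverings by bounded sets. Likewise, if $E\cup F$ admits no $\delta$-covering then neither does $E$ nor $F$, which is the case already handled. So we may assume all three quantities are defined via genuine $\delta$-coverings.

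Next, fix $\eps>0$ and use the definition of the infimum in Definition~\ref{D1} to choose a $\delta$-covering $\{A_i\}_{i=1}^\infty$ of $E$ and a $\delta$-covering $\{B_j\}_{j=1}^\infty$ of $F$ with
$$
\sum_{i=1}^\infty \zeta^s(f(A_i))\zeta^t(A_i)\le \Phi_\delta^{s,t}(f,E)+\eps,
\qquad
\sum_{j=1}^\infty \zeta^s(f(B_j))\zeta^t(B_j)\le \Phi_\delta^{s,t}(f,F)+\eps.
$$
Then the concatenated countable family $\{A_i\}_{i}\cup\{B_j\}_{j}$ is a $\delta$-covering of $E\cup F$ (every set in it has diameter $\le\delta$, and its union contains $E\cup F$), so it is admissible in the infimum defining $\Phi_\delta^{s,t}(f,E\cup F)$. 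Hence
$$
\Phi_\delta^{s,t}(f,E\cup F)\le \sum_{i=1}^\infty \zeta^s(f(A_i))\zeta^t(A_i)+\sum_{j=1}^\infty \zeta^s(f(B_j))\zeta^t(B_j)\le \Phi_\delta^{s,t}(f,E)+\Phi_\delta^{s,t}(f,F)+2\eps.
$$
Letting $\eps\to 0^+$ gives the first inequality. For the second inequality, apply the first one at each $\delta>0$ and let $\delta\to 0^+$, using that $\Phi^{s,t}(f,\cdot)=\lim_{\delta\to0^+}\Phi_\delta^{s,t}(f,\cdot)$ (the limit exists by monotonicity in $\delta$) and that the limit of a sum is the sum of the limits.

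There is no real obstacle here; the only points needing a word of care are the $+\infty$/"no covering" cases, which I handle up front, and the observation that the union of two countable $\delta$-coverings is again a countable $\delta$-covering, which is immediate.
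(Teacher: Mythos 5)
Your argument is correct and is precisely the standard subadditivity argument the paper intends (the proof of Lemma~\ref{T35} is explicitly left to the reader): take near-optimal $\delta$-coverings of $E$ and $F$, concatenate them into a $\delta$-covering of $E\cup F$, let $\eps\to 0^+$, then let $\delta\to 0^+$ using monotonicity in $\delta$. One small remark: the claim that if $E\cup F$ admits no $\delta$-covering then \emph{neither} $E$ nor $F$ does is stated too strongly (only one of them need fail), but this sentence is redundant anyway, since once both $\Phi_\delta^{s,t}(f,E)$ and $\Phi_\delta^{s,t}(f,F)$ are assumed finite, both sets admit $\delta$-coverings and hence so does their union.
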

\begin{lemma}
\label{T16}
If $f:X\to Y$ is Lipschitz continuous and $E\subset X$, $s,t \in [0,\infty)$, and $\delta\in (0,\infty]$, then
$$
\Phi^{s,t}_\delta(f,E)\leq (\lip f)^s\, \frac{\omega_s\omega_t}{\omega_{s+t}}\, \H_\delta^{s+t}(E)
\quad
\text{so}
\quad
\Phi^{s,t}(f,E)\leq (\lip f)^s\, \frac{\omega_s\omega_t}{\omega_{s+t}}\, \H^{s+t}(E).
$$
\end{lemma}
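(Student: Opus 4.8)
Write $L=\lip f$ and fix $\delta\in(0,\infty]$. The whole statement reduces to a single termwise estimate at the level of diameters, after which one just passes to the infimum over $\delta$-coverings and then lets $\delta\to 0^+$. So first I would fix an arbitrary $\delta$-covering $\{A_i\}_{i=1}^\infty$ of $E$ and bound each summand $\zeta^s(f(A_i))\,\zeta^t(A_i)$ occurring in the definition of $\Phi^{s,t}_\delta(f,E)$ by $L^s\,\frac{\omega_s\omega_t}{\omega_{s+t}}\,\zeta^{s+t}(A_i)$.

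For the key estimate, note that for any bounded $A\subset X$ the image $f(A)$ is again bounded and $\diam f(A)\leq L\,\diam A$ because $f$ is $L$-Lipschitz (trivially so when $A=\varnothing$). Plugging this into the definition $\zeta^\alpha(A)=\frac{\omega_\alpha}{2^\alpha}(\diam A)^\alpha$ gives
$$
\zeta^s(f(A))\,\zeta^t(A)=\frac{\omega_s}{2^s}(\diam f(A))^s\cdot\frac{\omega_t}{2^t}(\diam A)^t\leq\frac{\omega_s\omega_t}{2^{s+t}}\,L^s(\diam A)^{s+t}=L^s\,\frac{\omega_s\omega_t}{\omega_{s+t}}\,\zeta^{s+t}(A).
$$
(In the boundary cases $s=0$ or $t=0$ one invokes the conventions $\zeta^0(\varnothing)=0$, $\zeta^0(A)=1$ for $A\neq\varnothing$, and $0^0=1$; the inequality still holds.)

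Then I would apply this termwise to the covering $\{A_i\}$ and sum, obtaining $\sum_i\zeta^s(f(A_i))\,\zeta^t(A_i)\leq L^s\,\frac{\omega_s\omega_t}{\omega_{s+t}}\sum_i\zeta^{s+t}(A_i)$. Since the left-hand side is an admissible value for the infimum defining $\Phi^{s,t}_\delta(f,E)$, taking the infimum over all $\delta$-coverings $\{A_i\}$ of $E$ on the right yields $\Phi^{s,t}_\delta(f,E)\leq L^s\,\frac{\omega_s\omega_t}{\omega_{s+t}}\,\H^{s+t}_\delta(E)$, which is the first assertion; letting $\delta\to 0^+$ and using $\Phi^{s,t}(f,E)=\lim_{\delta\to 0^+}\Phi^{s,t}_\delta(f,E)$ and $\H^{s+t}(E)=\lim_{\delta\to 0^+}\H^{s+t}_\delta(E)$ gives the second. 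There is essentially no obstacle here; the only subtlety worth flagging is that the definition of $\Phi^{s,t}_\delta$ imposes no size restriction on $\diam f(A_i)$, so a $\delta$-covering of $E$ need not map to a $\delta$-covering of $f(E)$ — but this plays no role in the argument above.
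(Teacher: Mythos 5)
Your proof is correct and is exactly the straightforward argument the paper has in mind — indeed the paper leaves this lemma to the reader precisely because the termwise estimate $\zeta^s(f(A_i))\zeta^t(A_i)\le (\lip f)^s\frac{\omega_s\omega_t}{\omega_{s+t}}\zeta^{s+t}(A_i)$, followed by taking the infimum over $\delta$-coverings and letting $\delta\to 0^+$, is all that is needed. (The degenerate case where no $\delta$-covering of $E$ exists is trivial, since then $\H^{s+t}_\delta(E)=\infty$.)
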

The next version of the coarea inequality easily follows from the definition of the weighted integral and is a building block of the proof of the main coarea inequality, Theorem~\ref{T20}.
\begin{lemma}
\label{T17}
If $f:X\to Y$ is a uniformly continuous map between metric spaces, $0 \leq t \leq s<\infty$ and $E\subset X$, then
\begin{equation}
\label{eq25}
\lim_{\delta\to 0^+}\int_Y^\bullet \H^{s-t}_\delta(f^{-1}(y)\cap E)\, d\H^t(y)\leq \Phi^{t,s-t}(f,E).
\end{equation}
\end{lemma}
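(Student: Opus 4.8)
The plan is to reduce inequality \eqref{eq25} to Lemma~\ref{T16} by carefully tracking how a single $\delta$-covering of $E$ produces, simultaneously, a covering of $Y$ by images of the pieces and a covering of each slice $f^{-1}(y)\cap E$. Fix $\delta>0$ and let $\{A_i\}_{i=1}^\infty$ be any $\delta$-covering of $E$, which we may assume consists of closed sets (Remark~\ref{R1}). For each $i$ set $a_i=\zeta^{s-t}(A_i)$ (equivalently $\tfrac{\omega_{s-t}}{2^{s-t}}(\diam A_i)^{s-t}$) and $B_i=f(A_i)\subset Y$. The key observation is that for any fixed $y\in Y$, the subcollection $\{A_i : y\in B_i\}=\{A_i : f^{-1}(y)\cap A_i\neq\varnothing\}$ covers $f^{-1}(y)\cap E$, and since $f$ is uniformly continuous, $\diam A_i\le\delta$ forces $\diam B_i$ small and $\diam A_i$ itself small, so this is a $\delta$-covering of the slice. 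Hence
\[
\H^{s-t}_{\delta}\bigl(f^{-1}(y)\cap E\bigr)\le\sum_{i=1}^\infty \zeta^{s-t}(A_i)\,\chi_{B_i}(y)=\sum_{i=1}^\infty a_i\chi_{B_i}(y),
\]
so $\{(a_i,B_i)\}_{i\in\bbbn}$ is a weighted covering (in the sense of Definition~\ref{D3}) of the function $y\mapsto\H^{s-t}_{\delta}(f^{-1}(y)\cap E)$, valid on all of $Y$.

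Next I would feed this weighted covering into the definition of the weighted integral. Taking diameters into account, $\{B_i\}$ is a $\delta'$-covering of $Y$ for a $\delta'=\delta'(\delta)\to 0$ as $\delta\to 0^+$ (this is where uniform continuity, not merely Lipschitz continuity, enters: $\diam f(A_i)\le\omega_f(\diam A_i)$ where $\omega_f$ is the modulus of continuity). Therefore, by \eqref{eq4},
\[
\int_Y^\bullet \H^{s-t}_{\delta}\bigl(f^{-1}(y)\cap E\bigr)\,d\H^t_{\delta'}(y)\le\sum_{i=1}^\infty a_i\,\zeta^t(B_i)=\sum_{i=1}^\infty \zeta^{s-t}(A_i)\,\zeta^t(f(A_i)).
\]
Taking the infimum over all $\delta$-coverings $\{A_i\}$ of $E$ on the right gives exactly $\Phi^{t,s-t}_{\delta}(f,E)$ (note the roles: $\zeta^{s-t}(A_i)=\zeta^{s-t}(A_i)$ pairs with $\zeta^t(f(A_i))$, matching $\Phi^{t,s-t}$ which infimizes $\sum\zeta^t(f(A_i))\zeta^{s-t}(A_i)$). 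So for every $\delta>0$,
\[
\int_Y^\bullet \H^{s-t}_{\delta}\bigl(f^{-1}(y)\cap E\bigr)\,d\H^t_{\delta'(\delta)}(y)\le\Phi^{t,s-t}_{\delta}(f,E)\le\Phi^{t,s-t}(f,E).
\]

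Finally I would pass to the limit $\delta\to 0^+$. On the right the bound is already uniform. On the left I must replace the inner content $\H^{s-t}_{\delta}$ by the measure $\H^{s-t}$ and the outer content-level $\delta'$ by $0$. For the outer integral: as $\delta\to 0^+$ the integrand $\H^{s-t}_{\delta}(f^{-1}(y)\cap E)$ increases monotonically in $y$ (pointwise, since $\delta\mapsto\H^{s-t}_\delta$ is non-increasing) to $\H^{s-t}(f^{-1}(y)\cap E)$; combined with the fact that $\int_Y^\bullet(\cdot)\,d\H^t_{\delta'}$ increases to $\int_Y^\bullet(\cdot)\,d\H^t$ as $\delta'\to 0$, a diagonal/monotonicity argument gives
\[
\lim_{\delta\to 0^+}\int_Y^\bullet \H^{s-t}_{\delta}\bigl(f^{-1}(y)\cap E\bigr)\,d\H^t_{\delta'(\delta)}(y)\ge\lim_{\delta\to 0^+}\int_Y^\bullet \H^{s-t}_{\delta}\bigl(f^{-1}(y)\cap E\bigr)\,d\H^t(y),
\]
which is what appears on the left of \eqref{eq25}. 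The main obstacle, and the only genuinely delicate point, is this double limit interchange: one must be careful that the weighted integral $\int_Y^\bullet$ is itself an infimum (not obviously continuous), so to pass $\delta'\to 0$ inside I would instead argue directly that for \emph{fixed} small $\delta_0$, $\int_Y^\bullet\H^{s-t}_{\delta_0}(f^{-1}(y)\cap E)\,d\H^t(y)\le\Phi^{t,s-t}(f,E)$ by choosing the covering $\{A_i\}$ with $\diam A_i$ small enough that $\diam f(A_i)$ is as small as desired while keeping $\diam A_i\le\delta_0$ — i.e. decouple the two smallness requirements from the start — and then let $\delta_0\to 0^+$ using that $\delta\mapsto\int_Y^\bullet\H^{s-t}_\delta(\cdot)\,d\H^t$ is monotone with limit $\int_Y^\bullet\H^{s-t}(\cdot)\,d\H^t$. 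Everything else is bookkeeping with $\zeta$ and diameters.
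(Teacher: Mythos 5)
Your final, decoupled argument --- fixing the inner content parameter $\delta_0$, using $\{(\zeta^{s-t}(A_i),f(A_i))\}$ as a weighted $\eta(\delta)$-covering of $y\mapsto\H^{s-t}_{\delta_0}(f^{-1}(y)\cap E)$ for coverings of fineness $\delta\le\delta_0$, and only then letting $\delta_0\to 0^+$ --- is exactly the paper's proof, and it is correct. You were right to abandon the coupled double-limit version: since $\int^\bullet g\,d\H^t_{\delta'}\le\int^\bullet g\,d\H^t$, a bound on the coupled quantity alone does not bound the left-hand side of \eqref{eq25}, so the decoupling is the essential (and shared) step.
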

\begin{remark}
At this point it is not entirely clear that we can pass to the limit under the sign of the integral as $\delta\to 0^+$, since we do not a priori have the monotone convergence theorem for weighted integrals. In fact such a result is true since according to Theorem~\ref{T5}, the weighted integral equals the upper integral, but Theorem~\ref{T5} is difficult.
\end{remark}
\begin{remark}
\label{R5}
Lemma~\ref{T17} and Lemma~\ref{T16} yield that if in addition $f$ is Lipschitz continuous, then
$$
\lim_{\delta\to 0^+}\int_Y^\bullet \H^{s-t}_\delta(f^{-1}(y)\cap E)\, d\H^t(y)\leq
\Phi^{t,s-t}(f,E)\leq (\lip f)^t\, \frac{\omega_{s-t}\omega_t}{\omega_s}\, \H^s(E).
$$
Therefore Theorem~\ref{T14} is a consequence of easy Lemma~\ref{T17} and the deep Theorem~\ref{T15} (reformulated below as Theorem~\ref{T5}).
\end{remark}
\begin{proof}[Proof of Lemma~\ref{T17}]
 Assume that $\Phi^{t,s-t}(f,E)<\infty$, as otherwise the inequality is obvious. Fix $ \delta_o \in (0,\infty] $. Given $\eps>0$ and $0<\delta \leq \delta_o$, let $\{ A_i\}_{i=1}^\infty$ be a $\delta$-covering of $E$ such that
\begin{equation}
\label{eq26}
\sum_{i=1}^\infty \zeta^t(f(A_i))\zeta^{s-t}(A_i)<\Phi_\delta^{t,s-t}(f,E)+\eps.
\end{equation}
Since the sets $\{A_i:\, y\in f(A_i)\}$ form a $\delta$-covering of $f^{-1}(y)\cap E$, we have 
\begin{equation}
\label{eq27}
\H^{s-t}_{\delta_o}(f^{-1}(y)\cap E)\leq \sum_{i=1}^\infty a_i\chi_{F_i}(y),
\quad
\text{where}
\quad
\text{$a_i=\zeta^{s-t}(A_i)$ and $F_i=f(A_i)$.}
\end{equation}
Since the mapping $f$ is uniformly continuous,
$$
\eta(\delta)=\sup_{A\subset X\atop\diam A\leq\delta} \diam f(A)\to 0
\quad
\text{as $\delta\to 0^+$}.
$$
According to \eqref{eq27}, $\{(a_i,F_i)\}_{i=1}^\infty$ forms a weighted $\eta(\delta)$-covering of the function 
$y\mapsto\H^{s-t}_{\delta_o}(f^{-1}(y)\cap E)$ and the definition of the weighted integral yields
$$
\int_Y^\bullet \H^{s-t}_{\delta_o}(f^{-1}(y)\cap E)\, d\H^t_{\eta(\delta)}(y)\leq
\sum_{i=1}^\infty a_i\zeta^t(F_i)<\Phi_\delta^{t,s-t}(f,E)+\eps,
$$
where the last inequality is nothing else, but inequality \eqref{eq26}. Letting $\delta\to 0^+$ first and then $\eps\to 0^+$ proves 
$$
\int_Y^\bullet \H^{s-t}_{\delta_o}(f^{-1}(y)\cap E)\, d\H^t(y)\leq \Phi^{t,s-t}(f,E).
$$
Since, $\delta_o$ was arbitrary, \eqref{eq25} follows.
\end{proof}
The strategy to prove the coarea inequality, Theorem~\ref{T20} (generalizarion of Theorem~\ref{T14}), from Lemma~\ref{T17} is to apply Theorem~\ref{T5} below to replace the weighted integral in inequality \eqref{eq25} with the upper integral and then apply the monotone convergence theorem, Lemma~\ref{T1}. So, it is clear that the heart of the proof lies in proving Theorem~\ref{T5}, and this is the focus of the Sections~4, 5, and~6. Here is where we deviate from literature significantly and provide a new proof that avoids Davies' result, Theorem~\ref{T19}.

\subsection{Fundamental properties of weighted integrals}
Fundamental properties of the weighted Hausdorff measures and the weighted integrals are stated in Theorem~\ref{T7} and Theorem~\ref{T5} which is a reformulation of Theorem~\ref{T15} in a notation consistent with that of Theorem~\ref{T7}.

\begin{theorem}
\label{T7}
Let $X$ be a metric space and $s \in [0,\infty)$. 
Then for any $E\subset X$, 
\begin{equation}
\label{eq22}
\lambda^s(E)=\H^s(E).
\end{equation}
Moreover, if $\delta\in (0,\infty]$, then
\begin{equation}
\label{eq21}
(8\cdot 6^{s})^{-1}\H^s_{6\delta}(E)\leq\lambda^s_\delta(E)\leq \H^s_\delta(E).
\end{equation}
\end{theorem}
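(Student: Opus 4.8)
The plan is to first establish the double inequality \eqref{eq21} for a fixed $\delta$, and then derive \eqref{eq22} by letting $\delta\to 0^+$. The right-hand inequality $\lambda^s_\delta(E)\leq\H^s_\delta(E)$ is immediate: given any $\delta$-covering $E\subset\bigcup_i A_i$ by bounded sets, the collection $\{(1,A_i)\}_{i\in\bbbn}$ is a weighted $\delta$-covering of $\chi_E$, since $\sum_i\chi_{A_i}(x)\geq 1$ for every $x\in E$; taking the infimum over all such coverings gives the bound. So the content of \eqref{eq21} lies entirely in the left-hand inequality $(8\cdot 6^s)^{-1}\H^s_{6\delta}(E)\leq\lambda^s_\delta(E)$.

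For the left-hand inequality, I would start with a weighted $\delta$-covering $\{(a_i,A_i)\}_{i\in\bbbn}$ of $E$ with $\sum_i a_i\chi_{A_i}\geq 1$ on $E$ and $\sum_i a_i\zeta^s(A_i)$ close to $\lambda^s_\delta(E)$; by Remark~\ref{R1} we may take the $A_i$ closed. The difficulty is that the weights $a_i$ can be small, so no single $A_i$ need cover much of $E$ by itself — the ``mass one'' at a point $x\in E$ is accumulated from many sets. The standard device is a \emph{dyadic layering in the weights}: for $k\in\bbbn_0$ let $I_k=\{i:\ 2^{-k-1}<a_i\leq 2^{-k}\}$ (handling $a_i>1$ and $a_i=\infty$ separately, since those sets already cover their points with room to spare), and let $E_k=\{x\in E:\ \sum_{i\in I_k}\chi_{A_i}(x)\geq \tfrac{1}{8}2^{k+1}c\}$ for a suitable absolute constant $c$ arranged so that $\bigcup_k E_k\supset E$ (each point must lie in ``many'' sets from \emph{some} layer, because $\sum_k\sum_{i\in I_k}a_i\chi_{A_i}(x)\geq 1$ and the layer sums are geometric). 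On $E_k$, each point is covered at least $N_k:=\lceil\tfrac{1}{8}2^{k+1}c\rceil$ times by sets of $I_k$ of diameter $\leq\delta$, so $\{A_i\}_{i\in I_k}$ is an $N_k$-fold $\delta$-covering of $E_k$; a Vitali/multiplicity argument (or simply summing $\zeta^s$ over the cover and dividing by the multiplicity) gives $\H^s_\delta(E_k)\leq N_k^{-1}\sum_{i\in I_k}\zeta^s(A_i)\lesssim 2^{-k}\sum_{i\in I_k}\zeta^s(A_i)\asymp\sum_{i\in I_k}a_i\zeta^s(A_i)$. Summing over $k$ and using $\H^s_\delta(E)\leq\sum_k\H^s_\delta(E_k)$ (countable subadditivity of the content) yields $\H^s_{c'\delta}(E)\leq 8\cdot 6^s\,\lambda^s_\delta(E)$ after tracking constants; the factor $6$ in the scale $6\delta$ and the $6^s$ come from enlarging covering sets in the multiplicity-reduction step, exactly as in Lemma~\ref{T9}.

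Finally, \eqref{eq22} follows from \eqref{eq21} by a limiting argument: from $\lambda^s_\delta(E)\leq\H^s_\delta(E)\leq\H^s(E)$ we get $\lambda^s(E)=\lim_{\delta\to 0^+}\lambda^s_\delta(E)\leq\H^s(E)$, and from $(8\cdot 6^s)^{-1}\H^s_{6\delta}(E)\leq\lambda^s_\delta(E)$ we get, letting $\delta\to 0^+$ so that $6\delta\to 0^+$ as well, $(8\cdot 6^s)^{-1}\H^s(E)\leq\lambda^s(E)$ — so $\lambda^s$ and $\H^s$ are comparable with constants depending only on $s$. To upgrade this comparability to an actual \emph{equality} I expect one needs a separate, softer argument: the weighted Hausdorff measure $\lambda^s$ agrees with $\H^s$ at the level of the limit because any weighted covering can, scale by scale, be ``resolved'' into an ordinary covering without loss in the limit (this is essentially the statement that Method~III and Method~I/II constructions give the same measure for $\H^s$), so one shows $\lambda^s(E)\geq\H^s(E)$ by a more careful version of the layering above in which the constant tends to $1$ as $\delta\to 0$, or by invoking a density/regularity reduction. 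The main obstacle is precisely this last point — getting the sharp constant $1$ in the limit rather than merely a dimensional constant — and I would handle it by combining the crude bound \eqref{eq21} with Lemma~\ref{T6}, which says that at $\H^s$-a.e.\ point and all small scales one has $\H^s(F)\leq(1+\eps)\zeta^s(F)$, to run the layering argument locally with multiplicative error $(1+\eps)$ and then let $\eps\to 0$.
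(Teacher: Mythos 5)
Your reduction of the right-hand inequality of \eqref{eq21} and your limiting argument from \eqref{eq21} to the case $\H^s(E)=\infty$ of \eqref{eq22} are fine, but the central step of your argument has a genuine gap: the claim that an $N_k$-fold $\delta$-covering of $E_k$ gives $\H^s_\delta(E_k)\leq N_k^{-1}\sum_{i\in I_k}\zeta^s(A_i)$. Subadditivity of the content only gives $\H^s_\delta(E_k)\leq\sum_{i\in I_k}\zeta^s(A_i)$, with no gain from the multiplicity: $\zeta^s$ is not additive over the covered set, so you cannot ``sum and divide by the multiplicity,'' and a plain Vitali selection (Lemma~\ref{T9}) produces a disjoint subfamily whose dilations cover $E_k$ but gives no control of the form $\sum_{\text{selected}}\zeta^s\lesssim N_k^{-1}\sum_{i\in I_k}\zeta^s(A_i)$ either. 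In a general metric space the statement you are invoking is exactly the special case $a_i=1/N_k$ of the weighted covering theorem (Theorem~\ref{T11} and Corollary~\ref{T13}), i.e.\ it is the nontrivial content of the whole proof, not a routine covering remark; the paper proves it by Nazarov's variational argument (minimize $\psi(\alpha)=\sum_i\alpha_i b_i$ over fractional covers, use minimality to show that the weighted mass near any ball is at least half of that ball's weight, then induct by tripling the largest ball and recursing on the complement), and the factors $3$, $6^s$ and $8$ in \eqref{eq21} come precisely from that theorem, its block decomposition, and the passage from sets $A_i$ to enclosing balls. Your dyadic layering in the weights is only the bookkeeping analogue of the block decomposition used to pass from the finite to the countable case; without a proof of the finite weighted covering theorem the argument does not close.

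On \eqref{eq22}: you are right that the limit of \eqref{eq21} only gives comparability and that Lemma~\ref{T6} with error $(1+\eps)$ is the key to equality, but the paper's mechanism is different from ``re-running the layering locally.'' For $\H^s(E)<\infty$ one takes a Borel hull $\tilde E\supset E$ of the same measure, defines the increasing sets $W_j$ of points where $\H^s(\tilde E\cap F)\leq(1+\eps)\zeta^s(F)$ for all $F$ with $x\in F\subset\bar B(x,1/j)$ (Lemma~\ref{T6} gives $\H^s(\tilde E\setminus\bigcup_j W_j)=0$, and Lemma~\ref{T34} gives $\H^s(E)=\lim_j\H^s(E\cap W_j)$), and then, given a weighted $1/j$-covering $\{(a_k,A_k)\}$ of $E$ by closed sets, simply integrates the pointwise inequality $\chi_{E\cap W_j}\leq\sum_{k:A_k\cap W_j\neq\varnothing}a_k\chi_{\tilde E\cap A_k}$ against the countably additive measure $\H^s$ and converts $\H^s(\tilde E\cap A_k)\leq(1+\eps)\zeta^s(A_k)$ via the definition of $W_j$; no covering theorem and in fact no use of \eqref{eq21} is needed in this case (\eqref{eq21} enters only to handle $\H^s(E)=\infty$). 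So your outline for \eqref{eq22} points in the right direction but leaves out the actual mechanism (Borel hull, $W_j$, the increasing-sets lemma for the measure, and integration of the weighted-cover inequality), and, more importantly, the proof of the left inequality in \eqref{eq21} is missing its essential ingredient.
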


\begin{remark}
\label{R3}
Passing to the limit in \eqref{eq21} as $\delta\to 0^+$, yields
$(8\cdot 6^{s})^{-1}\H^s(E)\leq\lambda^s(E)\leq \H^s(E)$
which is weaker than \eqref{eq22} so \eqref{eq22} is somewhat surprising.
\end{remark}
Theorem~\ref{T7} will play a crucial role in the proof of
\begin{theorem}
 \label{T5}
Let $X$ be a metric space.
 For $s \in [0,\infty)$, and any $f:X \to [0,\infty]$ we have
\begin{equation}
\label{eq8}
\int^\bullet_X f \ d\mathcal{H}^s = \int^*_X f \ d\mathcal{H}^s \, .
\end{equation}
\end{theorem}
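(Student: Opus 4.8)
\textbf{Proof strategy for Theorem~\ref{T5}.}

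The plan is to establish the two inequalities in \eqref{eq8} separately, the easy one directly and the hard one by reducing to Theorem~\ref{T7}. First I would dispose of the inequality $\int^\bullet_X f\, d\H^s \le \int^*_X f\, d\H^s$. By Lemma~\ref{T2} the upper integral equals the infimum of $\int_X \phi\, d\mu$ over step functions $\phi = \sum_i a_i\chi_{A_i}$ with $\phi \ge f$ everywhere; here the $A_i$ are disjoint measurable sets, but we do not need disjointness. Given such a step function and a $\delta > 0$, each $A_i$ can be split into countably many pieces of diameter at most $\delta$ (since $X$ is a metric space, cover $A_i$ by balls of radius $\delta/2$ and use a disjointification), and $\zeta^s$ is countably subadditive on such decompositions in the sense needed, so $\{(a_i, A_{ij})\}$ is a weighted $\delta$-covering of $f$ with $\sum_{i,j} a_i \zeta^s(A_{ij})$ close to $\sum_i a_i \H^s_\delta(A_i) \le \sum_i a_i \H^s(A_i) = \int_X \phi\, d\mu$ after taking $\delta \to 0^+$; more carefully one takes, for each $i$, a near-optimal $\delta$-covering of $A_i$ realizing $\H^s_\delta(A_i)$ and weights each piece by $a_i$. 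Passing to the limit gives $\int^\bullet_X f\, d\H^s \le \int_X \phi\, d\mu$, and taking the infimum over $\phi$ finishes this direction. (This is the easy inequality referenced as \eqref{eq6} in the introduction.)

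For the reverse inequality $\int^*_X f\, d\H^s \le \int^\bullet_X f\, d\H^s$, the key point is that a weighted covering $\{(a_i, A_i)\}$ of $f$ does \emph{not} directly yield a measurable majorant of $f$ whose integral is controlled, because the sum $\sum_i a_i \chi_{A_i}$ is a legitimate measurable majorant but its Lebesgue integral is $\sum_i a_i \H^s(A_i)$, which can be far larger than $\sum_i a_i \zeta^s(A_i)$ — the whole difficulty is exactly that $\H^s(A_i)$ may dwarf $\zeta^s(A_i) = \H^s_\infty(A_i)$. The idea is to work at a fixed scale $\delta$ first. Fix $\delta > 0$ and a weighted $\delta$-covering $\{(a_i, A_i)\}$ of $f$ with $\sum_i a_i\zeta^s(A_i) < \int^\bullet_X f\, d\H^s_\delta + \eps$. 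For each $i$ we want to replace the crude majorant $a_i\chi_{A_i}$ by something whose integral is comparable to $a_i\zeta^s(A_i)$. This is where Theorem~\ref{T7} enters: by \eqref{eq21} applied at a suitable scale, $\lambda^s_\delta$ and $\H^s$-type contents are comparable up to the constant $8\cdot 6^s$, but the truly useful fact is the \emph{equality} $\lambda^s = \H^s$ of \eqref{eq22}. I would use it as follows: the function $g(x) := \int^\bullet_X f\, d\H^s$ (thought of via weighted coverings) can be compared to an honest upper integral by noting that for the restriction argument one considers $f$ itself and builds, from the weighted $\delta$-coverings, measurable step-function majorants whose integrals against $\H^s$ are controlled by $\lambda^s$-contents of the relevant pieces, and then \eqref{eq22} converts those $\lambda^s$-contents back to $\H^s$-contents of diameter-$\delta$ sets, i.e. to $\zeta^s$ up to negligible loss.

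The cleanest route, and the one I expect the authors take, is: by Lemma~\ref{T2} it suffices to find, for every $\eps>0$, a step function $\phi \ge f$ with $\int_X \phi\, d\mu \le \int^\bullet_X f\, d\H^s + \eps$. Since $\int^\bullet_X f\, d\H^s = \lim_{\delta\to 0^+}\int^\bullet_X f\, d\H^s_\delta$, pick $\delta$ small and a weighted $\delta$-covering $\{(a_i,A_i)\}$ nearly realizing $\int^\bullet_X f\, d\H^s_\delta$; the natural majorant $\phi_0 = \sum a_i\chi_{A_i}$ satisfies $\phi_0 \ge f$ but we must bound $\int_X \phi_0\, d\mu = \sum a_i \H^s(A_i)$. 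Here I would invoke Lemma~\ref{T6}: outside a set $N$ of $\H^s$-measure zero, at all sufficiently small scales the Hausdorff measure of a set $F$ containing a good point is at most $(1+\eps)\zeta^s(F)$ — but this requires $\H^s(X) < \infty$, so one first reduces to that case by exhausting $X$ by sets of finite measure (on the complement of a $\sigma$-finite exhaustion the statement is handled separately, and this is precisely the maneuver that replaces Davies' theorem with Theorem~\ref{T7}). On the finite-measure part, split each $A_i$ into its intersection with $N$ (measure-zero, contributes nothing to $\int \phi\, d\mu$) and with $X\setminus N$; refine the latter to scales below $\min_i \delta_x$ so that $\H^s(A_i') \le (1+\eps)\zeta^s(A_i')$ for each refined piece $A_i'$, obtaining a step-function majorant $\phi$ of $f$ with $\int_X \phi\, d\mu \le (1+\eps)\sum a_i\zeta^s(A_i) + (\text{error}) \le (1+\eps)(\int^\bullet_X f\, d\H^s + 2\eps)$. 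Letting $\eps\to 0^+$ and then patching the finite-measure pieces together via Lemma~\ref{T1} gives $\int^*_X f\, d\H^s \le \int^\bullet_X f\, d\H^s$, completing the proof.

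\textbf{Main obstacle.} The genuine difficulty is the reverse inequality in the case $\H^s(X) = \infty$ with $f$ not supported on a $\sigma$-finite set — this is exactly the case Federer left open and Davies resolved. The role of Theorem~\ref{T7}, specifically the clean equality $\lambda^s = \H^s$ together with the explicit constant in \eqref{eq21}, is to let one localize to finite-measure pieces \emph{uniformly} across all scales $\delta$, so that the scale-by-scale argument above (using Lemma~\ref{T6} and the $5r$-covering Lemma~\ref{T9}) can be assembled into a global statement without ever confronting the increasing-sets pathology of $\H^s_\delta$ that Davies' theorem was invented to handle. I expect the bookkeeping in the $\H^s(X)=\infty$ reduction — choosing the exhaustion, controlling the leftover, and interchanging the $\delta\to0^+$ limit with the exhaustion limit — to be the technically heaviest part, with everything else following from Lemma~\ref{T2}, Lemma~\ref{T6}, and Theorem~\ref{T7}.
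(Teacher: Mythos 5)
Your outline of the easy inequality \eqref{eq6} is exactly the paper's argument (Lemma~\ref{T2} plus near-optimal $\delta$-coverings of each $A_i$), and your plan for the reverse inequality when the positivity set has finite measure is also essentially the paper's: Borel hull, Lemma~\ref{T6}, restrict a near-optimal weighted covering to the good set, integrate the resulting measurable majorant against $\H^s$, and patch with Lemma~\ref{T1}. One repair is needed there: your step ``refine the latter to scales below $\min_i\delta_x$'' does not work as written, because $\delta_x$ from Lemma~\ref{T6} has no positive lower bound over the good points, so there is no single admissible scale. The paper's device is to introduce the increasing sets $W_j$ of points at which the $(1+\eps)$-estimate holds at scale $1/j$, keep from a near-optimal weighted $1/j$-covering only those (closed) sets meeting $W_j$ — each such set automatically lies in $\bar{B}(x,1/j)$ for some $x\in W_j$, so no refinement of the covering is needed — which gives $\int_X^* f\chi_{W_j}\, d\H^s\leq(1+\eps)\bigl(\int_X^\bullet f\, d\H^s_{1/j}+\eps\bigr)$, and then let $j\to\infty$ via Lemma~\ref{T1}. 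Note also that the relevant finiteness is that of $\{f>0\}$ (via its Borel hull regarded as a metric space), not of $X$, so no exhaustion of $X$ is involved.

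The genuine gap is the case in which $\{f>0\}$ is not $\sigma$-finite. You correctly flag it as the crux, but you never give an argument for it: ``handled separately'' and ``Theorem~\ref{T7} lets one localize to finite-measure pieces uniformly across all scales'' is not how \eqref{eq21} is used, and it is not clear such a localization could work — this is precisely the situation where Federer's method stalls. The actual resolution is short and goes in the opposite direction: if $\{f>0\}$ is not $\sigma$-finite, then $\H^s(\{f\geq t\})=\infty$ for some $t>0$, hence $\H^s_{6\delta}(\{f\geq t\})$ is arbitrarily large for small $\delta$, and \eqref{eq21} gives
$$
\int_X^\bullet f\, d\H^s\ \geq\ t\,\lambda^s_\delta(\{f\geq t\})\ \geq\ (8\cdot 6^s)^{-1}\, t\, \H^s_{6\delta}(\{f\geq t\}),
$$
so the weighted integral is $+\infty$ and the reverse inequality is vacuous. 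In other words, the Nazarov-based comparison \eqref{eq21} replaces Davies' theorem not by enabling uniform localization, but by forcing blow-up of the weighted integral on non-$\sigma$-finite positivity sets; without this step your proposal only covers the cases Federer had already settled. (Your contrary expectation that this reduction is ``the technically heaviest part'' is also off: given \eqref{eq21} it is the easiest of the three cases.) Incidentally, the equality $\lambda^s=\H^s$ in \eqref{eq22}, which you lean on rhetorically, is not what the proof of Theorem~\ref{T5} uses — only the two-sided comparison \eqref{eq21} with its explicit constant enters, in exactly the way displayed above.
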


\begin{remark}
Inequality \eqref{eq21} is stated implicitly in \cite[2.10.24]{federer}, as a step in the proof of Theorem~\ref{T5} (under assumptions (a') or (b')) and the general case follows from the theorem of Davies \cite{davies}, see \cite{howroyd,kelly1,kelly2}.
%that investigate the theory of weighted measures, do not seem to discuss Theorem~\ref{T7}. 
%The only place, where we could find a somewhat explicit statement of inequality \eqref{eq21} is the second inequality on page~81 in \cite{reichel}. For \eqref{eq22}, see also \cite[Theorem~1]{howroyd}.
%However, we could not find explicit statement on equality \eqref{eq22} in the literature.
\end{remark}

\section{Weighted covering theorem}
\label{CT}
The proof of inequality \eqref{eq21} is based on the following weighted covering result that we learned from Nazarov through  MathOverflow \cite{nazarov}. The result is interesting on its own and we believe it will have applications beyond those given in the paper.
\begin{theorem}
\label{T11}
Let $E$ be a bounded and non-empty subset of a metric space. If 
$0\leq b_i<\infty$, $i=1,2,\ldots, N$, are fixed numbers and 
$\{(a_i,B_i)\}_{i=1}^N$ is a finite weighted covering of $E$ by (either all open or all closed) balls i.e.,
\begin{equation}
\label{eq16}
\chi_{E}\leq\sum_{i=1}^N a_i\chi_{B_i},
\quad
a_i\geq 0,
\end{equation}
then there is a subfamily of pairwise disjoint balls $\{B_{i_j}\}_{j=1}^k$ such that
$$
E\subset\bigcup_{j=1}^k 3B_{i_j}
\quad
\text{and}
\quad
\sum_{j=1}^k b_{i_j}\leq 2\sum_{i=1}^N a_ib_i.
$$
\end{theorem}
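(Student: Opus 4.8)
The plan is to run a greedy selection of balls ordered by radius, exactly in the spirit of the $5r$-covering lemma, but controlled by the weights $b_i$ rather than just by inclusion. First I would dispose of degenerate cases: discard any index $i$ with $a_i = 0$ (it contributes nothing to either the covering constraint \eqref{eq16} or to $\sum a_i b_i$), and if some ball $B_i$ has radius $0$, i.e. $B_i = \{x\}$ or $B_i = \varnothing$, handle it by noting that $3B_i = B_i$ and it can simply be thrown into the selected family at the end if its center is not yet covered, just as in the proof of Lemma~\ref{T9}. So assume all $B_i$ have positive radius and all $a_i > 0$. Relabel the balls so that $r_1 \geq r_2 \geq \cdots \geq r_N$ where $r_i$ is the radius of $B_i$. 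Now select greedily: put $B_1$ in the family; having decided on $B_{i_1},\dots,B_{i_{j-1}}$, let $B_{i_j}$ be the first ball in the ordering that is disjoint from all previously selected balls; stop when no such ball remains. This produces a pairwise disjoint subfamily $\{B_{i_j}\}_{j=1}^k$, and by the standard argument (any unselected ball meets some selected ball of radius $\geq$ its own, hence is contained in its $3$-dilate) we get $\bigcup_{i=1}^N B_i \subset \bigcup_{j=1}^k 3B_{i_j}$, and in particular $E \subset \bigcup_{j=1}^k 3B_{i_j}$ using \eqref{eq16} (each point of $E$ lies in some $B_i$ with $a_i>0$).

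The nontrivial part is the weight bound $\sum_{j=1}^k b_{i_j} \leq 2\sum_{i=1}^N a_i b_i$, and this is where Nazarov's idea enters. For each selected ball $B_{i_j}$, pick a point $p_j \in B_{i_j} \cap E$ if such a point exists; if $B_{i_j} \cap E = \varnothing$, then $B_{i_j}$ was genuinely needed only to cover nothing, and in fact I can simply remove such balls from the selected family without destroying $E \subset \bigcup 3B_{i_j}$ — so assume every selected ball meets $E$ and fix $p_j \in B_{i_j}\cap E$. The selected balls are pairwise disjoint, so the points $p_j$ are distinct and each lies in exactly one selected ball. Evaluating \eqref{eq16} at $x = p_j$ gives $1 \leq \sum_{i=1}^N a_i \chi_{B_i}(p_j)$. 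Multiply by $b_{i_j}$ and sum over $j$:
$$
\sum_{j=1}^k b_{i_j} \leq \sum_{j=1}^k b_{i_j} \sum_{i=1}^N a_i \chi_{B_i}(p_j) = \sum_{i=1}^N a_i \sum_{j=1}^k b_{i_j}\chi_{B_i}(p_j).
$$
Thus it suffices to show that for each fixed $i$, $\sum_{j=1}^k b_{i_j}\chi_{B_i}(p_j) \leq 2 b_i$, i.e. that the sum of $b_{i_j}$ over those selected balls $B_{i_j}$ that contain $p_j \in B_i$ is at most $2b_i$. This is the crux: among the selected (disjoint) balls, how many can contain a point of a single ball $B_i$, weighted appropriately?

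To control this, I would split the selected balls meeting $B_i$ (more precisely, those $B_{i_j}$ with $p_j \in B_i$) according to whether $r_{i_j} \geq r_i$ or $r_{i_j} < r_i$. In the first group: if $B_{i_j}$ has radius $\geq r_i$ and contains a point of $B_i$, then $B_i \subset 3B_{i_j}$; but more usefully, if two such selected balls both had radius $\geq r_i$ and both met $B_i$, they would both have radius comparable to or larger than $r_i$ and both pass near $B_i$ — I need to show there can be at most one of them, which follows because in the greedy order the larger radius is chosen first and then $B_i$ itself (and any later ball meeting $B_i$) would be rejected... this needs care, since two disjoint big balls can both clip a small ball $B_i$. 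The clean way Nazarov's argument handles this is different: rather than bounding cardinality, bound the weight directly by comparing $b_{i_j}$ to $b_i$. Here I expect the main obstacle, and the intended resolution is that the inequality $\sum b_{i_j}\chi_{B_i}(p_j)\le 2b_i$ should be read as: each selected ball $B_{i_j}$ with $p_j\in B_i$ satisfies $r_{i_j}\ge r_i$ is impossible for more than... — honestly the factor $2$ strongly suggests the bookkeeping is: at most one selected ball with radius $\ge r_i$ can have its chosen point in $B_i$ (because once it is selected, $B_i$ is disjoint from it is false — rather, any subsequently considered ball meeting it is rejected, and a careful ordering/first-meeting analysis shows uniqueness), contributing $\le b_i$ since... no, $b_{i_j}$ need not be $\le b_i$. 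Therefore the correct reading must be that the $b_i$ are reindexed together with the radii, i.e. the greedy order is by radius and the bound exploits that selected balls meeting $B_i$ with index considered after $i$ are rejected, forcing $p_j \in B_i$ to occur for at most the balls selected before $B_i$ is examined plus one, and monotonicity of something gives the factor $2$. I would therefore carry out the argument by: (1) fixing $i$; (2) observing that the selected balls $B_{i_j}$ with $p_j \in B_i$ and $r_{i_j} \ge r_i$ all contain a common point region near $B_i$ and using disjointness plus a volume-free packing estimate in the ordering to bound their number by $1$; (3) for those with $r_{i_j} < r_i$, they were examined after $B_i$ in the order, so if $B_i$ itself had been selectable at its turn it would have been selected — hence $B_i$ met an earlier selected ball, and I can charge the small balls to that configuration; (4) combining to get the factor $2$. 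The single genuinely hard step is step (2)–(3), the pigeonhole/ordering argument producing the constant $2$; everything else is the routine greedy-covering machinery already used for Lemma~\ref{T9}.
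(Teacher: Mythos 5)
Your greedy selection ordered by radius cannot work, and the gap you acknowledge at the end is not a missing piece of bookkeeping but a genuine failure of the approach: the selection must depend on the coefficients $a_i$ and the weights $b_i$, not only on the geometry. Concretely, let $E=\{x\}$ be a single point, let $B_1$ be a large ball containing $x$ with $a_1=\tfrac{1}{10}$ and $b_1=M$ huge, and let $B_2$ be a small ball containing $x$ with $a_2=\tfrac{9}{10}$ and $b_2=0$. Then $\chi_E\le a_1\chi_{B_1}+a_2\chi_{B_2}$, so \eqref{eq16} holds, and $2\sum_i a_ib_i=M/5$. Your algorithm selects $B_1$ (largest radius) and rejects $B_2$ (it meets $B_1$), so the selected family has total weight $M>M/5$: the desired bound fails by an arbitrarily large factor, while the admissible choice would have been to select $B_2$ alone. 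The same example kills the intermediate claim you reduce to, namely $\sum_{j}b_{i_j}\chi_{B_i}(p_j)\le 2b_i$ for each fixed $i$: taking $i=2$ and $p_1=x$, the left side equals $M$ and the right side equals $0$. So no pigeonhole or ordering refinement of steps (2)--(3) can rescue the argument; the defect is in the selection rule itself.

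The paper's proof (Nazarov's argument) is structured precisely to avoid this. It proceeds by induction on $N$ and first optimizes the coefficients: minimize $\psi(\alpha)=\sum_i\alpha_ib_i$ over the compact set of vectors $\alpha\in[0,1]^N$ with $\sum_i\alpha_i\chi_{B_i}\ge\chi_E$. If some coordinate of the minimizer vanishes, that ball is discarded and the induction hypothesis applies to the remaining $N-1$ balls. If all $\alpha_i>0$, a first-variation argument (Lemma~\ref{T12}: replace $\alpha_{i_1}$ by $\alpha_{i_1}-h$ and multiply the coefficients of the balls meeting $B_{i_1}$ by $1+2h$, which stays admissible) shows that for every $i_1$,
$$
\sum_{\{i:\,B_i\cap B_{i_1}\ne\varnothing\}}\alpha_ib_i\ \ge\ \frac{b_{i_1}}{2}\,.
$$
Only now is the ball of largest diameter selected: its weight is paid for by its neighbors through this inequality, all balls meeting it lie in $3B_{i_1}$, and the induction hypothesis is applied to $E\setminus 3B_{i_1}$ with the balls disjoint from $B_{i_1}$. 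In your example the minimizer is $\alpha_1=0$, $\alpha_2=1$, so the heavy ball is discarded before any geometric selection takes place --- exactly the mechanism your radius-ordered greedy scheme lacks. If you want to salvage your write-up, you should replace the greedy step by this minimization and perturbation argument (or an equivalent weight-aware selection); the covering part of your proof, and the reduction of the degenerate cases, are fine.
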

\begin{remark}
Later, we will apply Theorem~\ref{T11} with $b_i=\zeta^s(B_i)$.
\end{remark}
\begin{proof}
We will prove the result using induction with respect to $N$. More precisely, we will prove that for every $N\in\bbbn$, the statement is true for any set $E$ that is bounded and non-empty and any weighted covering of it with $N$ balls.

It is important to prove the statement for all sets $E$. Proving it for a fixed set $E$ would not work, since the induction hypothesis will be applied to sets different than $E$. Namely, it will be applied to subsets of $E$.

If $N=1$, the claim is obvious, because we have one ball $B_1$ and $a_1\geq 1$. Suppose $N\geq 2$ and the claim is true if the number of balls is less than or equal to $N-1$, we will prove it for $N$ balls.

Let $\{(a_i,B_i)\}_{i=1}^N$ be a weighted covering of $E$ satisfying \eqref{eq16}. 
For $\alpha=(\alpha_1,\ldots,\alpha_N)$, let
$$
W=\Big\{\alpha:\, \alpha_i\geq 0,\ \sum_{i=1}^N \alpha_i\chi_{B_i}\geq \chi_E\Big\},
\quad 
W_c=\Big\{\alpha:\, 1\geq \alpha_i\geq 0,\ \sum_{i=1}^N \alpha_i\chi_{B_i}\geq \chi_E\Big\}.
$$
Let $\psi(\alpha)=\sum_{i=1}^N \alpha_ib_i$. If $\alpha\in W$, then
$$
\alpha\wedge 1=(\min\{\alpha_1,1\},\ldots,\min\{\alpha_N,1\})\in W_c
\quad
\text{and}
\quad
\psi(\alpha\wedge 1)\leq\psi(\alpha)
$$
so $\inf_W\psi=\inf_{W_c}\psi$. Since $W_c$ is compact and non-empty, there is $\alpha\in W_c$ such that
$\psi(\alpha)=\inf_{W_c}\psi=\inf_W\psi$. In particular,
\begin{equation}
\label{eq17}
\sum_{i=1}^N\alpha_ib_i\leq \sum_{i=1}^N a_ib_i.
\end{equation}
If there is $i_o$ such that $\alpha_{i_o}=0$, we are done. Indeed,
$$
\chi_E\leq\sum_{1\leq i\leq N\atop i\neq i_o} \alpha_i\chi_{B_i}
$$
is a weighted covering of $E$ by $N-1$ balls so according to the induction hypothesis, there is a subfamily of pairwise disjoint balls $\{ B_{i_j}\}_{j=1}^k$ such that
$$
E\subset\bigcup_{j=1}^k 3B_{i_j},
\quad
\sum_{j=1}^kb_{i_j}\leq 2\sum_{1\leq i\leq N\atop i\neq i_o} \alpha_i b_i=
2\sum_{i=1}^N \alpha_i b_i \leq 2\sum_{i=1}^N a_i b_i \, .
$$
Therefore, we may assume that $\alpha_i>0$ for all $i\in \{1,\ldots,N\}$.
\begin{lemma}
\label{T12}
If $\alpha\in W_c$ is a minimizer of $\psi$ and $\alpha_i >0$ for all $i$, then for any $i_1\in\{1,\ldots,N\}$, we have
$$
\sum_{\{i:\, B_i\cap B_{i_1}\neq\varnothing\}} 
\alpha_ib_i
\geq\frac{b_{i_1}}{2}\, .
$$
\end{lemma}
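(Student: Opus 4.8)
The plan is to prove the lemma by producing a single explicit competitor $\beta\in W_c$ and invoking the minimality $\psi(\beta)\geq\psi(\alpha)$; the competitor is obtained from $\alpha$ by \emph{deleting} the ball $B_{i_1}$ and \emph{inflating} the weights of the balls that meet $B_{i_1}$ just enough to keep the covering inequality alive. Write $S=\{i:\, B_i\cap B_{i_1}\neq\varnothing\}$. If $b_{i_1}=0$ there is nothing to prove, so assume $b_{i_1}>0$; then $B_{i_1}\neq\varnothing$, for otherwise lowering the $i_1$-th coordinate of $\alpha$ to $0$ would give a point of $W_c$ with strictly smaller $\psi$, contradicting minimality --- in particular $i_1\in S$. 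If $\alpha_{i_1}\geq\tfrac12$ we are already done, since $\sum_{i\in S}\alpha_i b_i\geq\alpha_{i_1}b_{i_1}\geq\tfrac12 b_{i_1}$. So from now on $\alpha_{i_1}<\tfrac12$, and in particular $\alpha_{i_1}<1$.

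Let $M=(1-\alpha_{i_1})^{-1}$ and define
\[
\beta_{i_1}=0,\qquad \beta_i=\min\{M\alpha_i,\,1\}\ \text{ for } i\in S\setminus\{i_1\},\qquad \beta_i=\alpha_i\ \text{ for } i\notin S.
\]
Then $\beta\in[0,1]^N$. To check $\sum_i\beta_i\chi_{B_i}\geq\chi_E$: for $x\in E\setminus B_{i_1}$ the $B_{i_1}$-term is absent and $\beta_i\geq\alpha_i$ for every $i\neq i_1$, so the inequality is preserved. For $x\in E\cap B_{i_1}$, any $B_i$ with $i\neq i_1$ containing $x$ meets $B_{i_1}$, hence $i\in S\setminus\{i_1\}$; so, setting $s_x=\sum_{i\in S\setminus\{i_1\},\,x\in B_i}\alpha_i$, we have $\alpha_{i_1}+s_x=\sum_{i:\,x\in B_i}\alpha_i\geq 1$, i.e. $s_x\geq 1-\alpha_{i_1}$. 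Since $u\mapsto\min\{Mu,1\}$ is concave with value $0$ at the origin, it is superadditive, whence
\[
\sum_{i\in S\setminus\{i_1\},\,x\in B_i}\beta_i\ \geq\ \min\{Ms_x,\,1\}\ \geq\ \min\{M(1-\alpha_{i_1}),\,1\}=1 .
\]
Thus $\beta\in W_c$.

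Minimality gives $\psi(\beta)\geq\psi(\alpha)$; cancelling the common block $\sum_{i\notin S}\alpha_i b_i$ and using $\min\{M\alpha_i,1\}\leq M\alpha_i$, this becomes $M\sum_{i\in S\setminus\{i_1\}}\alpha_i b_i\geq\alpha_{i_1}b_{i_1}+\sum_{i\in S\setminus\{i_1\}}\alpha_i b_i$. Because $M-1=\alpha_{i_1}(1-\alpha_{i_1})^{-1}$ and $\alpha_{i_1}>0$, this rearranges to $\sum_{i\in S\setminus\{i_1\}}\alpha_i b_i\geq(1-\alpha_{i_1})b_{i_1}$; adding $\alpha_{i_1}b_{i_1}$ yields $\sum_{i\in S}\alpha_i b_i\geq b_{i_1}$, which is in fact stronger than the asserted $\tfrac12 b_{i_1}$.

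I expect the one genuinely delicate point to be the verification that $\beta\in W_c$ at points $x\in E\cap B_{i_1}$: it relies on the elementary observation that any ball other than $B_{i_1}$ covering such an $x$ must intersect $B_{i_1}$ and hence lies in $S$, which is precisely what makes the uniform inflation factor $M=(1-\alpha_{i_1})^{-1}$ replace, point by point, the mass $\alpha_{i_1}$ lost by deleting $B_{i_1}$ (at a tight point the coverage drops by $\alpha_{i_1}$ and comes back as $Ms_x\geq M(1-\alpha_{i_1})=1$). The remaining degeneracies --- $b_{i_1}=0$, $B_{i_1}=\varnothing$, $B_{i_1}\cap E=\varnothing$, or $S\setminus\{i_1\}=\varnothing$ --- are all harmless: either the conclusion is immediate, or else $\alpha$ could be decreased at the coordinate $i_1$ without leaving $W_c$, contradicting minimality whenever $b_{i_1}>0$.
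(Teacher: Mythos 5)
Your argument is correct, and it is a genuinely different proof from the one in the paper. The paper establishes the lemma by a first-variation argument: for small $h\in(0,\alpha_{i_1})$ it perturbs the minimizer to $\tilde{\alpha}$ with $\tilde{\alpha}_{i_1}=\alpha_{i_1}-h$ and $\tilde{\alpha}_i=(1+2h)\alpha_i$ for the neighbors of $B_{i_1}$, checks that $\tilde{\alpha}\in W$ (not necessarily $W_c$, which is why the paper needs $\inf_W\psi=\inf_{W_c}\psi$), and extracts the inequality with the factor $\tfrac12$ after cancelling $h$. You instead exhibit a single ``macroscopic'' competitor: delete $B_{i_1}$ outright and inflate the neighboring weights by the exact factor $M=(1-\alpha_{i_1})^{-1}$, truncated at $1$ so that the competitor stays in $W_c$; the verification at points of $E\cap B_{i_1}$ and the estimate $\min\{M\alpha_i,1\}\le M\alpha_i$ then give the cleaner conclusion $\sum_{i\in S}\alpha_i b_i\ge b_{i_1}$, which is stronger than the stated bound (and would even shave constants downstream: Theorem~\ref{T11} with $1$ in place of $2$, Corollary~\ref{T13} with $4$ in place of $8$). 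Two small remarks: the property you invoke for $g(u)=\min\{Mu,1\}$ is \emph{subadditivity} (a concave function with $g(0)=0$ satisfies $g(u+v)\le g(u)+g(v)$, hence $\sum_i g(\alpha_i)\ge g\bigl(\sum_i\alpha_i\bigr)$), not superadditivity --- the inequality you actually use is the correct one, only the label is off; and the degenerate cases you flag are indeed harmless, since a ball always contains its center, so $i_1\in S$, and your main computation covers them anyway.
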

\begin{proof}
Since the sum on the left hand side includes $\alpha_{i_1}b_{i_1}$, the claim is obvious if
$\alpha_{i_1}\geq 1/2$. Therefore, we may assume that $0<\alpha_{i_1}<1/2$. Let $0<h<\alpha_{i_1}$ and define
$$
\tilde{\alpha}_i=
\begin{cases}
\alpha_i & \text{if $B_i\cap B_{i_1}=\varnothing$,}\\
\alpha_i(1+2h) & \text{if $B_i\cap B_{i_1}\neq\varnothing$, $i\neq i_1$,}\\
\alpha_i-h & \text{if $i=i_1$}.
\end{cases}
$$
We claim that
\begin{equation}
\label{eq18}
(\tilde{\alpha}_1,\ldots,\tilde{\alpha}_N)\in W
\quad
\text{i.e.,}
\quad
\sum_{i=1}^N\tilde{\alpha}_i\chi_{B_i}\geq \chi_E.
\end{equation}
If $x\not\in B_{i_1}$, then $\tilde{\alpha}_{i_1}\chi_{B_{i_1}}(x)=\alpha_{i_1}\chi_{B_{i_1}}(x)=0$. Since $\tilde{\alpha}_i\geq \alpha_i$ for all $i\neq i_{1}$, we have
\begin{equation}
\label{eq19}
\sum_{i=1}^N \tilde{\alpha}_i\chi_{B_i}(x)\geq\sum_{i=1}^N\alpha_i\chi_{B_i}(x)\geq\chi_E(x).
\end{equation}
If $x\not\in E$, then $\chi_E(x)=0$ and there is nothing to prove. 

If $x\in E\cap B_{i_1}$, then
$$
1=\chi_E(x)\leq \sum_{i=1}^N \alpha_i\chi_{B_i}(x)=
\alpha_{i_1}+\sum_{\{i:\, i\neq i_1,\ x\in B_i\cap B_{i_1}\}} \alpha_i,
$$
and hence
$$
\sum_{\{i:\, i\neq i_1,\ x\in B_i\cap B_{i_1}\}} \alpha_i\geq 
1-\alpha_{i_1}.
$$
Therefore,
\begin{equation*}
\begin{split}
&\sum_{i=1}^N\tilde{\alpha}_i\chi_{B_i}(x)
=
(\alpha_{i_1}-h)+
\sum_{\{i:\, i\neq i_1,\ x\in B_i\cap B_{i_1}\}}\alpha_i(1+2h)\\
&\geq
(\alpha_{i_1}-h)+(1+2h)(1-\alpha_{i_1})
=
1+h(1-2\alpha_{i_1})>1=\chi_E(x),
\end{split}    
\end{equation*}
where the last inequality is a consequence of $0<\alpha_{i_1}<1/2$. This completes the proof of \eqref{eq18}.

Since $\psi$ attains minimum at $\alpha$, we have
\begin{equation}
\label{eq20}
\sum_{i=1}^N\alpha_ib_i\leq \sum_{i=1}^N\tilde{\alpha}_ib_i.
\end{equation}
Since $\alpha_i=\tilde{\alpha}_i$ if $B_i\cap B_{i_1}=\varnothing$, \eqref{eq20} yields
\begin{equation*}
\begin{split}
&\alpha_{i_1}b_{i_1}
+
\sum_{\{i:\, i\neq i_1,\ B_i\cap B_{i_1}\neq\varnothing\}}
\alpha_ib_i\\
&\leq
(\alpha_{i_1}-h)b_{i_1}+\sum_{\{i:\, i\neq i_1,\ B_i\cap B_{i_1}\neq\varnothing\}}
\alpha_i(1+2h)b_i,
\end{split}
\end{equation*}
and hence
$$
h b_{i_1}\leq 2h
\sum_{\{i:\, i\neq i_1,\ B_i\cap B_{i_1}\neq\varnothing\}}\alpha_i b_i
$$
which finishes the proof of Lemma~\ref{T12}.
\end{proof}

Now we can complete the proof of the theorem. Let $B_{i_1}$ be a ball with the largest diameter and let
$$
I=\{i:\, B_i\cap B_{i_1}\neq\varnothing\}
\quad
\text{and}
\quad
I^c=\{i:\, B_i\cap B_{i_1}=\varnothing\}.
$$
We have
$$
\bigcup_{i\in I} B_i\subset 3B_{i_1}
\quad
\text{and}
\quad
\sum_{i\in I}\alpha_i b_i\geq\frac{b_{i_1}}{2}.
$$
The inclusion is a consequence of the triangle inequality and the fact that
$\diam B_{i_1}\geq\diam B_i$ for $i\in I$, while the inequality follows from Lemma~\ref{T12}.

If $E\setminus 3B_{i_1}=\varnothing$, then \eqref{eq17} yields
$$
E\subset 3B_{i_1}
\quad
\text{and}
\quad
b_{i_1}\leq 2\sum_{i\in I}\alpha_i b_i\leq
2\sum_{i=1}^N\alpha_i b_i \leq 2\sum_{i=1}^N a_i b_i \, ,
$$
and the theorem follows. 

Therefore, we may assume that  $E\setminus 3B_{i_1}\neq\varnothing$. Since the balls $B_i$, $i\in I$ have empty intersection with $E\setminus 3B_{i_1}$,
$$
\sum_{i\in I^c}\alpha_i\chi_{B_i}\geq \chi_{E\setminus 3B_{i_1}}
$$
and hence $\{(\alpha_i,B_i)\}_{i\in I^c}$ is a weighted covering of $E\setminus 3B_{i_1}$ and the number of balls in that covering is less than or equal to $N-1$ (we removed at least one ball: $B_{i_1}$). According to the induction hypothesis, we can select pairwise disjoint balls 
$\{B_{i_j}\}_{j=2}^k$, $i_j\in I^c$ such that
$$
E\setminus 3B_{i_1}\subset\bigcup_{j=2}^k 3B_{i_j}
\quad
\text{and}
\quad
\sum_{j=2}^k b_{i_j}\leq
2\sum_{i\in I^c}\alpha_i b_i \, .
$$
Therefore,
$$
E\subset 3B_{i_1}\cup \bigcup_{j=2}^k 3B_{i_j}=\bigcup_{j=1}^k 3B_{i_j}
$$
(note that $B_{i_1}\cap B_{i_j}=\varnothing$, for $j\geq 2$ so the balls $\{ B_{i_j}\}_{j=1}^k$ are pairwise disjoint) and
$$
\sum_{j=1}^k b_{i_j} = b_{i_1}+\sum_{j=2}^k b_{i_j}\leq
2\sum_{i\in I}\alpha_i b_i +
2\sum_{i\in I^c}\alpha_i b_i
=
2\sum_{i=1}^N\alpha_i b_i \leq \sum_{i=1}^N a_i b_i \, .
$$
The proof is complete.
\end{proof}

\begin{corollary}
\label{T13}
Let $E$ be a non-empty subset of a metric space, $\{ b_i\}_{i=1}^\infty$, a sequence of non-negative numbers, and 
$\{(a_i,B_i)\}_{i=1}^\infty$, a weighted covering of $E$ by (all open or all closed) balls i.e., 
$$
\chi_E\leq\sum_{i=1}^\infty a_i\chi_{B_i},
\quad a_i\geq 0.
$$
Then there is a subfamily of balls $\{ B_{i_j}\}_{j=1}^\infty$ such that
$$
E\subset\bigcup_{j=1}^\infty 3B_{i_j}
\quad
\text{and}
\quad
\sum_{j=1}^\infty b_{i_j}\leq 8\sum_{i=1}^\infty a_i b_i.
$$
\end{corollary}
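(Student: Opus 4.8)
The main idea is this. Suppose $\{(a_i,B_i)\}_{i=1}^\infty$ is a weighted covering of $E$. Without loss of generality we may assume $S:=\sum_{i=1}^\infty a_ib_i<\infty$, for otherwise the conclusion is vacuous. The difficulty is that Theorem~\ref{T11} only handles finitely many balls and, moreover, requires an \emph{exact} covering of the (sub)set under consideration, whereas truncating an infinite covering to its first $N$ terms destroys the covering property. To get around this, I would partition $E$ into the pieces that are covered ``early'' and those covered only ``late'', and handle each piece separately, paying a geometric-series price for the late pieces.

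\textbf{Step 1: dyadic decomposition of $E$ by level of coverage.} For $m\geq 1$ set
$$
E_m=\Big\{x\in E:\ \sum_{i=1}^\infty a_i\chi_{B_i}(x)\leq 2^m\Big\}\setminus\Big\{x\in E:\ \sum_{i=1}^\infty a_i\chi_{B_i}(x)\leq 2^{m-1}\Big\},
$$
together with $E_0=\{x\in E:\ \sum a_i\chi_{B_i}(x)\leq 1\}$, so that $E=\bigcup_{m\geq 0} E_m$ (recall $\sum a_i\chi_{B_i}\geq 1$ on $E$, so every point lies in some $E_m$; if $\sum a_i \chi_{B_i}(x)=\infty$ the point lies in no $E_m$, but such $x$ can be absorbed into $E_0$ after noting that if any single $a_i=\infty$ with $x\in B_i$ then $a_i b_i=\infty$ unless $b_i=0$ --- a bookkeeping point to be handled carefully). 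On $E_m$ the scaled family $\{(2^{-m+1}a_i,B_i)\}$ satisfies $\sum 2^{-m+1}a_i\chi_{B_i}\geq\chi_{E_m}$, i.e. it is an honest (infinite) weighted covering of $E_m$ with total weight $2^{-m+1}\sum a_ib_i$.

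\textbf{Step 2: exhaust each $E_m$ by finite weighted subcoverings.} For fixed $m$, the sets $E_m\cap\big\{\sum_{i\leq N}2^{-m+1}a_i\chi_{B_i}\geq 1\big\}$ increase to $E_m$ as $N\to\infty$; on the $N$-th such set we have a \emph{finite} weighted covering by balls and may invoke Theorem~\ref{T11} (with $b_i$ as given) to extract pairwise disjoint balls whose triples cover it, with total $b$-weight at most $2\cdot 2^{-m+1}\sum a_ib_i$. Taking the union over $N$ (and relabelling to keep the chosen balls, noting disjointness can fail across different $N$ --- this is where one must be a little careful, e.g. by a diagonal/limiting argument or by arguing that the uniform weight bound lets one pass to a countable disjoint subfamily) produces a countable family $\{B_{i_j}\}_{j\in J_m}$, \emph{not necessarily disjoint}, with $E_m\subset\bigcup_{j\in J_m}3B_{i_j}$ and $\sum_{j\in J_m}b_{i_j}\leq 4\cdot 2^{-m}\sum a_ib_i$. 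Then $\bigcup_m\bigcup_{j\in J_m}3B_{i_j}\supset E$ and $\sum_m\sum_{j\in J_m}b_{i_j}\leq 4S\sum_{m\geq 0}2^{-m}=8S$, which is exactly the asserted bound $8\sum a_ib_i$.

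\textbf{The main obstacle} is reconciling the \emph{disjointness} requirement with the passage from finite to countable: Theorem~\ref{T11} hands back disjoint balls at each finite stage and within each level $m$, but the union over stages and over levels $m$ need not be disjoint. Fortunately Corollary~\ref{T13} does \emph{not} claim disjointness of $\{B_{i_j}\}_{j=1}^\infty$ --- only the covering inclusion $E\subset\bigcup 3B_{i_j}$ and the weight bound --- so I would emphasise that the disjointness of the subfamilies is used \emph{internally} (to control weights via Theorem~\ref{T11}) but is discarded in the final statement. The remaining technical care goes into (i) the points where $\sum a_i\chi_{B_i}=+\infty$ or some $a_i=\infty$, and (ii) ensuring the level-$m$ finite extractions can be amalgamated into a single countable family without losing the uniform $O(2^{-m})$ weight control; a clean way is, for each $m$, to first replace the infinite covering of $E_m$ by a finite one covering all but an arbitrarily $b$-light tail and then apply Theorem~\ref{T11} once, summing a further geometric series if desired.
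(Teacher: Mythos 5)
Your high-level strategy --- decompose $E$, feed finite weighted coverings of the pieces to Theorem~\ref{T11}, pay a geometric series, and discard disjointness at the end --- is the right one, but the decomposition you chose does not actually achieve the reduction to the finite case, and the place where you claim it does is a genuine gap. Stratifying $E$ by the size of $\sum_i a_i\chi_{B_i}(x)$ leaves every piece $E_m$ with an \emph{infinite} weighted covering $\{(2^{-m+1}a_i,B_i)\}_{i=1}^\infty$, so Theorem~\ref{T11} still does not apply. Your Step 2 tries to repair this by exhausting $E_m$ with the sets $E_m^N=E_m\cap\{\sum_{i\le N}2^{-m+1}a_i\chi_{B_i}\ge 1\}$, applying Theorem~\ref{T11} for each $N$, and then ``taking the union over $N$''. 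As stated this fails: the selections produced by Theorem~\ref{T11} for different $N$ are unrelated (not nested, with no stability as $N$ grows), so there is no diagonal or limiting argument to pass to, and simply uniting the families over all $N$ replaces the uniform bound $4\cdot 2^{-m}S$ by the divergent sum $\sum_N 4\cdot 2^{-m}S$. Your alternative suggestion (cover all of $E_m$ except a $b$-light tail by finitely many terms) pushes the same problem one level down: a point of $E_m$ missed by the first $N$ terms is only known to receive \emph{some} positive coverage from the tail, not coverage bounded below by a fixed constant, and even after adjusting thresholds the leftover set again carries only an infinite weighted covering, forcing an infinite iteration whose bookkeeping is exactly what is missing. (Note also that $E_0$, where $\sum_i a_i\chi_{B_i}(x)$ can equal $1$ exactly, is not exhausted by the sets $E_0^N$ at all.)

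The paper closes precisely this gap with a different decomposition: it partitions the \emph{index set} rather than stratifying $E$ by depth of coverage. Choose $0=N_0<N_1<N_2<\cdots$ so that the $k$-th block satisfies $\sum_{i=N_k+1}^{N_{k+1}}a_ib_i\le 4^{-k}M$, where $M=\sum_i a_ib_i<\infty$, and let $E_k$ be the set of $x\in E$ with $\sum_{i=N_k+1}^{N_{k+1}}a_i\chi_{B_i}(x)\ge 2^{-(k+1)}$. Since $\sum_{k\ge 0}2^{-(k+1)}=1\le\sum_i a_i\chi_{B_i}(x)$ for $x\in E$, every point of $E$ lies in some $E_k$; and each $E_k$ comes equipped from the start with a genuinely \emph{finite} weighted covering $\{(2^{k+1}a_i,B_i)\}_{i=N_k+1}^{N_{k+1}}$, to which Theorem~\ref{T11} applies with cost $2\cdot 2^{k+1}\cdot 4^{-k}M=4\cdot 2^{-k}M$; summing over $k$ gives $8M$. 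The point is that the scale-up factor $2^{k+1}$ is beaten by the block weight $4^{-k}M$, and finiteness is built in rather than recovered by a limit. If you replace your Steps 1--2 by this block decomposition, the rest of your write-up (including your correct observation that disjointness is only needed internally, inside each application of Theorem~\ref{T11}) goes through.
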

\begin{remark}
Differently than in Theorem~\ref{T11}, we do not assume that the balls $\{ B_{i_j}\}_{j=1}^\infty$ are pairwise disjoint.
\end{remark}
\begin{proof}
If $\sum_{i=1}^\infty a_ib_i=+\infty$, the claim is obvious. Therefore, we may assume that
$
M:=\sum_{i=1}^\infty a_ib_i<\infty.
$
We divide the series into finite blocks such that
$$
\sum_{i=1}^\infty a_i b_i = \sum_{k=0}^\infty \Big(\underbrace{\sum_{i=N_k+1}^{N_{k+1}} a_i b_i}_{\leq 4^{-k}M}\Big),
\quad
0=N_0<N_1<N_2<\ldots
$$
Let
$$
E_k=\left\{x\in E:\, \sum_{i=N_k+1}^{N_{k+1}} 2^{k+1}a_i\chi_{B_i}(x)\geq 1\right\}.
$$
Observe that $E=\bigcup_{k=0}^\infty E_k$. Indeed, if $x\in E$, then
$$
\sum_{k=0}^\infty\Big(\sum_{i=N_k+1}^{N_{k+1}} a_i\chi_{B_i}(x)\Big)=\sum_{i=1}^\infty a_i\chi_{B_i}(x)\geq \chi_E(x)=1=\sum_{k=0}^\infty 2^{-(k+1)}. 
$$
Therefore, there is $k$ such that
$$
\sum_{i=N_k+1}^{N_{k+1}} a_i\chi_{B_i}(x)\geq 2^{-(k+1)},
\quad
\text{so}
\quad x\in E_k.
$$
By the definition of $E_k$, the family $\{(2^{k+1}a_i,B_i)\}_{i=N_k+1}^{N_{k+1}}$ is a finite weighted covering of $E_k$. According to Theorem~\ref{T11}, we can select pairwise disjoint balls
$\{B_{i_j}^{(k)}\}_{j=1}^{\ell_k}$ from $\{ B_i\}_{i=N_k+1}^{N_{k+1}}$ so that
$$
E_k\subset\bigcup_{j=1}^{\ell_k} 3B_{i_j}^{(k)}
\quad
\text{and}
\quad
\sum_{j=1}^{\ell_k} b_{i_j}^{(k)}\leq
2\sum_{i=N_k+1}^{N_{k+1}} 2^{k+1}a_i b_i<4\cdot 2^{-k}M.
$$
To be more precise, we select this family of balls only if $E_k\neq\varnothing$. If $E_k=\varnothing$, we select empty family of balls.

If we relabel balls as
$$
\{B_{i_j}^{(k)}:\, k\in\bbbn\cup\{ 0\},\ 1\leq j\leq\ell_k\}:=\{ B_{i_j}\}_{j=1}^\infty,
$$
then
$$
E=\bigcup_{k=0}^\infty E_k\subset\bigcup_{k=0}^\infty\bigcup_{j=1}^{\ell_k} 3B_{i_j}^{(k)} =
\bigcup_{j=1}^\infty 3B_{i_j},
$$
and
$$
\sum_{j=1}^\infty b_{i_j} =\sum_{k=0}^\infty\sum_{j=1}^{\ell_k} b_{i_j}^{(k)}\leq\sum_{k=0}^\infty 4\cdot 2^{-k}M=8M=8\sum_{i=1}^\infty a_i b_i.
$$
\end{proof}

\section{Proof of Theorem~\ref{T7}}
\label{3.11}
First we will prove \eqref{eq21}. Note that the inequality $\lambda_\delta^s(E)\leq \H_\delta^s(E)$ is obvious and follows upon taking weighted coverings with coefficients $a_i=1$ so it remains to prove that
$\H^s_{6\delta}(E)\leq 8\cdot 6^s\lambda_\delta^s(E)$.

Let $\{(a_i,A_i)\}_{i=1}^\infty$ be a weighted $\delta$-covering of $E$,
$$
\chi_E\leq\sum_{i=1}^\infty a_i\chi_{A_i},
\quad
a_i\geq 0,
\quad
\diam A_i\leq\delta.
$$
Each of the sets $A_i$ is contained in a closed ball $B_i$ of radius $\diam A_i$. Hence
$$
\diam (3B_i)\leq 6\diam A_i\leq 6\delta
\quad
\text{so}
\quad
\zeta^s(3B_i)\leq 6^s\zeta^s(A_i).
$$
Since $\{ (a_i,B_i)\}_{i=1}^\infty$ is also a weighted cover of $E$, Corollary~\ref{T13}  with
$b_i=\zeta^s(A_i)$ yields a subfamily $\{ B_{i_j}\}_{j=1}^\infty$ of balls such that
$$
E\subset \bigcup_{j=1}^\infty 3B_{i_j}
\quad
\text{and}
\quad
\sum_{j=1}^\infty \zeta^s(A_{i_j})\leq 8\sum_{i=1}^\infty a_i\zeta^s(A_i).
$$
Therefore,
$$
\H^s_{6\delta}(E)\leq \sum_{j=1}^\infty
\zeta^s(3B_{i_j})\leq
6^s\sum_{j=1}^\infty \zeta^s(A_{i_j})\leq
8\cdot 6^s\sum_{i=1}^\infty a_i\zeta^s(A_i)
$$
and taking the infimum over all weighted $\delta$-coverings 
$\{(a_i,A_i)\}_{i=1}^\infty$ of $E$ proves that $\H^s_{6\delta}(E)\leq 8\cdot 6^s\lambda_\delta^s(E)$ and completes the proof of \eqref{eq21}.

Passing to the limit in \eqref{eq21} as $\delta\to 0^+$ yields
$$
(8\cdot 6^s)^{-1}\H^s(E)\leq\lambda^s(E)\leq \H^s(E).
$$
This proves \eqref{eq22} when $\H^s(E)=\infty$. Therefore, it remains to prove
\begin{equation}
\label{eq39}    
\H^s(E)\leq\lambda^s(E)
\quad
\text{assuming that $\H^s(E)<\infty$.}
\end{equation}
Let $\tilde{E}$ be a Borel set such that $E\subset\tilde{E}$ and $\H^s(\tilde{E})=\H^s(E)$. 

Fix $\eps>0$. For each $j\in\bbbn$, let $W_j$ be the set of points $x\in\tilde{E}$ such that
$$
x\in F\subset\bar{B}(x,1/j)
\quad
\Longrightarrow
\quad
\H^s(\tilde{E}\cap F)\leq (1+\eps)\zeta^s(F).
$$
Note that $W_1\subset W_2\subset\ldots$ and Lemma~\ref{T6} applied to $\tilde{E}$ regarded as a metric space yields (because $\H^s(\tilde{E})<\infty$)
$$
\H^s\Big(\tilde{E}\setminus\bigcup_{j=1}^\infty W_j\Big)=0.
$$
Therefore, Lemma~\ref{T34} implies
$$
\H^s(E)\leq \H^s\Big(E\cap\bigcup_{j=1}^\infty W_j\Big)+
\underbrace{\H^s\Big(E\setminus\bigcup_{j=1}^\infty W_j\Big)}_{0}=
\lim_{j\to\infty} \H^s(E\cap W_j).
$$
It remains to show that
\begin{equation}
\label{eq42}
\H^s(E\cap W_j)\leq (1+\eps)(\lambda_{1/j}^s(E)+\eps)
\end{equation}
as passing to the limit as $j\to\infty$ and then as $\eps\to 0^+$ will imply \eqref{eq39}.

Fix $j\in\bbbn$. Let $\{(a_k,A_k)\}_{k=1}^\infty$ be a weighted $1/j$-covering of $E$ by closed sets such that
\begin{equation}
\label{eq40}
\sum_{k=1}^\infty a_k\zeta^s(A_k)\leq\lambda_{1/j}^s(E)+\eps.
\end{equation}
Let $I=\{k:\, W_j\cap A_k\neq\varnothing\}$. We have
$$
\chi_{E\cap W_j}\leq\sum_{k\in I} a_k\chi_{\tilde{E}\cap A_k}.
$$
Let
$$
Z=\Big\{x:\, \sum_{k\in I} a_k\chi_{\tilde{E}\cap A_k}(x)\geq 1\Big\}.
$$
The set $Z$ is Borel, $E \cap W_j \subset Z$, and
$$
\chi_Z\leq \sum_{k\in I} a_k\chi_{\tilde{E}\cap A_k}\, .
$$
Integrating this inequality with respect to $\H^s$ yields
$$
\H^s(E\cap W_j)\leq\H^s(Z)\leq \sum_{k\in I} a_k\H^s(\tilde{E}\cap A_k).
$$
If $k\in I$, then there is $x\in W_j\cap A_k$ and hence
$$
x\in A_k\subset\bar{B}(x,1/j)
\quad
\text{so}
\quad
\H^s(\tilde{E}\cap A_k)\leq (1+\eps)\zeta^s(A_k)
$$
by the definition of the set $W_j$. Therefore,
$$
\H^s(E\cap W_j)\leq (1+\eps)
\sum_{k\in I} a_k\zeta^s(A_k)\leq
(1+\eps)(\lambda_{1/j}^s(E)+\eps),
$$
where the last inequality follows from \eqref{eq40}.
This proves \eqref{eq42} and completes the proof of the theorem.

\hfill $\Box$

\section{Proof of Theorem~\ref{T5}}
\label{3.13}
We first prove the following easier inequality 
\begin{equation}
\label{eq6}
\int^\bullet_X f \ d\mathcal{H}^s \leq \int^*_X f \ d\mathcal{H}^s \, .
\end{equation}
To this end it suffices to prove that for any $\delta>0$
\begin{equation}
\label{eq31}
\int^\bullet_X f \ d\mathcal{H}^s_\delta \leq \int^*_X f \ d\mathcal{H}^s \, ,
\end{equation}
as \eqref{eq6} will follow upon passing to the limit as $\delta\to 0^+$.
Assume that the right-hand side of \eqref{eq31} is finite. Given $\eps>0$, it follows from Lemma~\ref{T2} that there is a step function
$$
f\leq\sum_{i=1}^\infty a_i\chi_{A_i},
\quad 
a_i>0
$$
such that
$$
\sum_{i=1}^\infty a_i\H^s_\delta(A_i)\leq\sum_{i=1}^\infty a_i\H^s(A_i)\leq\int_X^*f\, d\H^s+\frac{\eps}{2}.
$$
For each $i$, there is a $\delta$-covering
$A_i\subset\bigcup_{j=1}^\infty A_{ij}$,
% $\diam A_{ij}\leq\delta$
satisfying
$$
\sum_{j=1}^\infty\zeta^s(A_{ij})<\H^s_\delta(A_i)+ \frac{\eps}{2^{i+1}a_i}.
$$
Then with $a_{ij}=a_i$,
$$
f\leq\sum_{i=1}^\infty a_i \chi_{A_i}
\leq\sum_{i=1}^\infty a_i\Big(\sum_{j=1}^\infty \chi_{A_{ij}}\Big)=\sum_{i,j=1}^\infty a_{ij}\chi_{A_{ij}}
$$
so
$\{(a_{ij},A_{ij})\}_{i,j=1}^\infty$ is a weighted $\delta$-covering of $f$ and hence 
\begin{equation*}
\begin{split}
&\int_X^\bullet f\, d\H_\delta^s 
\leq 
\sum_{i,j=1}^\infty a_{ij}\zeta^s(A_{ij})=
\sum_{i=1}^\infty a_i\Big(\sum_{j=1}^\infty\zeta^s(A_{ij})\Big)\\
&<
\sum_{i=1}^\infty a_i\Big(\H^s_\delta(A_i)+\frac{\eps}{2^{i+1}a_i}\Big)
=
\sum_{i=1}^\infty a_i\H_\delta^s(A_i)+\frac{\eps}{2}\leq
\int_X^* f\, d\H^s+\eps.
\end{split}    
\end{equation*}
Since $\eps>0$ was chosen arbitrarily, \eqref{eq31} and hence \eqref{eq6} follow.

Now we must prove the reverse inequality
\begin{equation}
\label{eq9}
 \int_X^* f \, d\H^s \leq \int_X^\bullet f \, d\H^s\, .
\end{equation}
Clearly, it is important to consider the set
$A = \{x \in X: f(x) > 0 \}$, where the function $f$ is positive. We will split the proof into three cases. We shall also assume that the right-hand side in \eqref{eq9} is finite.

\noindent
{\sc Case 1.} $\H^s(A)<\infty$.

This case is similar to the proof of \eqref{eq22}.
Let $\eps>0$ be given. 
According to Theorem~\ref{T33},
there is a Borel set $\tilde{A}$ such that $A\subset\tilde{A}$ and $\H^s(A)=\H^s(\tilde{A})$. Applying Lemma~\ref{T6} to $\tilde{A}$ regarded as a metric space, we have that there is a set $E\subset\tilde{A}$, $\H^s(E)=0$, such that
$$
\forall\ x\in\tilde{A}\setminus E\ \ \
\exists\ \delta_x>0\ \ 
\forall\ F\subset X\ \ 
(x\in F\subset \bar{B}(x,\delta_x)\ \Rightarrow\ \H^s(\tilde{A}\cap F)\leq (1+\eps)\zeta^s(F)).
$$
Let $W_j\subset\tilde{A}$ be the set of points $x \in \tilde{A} $ such that
\begin{equation}
\label{eq11}    
x\in F\subset\bar{B}(x,1/j)
\quad
\Longrightarrow
\quad
\H^s(\tilde{A}\cap F)\leq (1+\eps)\zeta^s(F).
\end{equation}
Clearly, $W_1\subset W_2\subset\ldots$ and
$$
\tilde{A}=E\cup\bigcup_{j=1}^\infty W_j,
\quad
\H^s(E)=0.
$$
It suffices to prove that for each $j$, we have
\begin{equation}
\label{eq12}
\int_X^* f\chi_{W_j}\, d\H^s\leq
(1+\eps)\left(\int_X^\bullet f\, d\H^s_{1/j}+\eps\right),
\end{equation}
because, \eqref{eq9} will follow from Lemma~\ref{T1} upon passing to the limit, first as $j\to\infty$, and then as $\eps\to 0^+$.

According to the definition of the weighted integral and Remark~\ref{R1}, for each $j$ there is a weighted $1/j$-covering
$$
f(x)\leq\sum_{k=1}^\infty a_k\chi_{A_{jk}}(x),
\quad
\text{$A_{jk}$-closed,}
\quad
\diam A_{jk}\leq \frac{1}{j}
$$
such that
$$
\sum_{k=1}^\infty a_k\zeta^s(A_{jk})\leq \int_X^\bullet f\, d\H^s_{1/j}+\eps.
$$
Let $I=\{k:\, W_j\cap A_{jk}\neq\varnothing\}$. We have
$$
f\chi_{W_j}\leq\sum_{k\in I} a_k\chi_{\tilde{A}\cap A_{jk}}
$$
and measurability of the right hand side yields
$$
\int_X^* f\chi_{W_j}\, d\H^s\leq\sum_{k\in I} a_k\H^s(\tilde{A}\cap A_{jk})\leq\heartsuit.
$$
If $k\in I$, and $x\in W_j\cap A_{jk}$, then $x\in A_{jk}\subset\bar{B}(x,1/j)$ so \eqref{eq11} yields
$$
\heartsuit\leq
(1+\eps)\sum_{k\in I} a_k\zeta^s(A_{jk})\leq (1+\eps)\Big(\int_X^\bullet f\, d\H^s_{1/j}+\eps\Big).
$$
This completes the proof of \eqref{eq12}.

\noindent
{\sc Case 2.} {\em $A=\bigcup_{i=1}^\infty A_i$, where $\H^s(A_i)<\infty$.}

By replacing  $A_i $ with $\bigcup_{1 \leq j \leq i} {A}_j$, we can assume further that $A_1 \subset A_2 \subset \ldots$ Since $\H^s(\{x: (f\chi_{A_i})(x)>0\})<\infty$, inequality \eqref{eq9} follows from Case~1 applied to $f\chi_{A_i}$ and from Lemma~\ref{T1}:
$$
\int_X^*f\, d\H^s\stackrel{i\to\infty}{\xleftarrow{\hspace*{0.8cm}}} 
\int_X^*f\chi_{A_i}\, d\H^s\leq\int_X^\bullet f\chi_{A_i}d\, \H^s
\leq
\int_X^\bullet f\, d\H^s.
$$

\noindent
{\sc Case 3.} {\em The measure $\H^s$ of the set $A$ is not $\sigma$-finite.}

In order to prove inequality \eqref{eq9}, it suffices to show that
\begin{equation}
\label{eq10}
\int_X^\bullet f \, d\H^s= \infty \, .
\end{equation} 
To prove this, we will use Theorem~\ref{T7}.
Since the $\H^s$ measure of the set $\{ f>0\}$ is not $\sigma$-finite, there is $t>0$ such that $\H^s(\{f\geq t\})=\infty$. Therefore, for every $M>0$, there is $\delta>0$ such that
$$
\H_{6\delta}^s(\{x\in X:\, f(x)\geq t\})>M
$$
so Theorem~\ref{T7} yields ($C=8\cdot 6^s$):
$$
\int_X^\bullet f\, d\H^s \geq 
\int_X^\bullet t \chi_{\{f\geq t\}}\, d\H^s_\delta = t\lambda_\delta^s(\{ f\geq t\}) \geq C^{-1}t\H^s_{6\delta}(\{ f\geq t\}) \geq  C^{-1}tM \, , 
$$
and \eqref{eq10} follows.
The proof of Theorem~\ref{T5} (and hence that of Theorem~\ref{T15}) is complete.

\section{Generalized Coarea Inequality}
\label{NC}

The next result is a generalization of Theorem~\ref{T14} and it is motivated by the results in \cite{azzams,davids,HKK}. Recall that $\Phi^{s,t}$ was defined in Definition~\ref{D1}.

\begin{theorem}
\label{T20}
If $f:X\to Y$ is a uniformly continuous map between metric spaces, $0\leq t\leq s<\infty$ and $E\subset X$, then
%\begin{equation}
%\label{eq30}
$$
\int_Y^* \H^{s-t}(f^{-1}(y)\cap E)\, d\H^t(y)\leq
\Phi^{t,s-t}(f,E).
$$
%\end{equation}
\end{theorem}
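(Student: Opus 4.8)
The plan is to combine the two main ingredients already assembled in the paper: Lemma~\ref{T17}, which gives the inequality with the \emph{weighted integral} on the left, namely
$$
\lim_{\delta\to 0^+}\int_Y^\bullet \H^{s-t}_\delta(f^{-1}(y)\cap E)\, d\H^t(y)\leq \Phi^{t,s-t}(f,E),
$$
and Theorem~\ref{T5}, which identifies the weighted integral with the upper integral for \emph{every} function on an arbitrary metric space. Since these deep pieces are in hand, the proof should be short.

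First I would fix $\delta_o>0$ and apply Theorem~\ref{T5} to the nonnegative function $y\mapsto \H^{s-t}_{\delta_o}(f^{-1}(y)\cap E)$ on the metric space $Y$, which gives
$$
\int_Y^* \H^{s-t}_{\delta_o}(f^{-1}(y)\cap E)\, d\H^t(y)=\int_Y^\bullet \H^{s-t}_{\delta_o}(f^{-1}(y)\cap E)\, d\H^t(y).
$$
Next I would bound the right-hand side by $\Phi^{t,s-t}(f,E)$. This is exactly what the proof of Lemma~\ref{T17} establishes en route to \eqref{eq25}: for each fixed outer scale $\delta_o$ one has $\int_Y^\bullet \H^{s-t}_{\delta_o}(f^{-1}(y)\cap E)\, d\H^t(y)\leq \Phi^{t,s-t}(f,E)$ (indeed the displayed intermediate conclusion in that proof is precisely this statement before $\delta_o$ is sent to $0$). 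Combining the two gives
$$
\int_Y^* \H^{s-t}_{\delta_o}(f^{-1}(y)\cap E)\, d\H^t(y)\leq \Phi^{t,s-t}(f,E)\qquad\text{for every }\delta_o>0.
$$

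Finally I would let $\delta_o\to 0^+$. The functions $g_{\delta_o}(y)=\H^{s-t}_{\delta_o}(f^{-1}(y)\cap E)$ increase pointwise as $\delta_o$ decreases, with pointwise limit $\H^{s-t}(f^{-1}(y)\cap E)$ by definition of Hausdorff measure. The monotone convergence theorem for upper integrals, Lemma~\ref{T1}, then yields
$$
\int_Y^* \H^{s-t}(f^{-1}(y)\cap E)\, d\H^t(y)=\lim_{\delta_o\to 0^+}\int_Y^* \H^{s-t}_{\delta_o}(f^{-1}(y)\cap E)\, d\H^t(y)\leq \Phi^{t,s-t}(f,E),
$$
which is the claim. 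The only genuinely substantive inputs are Lemma~\ref{T17} and Theorem~\ref{T5}; the main subtlety to handle carefully is the interchange of limits — one must keep the outer scale $\delta_o$ fixed while invoking Theorem~\ref{T5} and the interior estimate of Lemma~\ref{T17}, and only afterwards pass $\delta_o\to 0^+$ via Lemma~\ref{T1}. No obstacle beyond this bookkeeping is expected; in particular no measurability of the integrand is needed since the upper integral is used throughout.
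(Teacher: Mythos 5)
Your proposal is correct and is essentially the paper's own proof: the paper deduces Theorem~\ref{T20} exactly by combining Lemma~\ref{T17} (indeed its intermediate fixed-$\delta_o$ estimate, which also follows from the monotonicity of $\delta\mapsto\H^{s-t}_\delta$), Theorem~\ref{T5} to replace the weighted integral by the upper integral, and the monotone convergence Lemma~\ref{T1} to let $\delta_o\to 0^+$. Your care about keeping $\delta_o$ fixed before passing to the limit is exactly the right bookkeeping and matches the paper's intended argument.
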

\begin{proof}
It follows immediately from Lemma~\ref{T17}, Theorem~\ref{T5} and Lemma~\ref{T1}.
\end{proof}

\begin{proof}[Proof of Theorem~\ref{T14}]
Theorem~\ref{T20} and Lemma~\ref{T16} imply inequality \eqref{eq29}
and it remains to show measurability of the function \eqref{eq28} under the assumptions that $X$ is boundedly compact, $E$ is $\H^s$-measurable and $\H^s(E)<\infty$.

This fact is standard, but for the sake of completeness we will provide a short proof.
Since bounded and closed sets are compact, Lemma~\ref{T36} implies existence of a decomposition
$$
E=N\cup\bigcup_{i=1}^\infty K_i,
\quad
\H^s(N)=0,
\quad
\text{$K_1\subset K_2\subset\ldots$ compact sets.}
$$
It follows from \eqref{eq29} that
$\H^{s-t}(f^{-1}(y)\cap N)=0$ for $\H^t$-almost every $y\in Y$ so for almost all $y\in Y$ we have
$$
\H^{s-t}(f^{-1}(y)\cap E)=\H^{s-t}\Big(f^{-1}(y)\cap\bigcup_{i=1}^\infty K_i\Big)
= \lim_{i\to\infty} \H^{s-t}(f^{-1}(y)\cap K_i).
$$
Therefore it remains to show measurability of the function $y\mapsto\H^{s-t}(f^{-1}(y)\cap K)$, where $K\subset X$ is a compact set. To this end it suffices to prove measurability of the sets 
$$
Y_u=\{y\in Y:\, \H^{s-t}(f^{-1}(y)\cap K)\leq u\},
\quad 
u\in\bbbr.
$$
If $u<0$, $Y_u=\varnothing$ so we may assume that $u\geq 0$. 

Recall that in Section~\ref{HM} the content $\mathscr{H}_\delta^{s-t}$ was defined with  open sets. Since it defines the standard Hausdorff measure, we have
$$
Y_u=\bigcap_{j=1}^\infty\Big\{y\in Y:\, \mathscr{H}^{s-t}_{1/j}(f^{-1}(y)\cap K)<u+\frac{1}{j}\Big\}
$$
so it suffices to show that the sets of the form
$$
V=\{y\in Y:\, \mathscr{H}^{s-t}_{\delta}(f^{-1}(y)\cap K)<v\}
$$
are open (for $v$ and $\delta$ positive values). To this end it suffices to show that if $y\in V$ and $y_k\to y$, then $y_k\in V$ for sufficiently large $k$.
For $y\in V$ fix an open covering
$$
f^{-1}(y)\cap K\subset\bigcup_{j=1}^\infty U_{j},
\quad
\diam U_j<\delta,
\quad
\sum_{j=1}^\infty\zeta^{s-t}(U_j)<v.
$$
Using a standard compactness argument, it follows that there exists a $k_0$ such that $f^{-1}(y_k)\cap K\subset\bigcup_{j=1}^\infty U_{j}$ for $k\geq k_o$ and hence 
$\mathscr{H}_\delta^{s-t}(f^{-1}(y_k)\cap K)<v$, proving that $y_k\in V$ for $k\geq k_o$.
\end{proof}

\subsection{The lower density and doubling spaces}
\label{123}
Throughout Section~\ref{123}, $X$ and $Y$ will denote metric spaces.
In this section we will improve Theorem~\ref{T20} under the assumption that the Hausdorff measure on $X$ is doubling. The main result of this section, Theorem~\ref{T30}, is closely related to the coarea formula, see Corollary~\ref{T28} and Remark~\ref{R7}.
\begin{definition}
For an arbitrary map $f:E\to Y$, $E\subset X$, 
$s\in (0,\infty)$, 
$t\in [0,\infty)$ and $\delta\in (0,\infty]$ we define
$$
\tilde{\H}^{s,t}_\delta(f,E)=\inf\sum_{i=1}^\infty \H^s_\infty(f(A_i)) \zeta^t(A_i),
$$
where the infimum is taken over all $\delta$-coverings $\{ A_i\}_{i=1}^\infty$ of $E$. If no such covering exists then $\tilde{\H}^{s,t}_\delta(f,E) = \infty$.
\end{definition}
The following elementary observation will be useful.
\begin{lemma}
\label{T27}
For any map $f:E\to Y$, $E\subset X$, $s\in (0,\infty)$, $t\in [0,\infty)$ and $\delta\in (0,\infty]$ we have
$$
\Phi^{s,t}_\delta(f,E)=\tilde{\H}^{s,t}_\delta(f,E).
$$
\end{lemma}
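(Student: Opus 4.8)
The plan is to show the two quantities agree by showing each infimum dominates the other. Recall that
$$
\Phi^{s,t}_\delta(f,E)=\inf\sum_{i=1}^\infty \zeta^s(f(A_i))\,\zeta^t(A_i),
\qquad
\tilde{\H}^{s,t}_\delta(f,E)=\inf\sum_{i=1}^\infty \H^s_\infty(f(A_i))\,\zeta^t(A_i),
$$
both infima taken over the same collection of objects, namely all $\delta$-coverings $\{A_i\}_{i=1}^\infty$ of $E$. Since for any bounded set $B$ we have $\H^s_\infty(B)\leq\zeta^s(B)$ (the one-element covering $\{B\}$ is admissible in the definition of $\H^s_\infty$), each summand in the definition of $\tilde{\H}^{s,t}_\delta$ is no larger than the corresponding summand in the definition of $\Phi^{s,t}_\delta$, for the \emph{same} covering $\{A_i\}$. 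Hence $\tilde{\H}^{s,t}_\delta(f,E)\leq\Phi^{s,t}_\delta(f,E)$ immediately.

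For the reverse inequality, $\Phi^{s,t}_\delta(f,E)\leq\tilde{\H}^{s,t}_\delta(f,E)$, I would start from a near-optimal $\delta$-covering $\{A_i\}_{i=1}^\infty$ of $E$ with $\sum_i \H^s_\infty(f(A_i))\zeta^t(A_i)<\tilde{\H}^{s,t}_\delta(f,E)+\eps$ (assuming the right-hand side is finite; otherwise there is nothing to prove), and then refine it. For each fixed $i$, pick a covering $f(A_i)\subset\bigcup_{j=1}^\infty V_{ij}$ by bounded sets with $\sum_j\zeta^s(V_{ij})<\H^s_\infty(f(A_i))+\eps\,2^{-i}/\zeta^t(A_i)$ (with the obvious convention if $\zeta^t(A_i)=0$). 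Now set $A_{ij}=A_i\cap f^{-1}(V_{ij})$. Then $\{A_{ij}\}_{i,j}$ is still a $\delta$-covering of $E$ (each $A_{ij}\subset A_i$ so $\diam A_{ij}\leq\delta$, and $\bigcup_{i,j}A_{ij}=\bigcup_i A_i\supset E$), and $f(A_{ij})\subset V_{ij}$, so $\zeta^s(f(A_{ij}))\leq\zeta^s(V_{ij})$. Therefore
$$
\Phi^{s,t}_\delta(f,E)\leq\sum_{i,j}\zeta^s(f(A_{ij}))\,\zeta^t(A_{ij})
\leq\sum_i\Big(\sum_j\zeta^s(V_{ij})\Big)\zeta^t(A_i)
<\sum_i\Big(\H^s_\infty(f(A_i))+\tfrac{\eps\,2^{-i}}{\zeta^t(A_i)}\Big)\zeta^t(A_i),
$$
which is at most $\tilde{\H}^{s,t}_\delta(f,E)+2\eps$. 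Letting $\eps\to 0^+$ finishes the proof.

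The only genuinely delicate point is the bookkeeping when $\zeta^t(A_i)=0$ (equivalently $t>0$ and $\diam A_i=0$, i.e. $A_i$ is a singleton, or $t=0$ and $A_i=\varnothing$): in that case the $i$-th term contributes nothing to either sum regardless of how $f(A_i)$ is covered, so one may simply take $V_{i1}=f(A_i)$ and drop the $\eps\,2^{-i}/\zeta^t(A_i)$ slack. With that caveat handled, everything is routine. I expect no real obstacle here; this is exactly the kind of ``refine a covering to control the image'' argument that also underlies Lemma~\ref{T16}, and the main (trivial) ingredient is the isodiametric-type estimate $\H^s_\infty(B)\leq\zeta^s(B)$ for bounded $B$ together with stability of $\delta$-coverings under the operation $A_i\mapsto A_i\cap f^{-1}(V_{ij})$.
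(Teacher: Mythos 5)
Your proof is correct and follows essentially the same route as the paper: the easy direction via $\H^s_\infty(f(A_i))\leq\zeta^s(f(A_i))$, and the reverse direction by refining a near-optimal covering through the sets $A_i\cap f^{-1}(V_{ij})$ coming from near-optimal coverings of the images $f(A_i)$. The only difference is cosmetic bookkeeping of the error term (the paper uses the slack $\eps/(2^{i+1}(\zeta^t(A_i)+1))$ to avoid the division-by-zero caveat you handle separately).
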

\begin{proof}
Since $\H^s_\infty(f(A_i))\leq\zeta^s(f(A_i))$, the inequality $\tilde{\H}^{s,t}_\delta\leq\Phi^{s,t}_\delta$ is obvious. Therefore, it remains to prove that $\Phi^{s,t}_\delta(f,E)\leq\tilde{\H}^{s,t}_\delta(f,E)$ and we can assume that $\tilde{\H}^{s,t}_\delta(f,E)<\infty$.

Given $\eps>0$, let $\{ A_i\}_{i=1}^\infty$ be a $\delta$-covering of $E$ such that
$$
\sum_{i=1}^\infty \zeta^t(A_i)\H^s_\infty(f(A_i))<\tilde{\H}^{s,t}_\delta(f,E)+\frac{\eps}{2}.
$$
For each $i\in\bbbn$, let $\{C_{ij}\}_{j=1}^\infty$ be a covering of $f(A_i)$ such that
$$
\sum_{j=1}^\infty \zeta^s(C_{ij})<\H^s_\infty(f(A_i))+\frac{\eps}{2^{i+1}(\zeta^t(A_i)+1)}.
$$
Let $A_{ij}=A_i\cap f^{-1}(C_{ij})$. Then
\begin{equation*}
\begin{split}
\Phi^{s,t}_\delta(f,E)
&\leq
\sum_{i,j=1}^\infty \zeta^t(A_{ij})\zeta^s(f(A_{ij}))
\leq
\sum_{i=1}^\infty \zeta^t(A_i)\Big(\sum_{j=1}^\infty\zeta^s(C_{ij})\Big)\\
&\leq
\sum_{i=1}^\infty\zeta^t(A_i)\H^s_\infty(f(A_i))+\frac{\eps}{2}
<
\tilde{\H}^{s,t}_\delta(f,E)+\eps
\end{split}    
\end{equation*}
and the result follows.
\end{proof}

\begin{definition}
Let $X$ and $Y$ be metric spaces, $E\subset X$ any subset, and  $s>0$.
For any mapping $f:E\to Y$, we define the {\em lower $s$-density of $f$} as
$$
\Theta^{s}_*(f,E,x)=\liminf_{r\to 0^+} \frac{\H^s_\infty(f(B(x,r)\cap E))}{\omega_sr^s}\, .
$$
\end{definition}
\begin{remark}
\label{R8}
It is a routine exercise to show that we can replace open balls by closed balls in the definition of the lower density i.e.,
$$
\Theta^{s}_*(f,E,x)=\liminf_{r\to 0^+} \frac{\H^s_\infty(f(\bar{B}(x,r)\cap E))}{\omega_sr^s}\, .
$$
\end{remark}
\begin{remark}
\label{R9}
Note that if $f$ is Lipschitz, then
$\Theta^s_*(f,E,x)\leq (\lip f)^s$.
\end{remark}

\begin{remark}
In the case when $X=\bbbr^{n+m}$, $s=n$, and $Y$ is any metric space, the lower (and upper) $n$-density of $f$ was introduced in \cite{HZ} and it played an important role in the implicit function theorem for Lipschitz mappings into metric spaces.
\end{remark}
\begin{definition}
We say that a Borel measure $\mu$ on $X$ is doubling if 
$0<\mu(B(x,r))<\infty$ for all $x\in X$ and $r>0$, and if there is a constant $C>0$ such that
$\mu(B(x,2r))\leq C\mu(B(x,r))$ for all $x\in X$ and $r>0$.
\end{definition}
The next definition provides a particularly important instance of a doubling measure.
\begin{definition}
\label{D2}
We say that the Hausdorff measure $\H^s$, $s>0$, on $X$ is {\em Ahlfors regular}, if 
there are constants $C_A, C_B>0$ such that $C_Ar^s\leq\H^s(B(x,r))\leq C_Br^s$ for all $x\in X$ and all $r<\diam X$.
\end{definition}
\begin{definition}
We say that a metric space is {\em metric doubling} if there is $M>0$ such that every ball $B$ can be covered by no more than $M$ balls of half the radius.
\end{definition}
Note that if a metric space is metric doubling, then bounded sets are totally bounded. Recall that a metric space is compact if and only it it is complete and totally bounded. Therefore we have
\begin{lemma}
\label{T37}
If $X$ is metric doubling and complete, then $X$ is boundedly compact.
\end{lemma}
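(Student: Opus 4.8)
The plan is to unwind the two parenthetical remarks that precede the statement into a short argument, using only: (i) iteration of the metric doubling property, (ii) the fact that a closed subset of a complete space is complete, and (iii) the standard characterization of compactness as completeness plus total boundedness.

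First I would fix a closed and bounded set $A\subset X$ and choose $x_0\in X$, $R>0$ with $A\subset B(x_0,R)$. Applying the metric doubling property once covers $B(x_0,R)$ by at most $M$ balls of radius $R/2$; applying it to each of those and iterating $k$ times covers $B(x_0,R)$, and hence $A$, by at most $M^k$ balls of radius $R/2^k$. Given any $\eps>0$, picking $k$ with $R/2^k<\eps$ exhibits a finite $\eps$-net for $A$, so $A$ is totally bounded.

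Next I would observe that $A$, being a closed subset of the complete metric space $X$, is itself complete. A metric space that is both complete and totally bounded is compact (any sequence has a Cauchy subsequence by total boundedness, and the Cauchy subsequence converges by completeness). Hence $A$ is compact. Since $A$ was an arbitrary closed and bounded subset of $X$, this is exactly the statement that $X$ is boundedly compact.

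There is no real obstacle here: the only point worth a sentence of care is that the iterated doubling genuinely yields nets of \emph{arbitrarily} small radius (so that ``bounded'' upgrades to ``totally bounded''), and that boundedness of $A$ is used precisely to place it inside a single ball before iterating. Everything else is the textbook equivalence ``compact $\iff$ complete and totally bounded.''
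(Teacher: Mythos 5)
Your proof is correct and is exactly the argument the paper intends: it simply expands the two observations preceding the lemma (metric doubling implies bounded sets are totally bounded, and compact $\iff$ complete and totally bounded) into explicit detail, including the iteration of the doubling condition and the completeness of closed subsets. No gaps, and no difference in approach from the paper.
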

The following lemma is an easy exercise
\begin{lemma}
If $\mu$ is a doubling measure on $X$, then $X$ is metric doubling.
\end{lemma}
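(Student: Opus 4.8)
The plan is to combine a maximal-separated-set argument with the packing estimate that a doubling measure provides. Fix an arbitrary ball $B(x,r)$ in $X$. I would first choose a maximal $r/2$-separated subset $\{x_1,\ldots,x_N\}$ of $B(x,r)$, that is, a subset with $d(x_i,x_j)\ge r/2$ whenever $i\ne j$ which cannot be enlarged while keeping this property. The key claim is then that the balls $B(x_1,r/2),\ldots,B(x_N,r/2)$ cover $B(x,r)$: if some $y\in B(x,r)$ lay outside all of them, then $d(y,x_i)\ge r/2$ for every $i$, so $\{x_1,\ldots,x_N,y\}$ would be a strictly larger $r/2$-separated set, contradicting maximality.

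It remains to bound $N$ by a constant depending only on the doubling constant $C$ of $\mu$. The balls $B(x_i,r/4)$ are pairwise disjoint, by the triangle inequality and $r/2$-separation, and they are all contained in $B(x,2r)$ since each $x_i\in B(x,r)$. On the other hand $B(x,2r)\subset B(x_i,3r)\subset B(x_i,4r)$, and since $4r=2^4\cdot(r/4)$, four applications of the doubling inequality give $\mu(B(x,2r))\le\mu(B(x_i,4r))\le C^4\mu(B(x_i,r/4))$. Summing over the disjoint balls contained in $B(x,2r)$,
\[
N\,C^{-4}\mu(B(x,2r))\le\sum_{i=1}^N\mu(B(x_i,r/4))\le\mu(B(x,2r)),
\]
and since $0<\mu(B(x,2r))<\infty$ we conclude $N\le C^4$. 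Hence $B(x,r)$ is covered by at most $M:=C^4$ balls of radius $r/2$, which is exactly the metric doubling condition.

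I do not expect a genuine obstacle here; the two points that deserve a line of care are the following. First, the existence of a maximal $r/2$-separated subset of $B(x,r)$: the same measure computation as above shows that \emph{every} $r/2$-separated subset of $B(x,r)$ has at most $C^4$ elements (infinitely many disjoint balls of a fixed positive measure cannot fit inside the finite-measure set $B(x,2r)$), so a maximal one exists and is finite, e.g.\ by a greedy construction that must terminate. Second, the finiteness and positivity $0<\mu(B(x,2r))<\infty$ is precisely part of the definition of a doubling measure, and this is what makes the packing count meaningful. All the remaining steps are routine triangle-inequality manipulations, so I would keep them brief.
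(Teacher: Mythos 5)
Your proof is correct and is essentially the argument the paper intends: the paper only sketches it in one sentence (``there cannot be too many points in $B$ whose mutual distances are greater than or equal to $r/2$''), and your maximal $r/2$-separated set plus the packing count via disjoint balls $B(x_i,r/4)\subset B(x,2r)\subset B(x_i,4r)$ is exactly the standard way to make that sketch precise, yielding the bound $N\leq C^4$.
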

Indeed, there cannot be too many points in $B$ whose mutual distances are greater than or equal to $r/2$, where $r$ is the radius of $B$.

The next result is the Vitali covering theorem for doubling measures, see \cite[Theorem~1.6]{heinonen}
\begin{lemma}
\label{T29}
Let $\mu$ be a doubling measure on a metric space $X$ and let $E\subset X$.   
If $\mathcal{F}$ is a family of closed balls centered at $E$ such that for every $x\in E$
$$
\inf\{r>0:\, B(x,r)\in\mathcal{F}\}=0,
$$
then there is a countable subfamily $\{B_1,B_2,\ldots\}\subset\mathcal{F}$
of pairwise disjoint balls
such that 
$$
\mu\Big(E\setminus \bigcup_{i=1}^\infty B_i\Big)=0.
$$
\end{lemma}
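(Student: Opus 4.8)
The plan is to deduce the lemma from the metric $5r$-covering theorem (Lemma~\ref{T9}) and its fine-covering refinement (Corollary~\ref{T8}), following the classical Vitali argument (cf. \cite[Theorem~1.6]{heinonen}); the doubling hypothesis will be used only to make a dilated ball comparable in $\mu$-measure to the original one, and to force disjoint families of balls inside a fixed ball to be countable. First I would replace $\mathcal{F}$ by $\mathcal{F}_1=\{\bar B(x,r)\in\mathcal{F}:\, r\le 1\}$: the assumption $\inf\{r:\,\bar B(x,r)\in\mathcal{F}\}=0$ ensures that $\mathcal{F}_1$ is still a fine covering of $E$ by closed sets of diameter at most $2$, because for $x\in E$ and $\eps>0$ every $\bar B(x,r)\in\mathcal{F}$ with $r<\min\{\eps,1\}$ satisfies $x\in\bar B(x,r)\subset B(x,\eps)$. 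Lemma~\ref{T9} applied to $\mathcal{F}_1$ yields a pairwise disjoint subfamily $\mathcal{G}\subset\mathcal{F}_1$ with $E\subset\bigcup_{B\in\mathcal{G}}\5 B\subset\bigcup_{B\in\mathcal{G}}5B$, and Corollary~\ref{T8} upgrades this to the \emph{finite-exhaustion} inclusion
\begin{equation}
\label{eqplan}
A\setminus\bigcup_{i=1}^n B_i\ \subset\ \bigcup_{B\in\mathcal{G}\setminus\{B_1,\ldots,B_n\}}5B \, ,
\end{equation}
valid for every finite $\{B_1,\ldots,B_n\}\subset\mathcal{G}$ and every set $A$ that $\mathcal{F}_1$ finely covers (in particular every subset of $E$).

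What is left is to prove $\mu(E\setminus\bigcup_{B\in\mathcal{G}}B)=0$. Fix $x_o\in X$, write $E_m=E\cap\bar B(x_o,m)$ so that $E=\bigcup_{m}E_m$, and set $\mathcal{G}_m=\{B\in\mathcal{G}:\, B\subset\bar B(x_o,m+6)\}$. A one-line triangle-inequality estimate shows that whenever $x\in E_m$ lies in $5B$ for some $B\in\mathcal{G}$, then $B\in\mathcal{G}_m$; substituting this into \eqref{eqplan} with $A=E_m$ sharpens the inclusion to $E_m\setminus\bigcup_{i=1}^n B_i\subset\bigcup_{B\in\mathcal{G}_m\setminus\{B_1,\ldots,B_n\}}5B$ for finite $\{B_1,\ldots,B_n\}\subset\mathcal{G}_m$. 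Each $\mathcal{G}_m$ is a disjoint family of balls of positive $\mu$-measure contained in the finite-measure set $\bar B(x_o,m+6)$, hence countable with $\sum_{B\in\mathcal{G}_m}\mu(B)\le\mu(\bar B(x_o,m+6))<\infty$; in particular $\mathcal{G}=\bigcup_m\mathcal{G}_m$ is countable, as the statement requires. Enumerating $\mathcal{G}_m=\{B_1,B_2,\ldots\}$ and, for $\eps>0$, choosing $n$ with $\sum_{i>n}\mu(B_i)<\eps$, the sharpened inclusion together with the doubling estimate $\mu(5B)\le C_\mu\mu(B)$ gives
$$
\mu\Big(E_m\setminus\bigcup_{B\in\mathcal{G}_m}B\Big)\ \le\ \mu\Big(E_m\setminus\bigcup_{i=1}^n B_i\Big)\ \le\ \sum_{i>n}\mu(5B_i)\ \le\ C_\mu\,\eps \, ;
$$
letting $\eps\to 0^+$ yields $\mu(E_m\setminus\bigcup_{B\in\mathcal{G}_m}B)=0$, hence $\mu(E_m\setminus\bigcup_{B\in\mathcal{G}}B)=0$ as well, and summing over $m$ gives $\mu(E\setminus\bigcup_{B\in\mathcal{G}}B)=0$.

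I do not expect a genuine difficulty, the statement being classical; the one point that needs care is the treatment of an unbounded $E$. It is tempting to split $E$ into bounded pieces and run the selection on each piece separately, but the balls selected for different pieces need not be disjoint, which would destroy the required global disjointness of $\mathcal{G}$. The correct order is to run Lemma~\ref{T9} once on all of $\mathcal{F}_1$, producing a single disjoint $\mathcal{G}$, and only afterwards verify the null-set conclusion piece by piece, using that only the finite-measure sub-family $\mathcal{G}_m$ of $\mathcal{G}$ is relevant to $E_m$; the bookkeeping that restricts \eqref{eqplan} to $\mathcal{G}_m$ is the only mildly delicate step.
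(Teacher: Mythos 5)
Your argument is correct. Note, however, that the paper does not prove Lemma~\ref{T29} at all: it simply cites \cite[Theorem~1.6]{heinonen}, so there is no in-paper proof to compare with; what you have written is the classical Vitali argument, essentially the one in Heinonen, but routed through the paper's own tools (Lemma~\ref{T9} and Corollary~\ref{T8}), which has the mild added value of making the lemma self-contained within the paper. Your key steps check out: restricting to radii $\leq 1$ keeps the fine-covering property and gives uniformly bounded diameters; the enlarged set $\text{\Saturn}B$ of Lemma~\ref{T9} is indeed contained in $5B$ for a ball $B=\bar{B}(x,r)$ (if $z\in F$ with $F\cap B\neq\varnothing$ and $\diam F\leq 2\diam B\leq 4r$, then $d(z,x)\leq\diam F+r\leq 5r$), so Corollary~\ref{T8} yields your finite-exhaustion inclusion; the localization to $\mathcal{G}_m=\{B\in\mathcal{G}:B\subset\bar{B}(x_o,m+6)\}$ is justified by the triangle inequality since all radii are $\leq 1$; disjointness plus $\mu(B)>0$ and $\mu(\bar{B}(x_o,m+6))<\infty$ gives countability and a convergent tail, and doubling controls $\mu(5B_i)$ by $\mu(B_i)$. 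Your closing remark about the order of operations (select one disjoint family globally, then verify nullity on bounded pieces) is exactly the right way to handle unbounded $E$. The only points left implicit, both routine, are the verification $\text{\Saturn}B\subset 5B$ sketched above and the fact that for a possibly non-measurable $E$ one works with the outer-measure extension of $\mu$ (monotonicity and countable subadditivity are all that is used), which is the standard reading of the statement.
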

The next result is  the Lebesgue differentiation theorem for doubling measures. It is a consequence of Lemma~\ref{T29}, see \cite[Theorem~1.8]{heinonen}
\begin{lemma}
\label{T31}
If $g$ is a locally integrable function on a metric space with a doubling measure $\mu$, then
\begin{equation}
\label{eq35}
\lim_{r\to 0}\mvint_{B(x,r)} g\, d\mu=g(x)
\quad
\text{for $\mu$-almost all $x\in X$.}
\end{equation}
\end{lemma}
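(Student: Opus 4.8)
The plan is to prove the stronger statement that $\mu$-almost every point is a \emph{Lebesgue point} of $g$, namely
$$
\Omega g(x):=\limsup_{r\to 0^+}\mvint_{\bar B(x,r)}|g(y)-g(x)|\,d\mu(y)=0\qquad\text{for }\mu\text{-a.e. }x\in X .
$$
This yields \eqref{eq35}: for such an $x$, using $\bar B(x,r)\subset B(x,2r)$ and the doubling property with constant $C$,
$$
\Bigl|\mvint_{B(x,r)}g\,d\mu-g(x)\Bigr|\le\frac{1}{\mu(B(x,r))}\int_{\bar B(x,r)}|g-g(x)|\,d\mu\le\frac{\mu(B(x,2r))}{\mu(B(x,r))}\,\mvint_{\bar B(x,r)}|g-g(x)|\,d\mu\le C\,\mvint_{\bar B(x,r)}|g-g(x)|\,d\mu ,
$$
and the right-hand side tends to $0$ as $r\to 0^+$. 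Since the conclusion is local and $X=\bigcup_{n\in\bbbn}B(x_o,n)$ for a fixed $x_o$, while a doubling measure is finite on balls, a routine truncation (replace $g$ by $g\chi_{B(x_o,n+2)}$ when proving the statement on $B(x_o,n)$) reduces matters to $g\in L^1(\mu)$. We will also use that Lipschitz functions are dense in $L^1(\mu)$: a doubling measure is metric doubling, hence $X$ is separable, so the restriction of $\mu$ to each ball is a regular finite Borel measure; approximating $\chi_A$, $\mu(A)<\infty$, by a closed $C\subset A$ and an open $U\supset A$ with $\mu(U\setminus C)$ small, the Lipschitz cutoff $u(x)=\dist(x,X\setminus U)/\bigl(\dist(x,X\setminus U)+\dist(x,C)\bigr)$ satisfies $\|u-\chi_A\|_{L^1(\mu)}\le\mu(U\setminus C)$.

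Fix $\eps>0$ and choose a Lipschitz $h$ with $\|g-h\|_{L^1(\mu)}<\eps$. For \emph{every} $x$ we have $\mvint_{\bar B(x,r)}|h(y)-h(x)|\,d\mu(y)\le(\lip h)\,r\to 0$, so by the triangle inequality
$$
\Omega g(x)\le\limsup_{r\to 0^+}\mvint_{\bar B(x,r)}|g-h|\,d\mu+|g(x)-h(x)|\qquad\text{for every }x .
$$
Given $\alpha>0$, set $S_\alpha=\bigl\{x:\limsup_{r\to 0^+}\mvint_{\bar B(x,r)}|g-h|\,d\mu>\alpha\bigr\}$. If $x\notin S_\alpha$ and $|g(x)-h(x)|<\alpha$, the last display gives $\Omega g(x)\le 2\alpha$; hence, by subadditivity of the outer measure and Chebyshev's inequality,
$$
\mu\bigl(\{\Omega g>2\alpha\}\bigr)\le\mu(S_\alpha)+\mu\bigl(\{|g-h|\ge\alpha\}\bigr)\le\mu(S_\alpha)+\frac{\eps}{\alpha} .
$$

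The decisive step, and the only one that genuinely uses Lemma~\ref{T29}, is the estimate of $\mu(S_\alpha)$. For each $x\in S_\alpha$ there are arbitrarily small $r>0$ with $\int_{\bar B(x,r)}|g-h|\,d\mu>\alpha\,\mu(\bar B(x,r))$; let $\mathcal F$ be the family of all such closed balls. Then $\mathcal F$ consists of closed balls centered at $S_\alpha$ and, precisely because of the $\limsup$ (rather than $\lim$) in the definition of $S_\alpha$, $\inf\{r:\bar B(x,r)\in\mathcal F\}=0$ for every $x\in S_\alpha$. Lemma~\ref{T29} (with $E=S_\alpha$) yields pairwise disjoint balls $\bar B(x_i,r_i)\in\mathcal F$ with $\mu\bigl(S_\alpha\setminus\bigcup_i\bar B(x_i,r_i)\bigr)=0$, so
$$
\mu(S_\alpha)\le\sum_i\mu(\bar B(x_i,r_i))\le\frac{1}{\alpha}\sum_i\int_{\bar B(x_i,r_i)}|g-h|\,d\mu\le\frac{1}{\alpha}\,\|g-h\|_{L^1(\mu)}<\frac{\eps}{\alpha} .
$$
Combining with the previous display, $\mu(\{\Omega g>2\alpha\})<2\eps/\alpha$ for every $\eps>0$, hence $\mu(\{\Omega g>2\alpha\})=0$; taking $\alpha=1/n$ and a countable union over $n\in\bbbn$ shows $\Omega g=0$ $\mu$-a.e., which completes the proof.

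The main obstacle is bookkeeping rather than mathematics: one must verify that $\mathcal F$ really meets the hypothesis of Lemma~\ref{T29}, and that Lipschitz functions are dense in $L^1(\mu)$ — the latter relying only on the elementary regularity of a finite Borel measure on a separable metric space. In particular, neither the Hardy--Littlewood maximal function nor its weak-type $(1,1)$ inequality is needed; the doubling hypothesis enters only through Lemma~\ref{T29}, through the finiteness of $\mu$ on balls, and through the single inequality $\mu(\bar B(x,r))\le\mu(B(x,2r))\le C\mu(B(x,r))$ used to pass between closed and open balls.
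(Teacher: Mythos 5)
Your proof is correct. Note that the paper itself does not prove Lemma~\ref{T31}: it only records that the statement is a consequence of the Vitali covering theorem for doubling measures (Lemma~\ref{T29}) and refers to Heinonen's book. Your argument supplies exactly those missing details, and does so along the route the paper gestures at: instead of the usual path through the Hardy--Littlewood maximal function and its weak-type $(1,1)$ bound, you apply Lemma~\ref{T29} directly to the exceptional sets $S_\alpha$, which works precisely because the $\limsup$ in the definition of $S_\alpha$ provides balls of arbitrarily small radius at every point, and the disjointness of the selected balls turns the Chebyshev-type estimate on each ball into the bound $\mu(S_\alpha)\le \alpha^{-1}\|g-h\|_{L^1}$. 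The localization to $g\chi_{B(x_o,n+2)}$ and the passage from closed-ball averages to open-ball averages via doubling are also handled correctly. One small inaccuracy: the cutoff $u(x)=\dist(x,X\setminus U)/\bigl(\dist(x,X\setminus U)+\dist(x,C)\bigr)$ is continuous but need not be Lipschitz when $\dist(C,X\setminus U)=0$; this is harmless, since all you use about $h$ is that $\mvint_{\bar B(x,r)}|h-h(x)|\,d\mu\to 0$ at every $x$, which continuity already gives, or you can get genuine Lipschitz approximants by taking $u_k(x)=\max\{0,\,1-k\,\dist(x,C)\}$ and letting $k\to\infty$ with dominated convergence on a ball of finite measure.
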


\begin{lemma}
\label{T32}
Suppose the metric space $X$ is metric doubling and $E\subset X$ is bounded. If $s,t\in [0,\infty)$, and
$f:E\to Y$ is a mapping, then
$\Phi^{s,t}(f,E)=0$ if and only if $\Phi^{s,t}_\infty(f,E)=0$.
\end{lemma}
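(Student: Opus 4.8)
The plan is to prove the two implications separately; only the direction $\Phi^{s,t}_\infty(f,E)=0\Rightarrow\Phi^{s,t}(f,E)=0$ uses the hypotheses. The implication $\Phi^{s,t}(f,E)=0\Rightarrow\Phi^{s,t}_\infty(f,E)=0$ is immediate in any metric space: every $\delta$-covering is in particular an $\infty$-covering, so $\Phi^{s,t}_\infty(f,E)\le\Phi^{s,t}_\delta(f,E)$ for all $\delta\in(0,\infty]$, and letting $\delta\to0^+$ gives $\Phi^{s,t}_\infty(f,E)\le\Phi^{s,t}(f,E)$.

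For the converse I would establish the quantitative estimate that \emph{for every $\delta>0$ there is a constant $C=C(M,\delta,\diam E)$ with}
\[
\Phi^{s,t}_\delta(f,E)\le C\,\Phi^{s,t}_\infty(f,E),
\]
\emph{where $M\ge1$ is the metric doubling constant of $X$.} Since $\Phi^{s,t}(f,E)=\lim_{\delta\to0^+}\Phi^{s,t}_\delta(f,E)$, this at once yields the claim once the right-hand side vanishes. To prove the estimate, set $D:=\diam E<\infty$, $\beta:=\log_2 M$, and $C:=1+M\,2^{\beta}(D/\delta)^{\beta}$, so that $C\ge1$; then take any covering $E\subset\bigcup_i A_i$ by bounded sets. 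Replacing each $A_i$ by $A_i\cap E$ (which keeps a covering of $E$ and does not increase $\sum_i\zeta^s(f(A_i))\zeta^t(A_i)$) I may assume $A_i\subset E$, hence $\diam A_i\le D$ for every $i$.

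Next I would refine $\{A_i\}$ to a $\delta$-covering. Keep unchanged each $A_i$ with $\diam A_i\le\delta$. If $d_i:=\diam A_i>\delta$, then $A_i\subset\bar B(x_i,d_i)$ for any $x_i\in A_i$, and applying the doubling property $\lceil\log_2(2d_i/\delta)\rceil$ times covers $\bar B(x_i,d_i)$, hence $A_i$, by at most $M^{\lceil\log_2(2d_i/\delta)\rceil}\le M\,2^{\beta}(d_i/\delta)^{\beta}\le C$ balls of diameter $\le\delta$; intersecting these balls with $A_i$ gives finitely many sets $A_{i,1},\dots,A_{i,N_i}$ ($N_i\le C$) with $\bigcup_l A_{i,l}=A_i$, $\diam A_{i,l}\le\delta$ and $f(A_{i,l})\subset f(A_i)$. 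Using inclusion-monotonicity of $\zeta^s$ and $\zeta^t$ together with $\zeta^t(A_{i,l})\le\tfrac{\omega_t}{2^t}\delta^t\le\tfrac{\omega_t}{2^t}d_i^t=\zeta^t(A_i)$ (valid since $\delta<d_i$) I get
\[
\sum_{l=1}^{N_i}\zeta^s(f(A_{i,l}))\,\zeta^t(A_{i,l})\le N_i\,\zeta^s(f(A_i))\,\zeta^t(A_i)\le C\,\zeta^s(f(A_i))\,\zeta^t(A_i),
\]
and the same bound holds trivially for the kept sets because $C\ge1$. Summing over $i$, the refined family is a $\delta$-covering of $E$ of weight at most $C\sum_i\zeta^s(f(A_i))\zeta^t(A_i)$; taking the infimum over all $\infty$-coverings $\{A_i\}$ of $E$ yields the estimate, and hence the lemma.

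The one delicate point — and the reason the hypotheses are needed — is that metric doubling only gives the \emph{polynomial} bound $M\,2^{\beta}(d_i/\delta)^{\beta}$ on the number of pieces required to subdivide a set of diameter $d_i$ into pieces of diameter $\le\delta$, with exponent $\beta=\log_2 M$ possibly far larger than $t$; thus subdivision genuinely inflates the weight, and this inflation is unbounded as $\delta\to0^+$. It is kept under control only because $\diam E<\infty$ and $\delta$ is held fixed: then $d_i\le D$ turns the piece count into the uniform bound $N_i\le C$, so the inflation is a single constant factor, which is absorbed once $\Phi^{s,t}_\infty(f,E)=0$. The cases $s=0$ and $t=0$ require no separate treatment, since only inclusion-monotonicity of $\zeta^s$ and the crude bound $\zeta^t(A_{i,l})\le\tfrac{\omega_t}{2^t}\delta^t$ (which reads $\zeta^0(A_{i,l})\le1$ when $t=0$) are used.
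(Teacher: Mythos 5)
Your proof is correct and follows essentially the same route as the paper: both arguments use metric doubling together with the boundedness of $E$ to subdivide the sets of an arbitrary covering into a uniformly bounded number ($N(\delta)$ resp.\ your $C(M,\delta,\diam E)$) of pieces of diameter at most $\delta$, so that the weight is inflated by only a fixed factor, giving $\Phi^{s,t}_\delta(f,E)\le C\,\Phi^{s,t}_\infty(f,E)$ and hence the conclusion. The only cosmetic difference is that the paper partitions $E$ once into $N(\delta)$ small pieces and intersects, whereas you subdivide each covering set $A_i$ by iterating the doubling property; both are fine.
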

\begin{proof}
Since $\Phi^{s,t}_\infty\leq\Phi^{s,t}$, one implication is obvious and it remains to show that if 
$\Phi^{s,t}_\infty(f,E)=0$, then for any $\delta>0$ we have $\Phi^{s,t}_\delta(f,E)=0$.
Since $E$ is bounded and $X$ is metric doubling, $E$ can be split into a finite number of pieces, say $N(\delta)$ many, each of diameter less than $\delta$. 

Given $\eps>0$, let $E\subset\bigcup_{i=1}^\infty A_i$ be a covering such that
$$
\sum_{i=1}^\infty\zeta^s(f(A_i))\zeta^t(A_i)<\frac{\eps}{N(\delta)}.
$$
By replacing $A_i$ with $E\cap A_i$ we can further assume that $A_i\subset E$. Each of the sets $A_i$ is a union of  $N(\delta)$ sets $\{A_{ij}\}_{j=1}^{N(\delta)}$, each of diameter less that $\delta$. Therefore,
$$
\Phi^{s,t}_\delta(f,E)\leq\sum_{i=1}^\infty\sum_{j=1}^{N(\delta)}
\zeta^{s}(f(A_{ij}))\zeta^t(A_{ij})\leq N(\delta)\sum_{i=1}^\infty\zeta^s(f(A_i))\zeta^t(A_i)<\eps.
$$
\end{proof}
\begin{theorem}
\label{T30}
Suppose $0<t\leq s<\infty$, the measure $\H^s$ is Ahlfors regular on a complete metric space $X$, $E\subset X$ is closed, and $f:E\to Y$ is Lipschitz. Then 
\begin{equation}
\label{eq33}
\int_Y^* \H^{s-t}(f^{-1}(y) \cap E) \, d\H^t(y) \leq \frac{\omega_{s-t}\omega_t}{C_A}\int_E \Theta^{t}_*(f,E,x) \, d\H^s(x)  \, .
\end{equation}
where $C_A$ is the constant from Definition~\ref{D2}.
\end{theorem}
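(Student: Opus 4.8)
The plan is to derive Theorem~\ref{T30} from Theorem~\ref{T20}, which already gives
$$\int_Y^* \H^{s-t}(f^{-1}(y)\cap E)\, d\H^t(y)\le \Phi^{t,s-t}(f,E)$$
(applied with $E$, viewed as a metric space, in place of $X$), so that it suffices to prove
$$\Phi^{t,s-t}(f,E)\le \frac{\omega_{s-t}\omega_t}{C_A}\int_E \Theta^t_*(f,E,x)\, d\H^s(x).$$
Since $\H^s$ is Ahlfors regular it is a doubling measure, hence $X$ is metric doubling and both the Vitali covering theorem (Lemma~\ref{T29}) and the Lebesgue differentiation theorem (Lemma~\ref{T31}) are available for $\mu=\H^s$; moreover bounded subsets of $X$ have finite $\H^s$-measure. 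The key identity we use is Lemma~\ref{T27}, namely $\Phi^{t,s-t}_\delta(f,E)=\tilde\H^{t,s-t}_\delta(f,E)$, which brings $\H^t_\infty(f(\cdot))$ into play and thereby links $\Phi^{t,s-t}$ with the lower density $\Theta^t_*$, which is itself defined through $\H^t_\infty$.

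First I would treat the case where $E$ is bounded, so that $\H^s(E)<\infty$. Fix an $\H^s$-measurable $g\colon X\to[0,\infty]$ with $g=0$ off $E$ and $g\ge\Theta^t_*(f,E,\cdot)$ on $E$ (by Remark~\ref{R9} such $g$ may be taken bounded, hence locally $\H^s$-integrable). Fix $\delta>0$ and $\eps>0$. Using the closed-ball form of the density (Remark~\ref{R8}), the definition of $\liminf$, Ahlfors regularity, and Lemma~\ref{T31} in its closed-ball form, for $\H^s$-a.e.\ $x\in E$ there are arbitrarily small radii $r>0$ with $2r<\delta$ for which simultaneously
$$\H^t_\infty\!\big(f(\bar B(x,r)\cap E)\big)<(g(x)+\eps)\,\omega_t r^t \qquad\text{and}\qquad g(x)\,\H^s(\bar B(x,r))<\int_{\bar B(x,r)}\! g\, d\H^s+\eps\,\H^s(\bar B(x,r)).$$
These balls form a fine family over a subset of full $\H^s$-measure in $E$, to which Lemma~\ref{T29} applies, producing pairwise disjoint closed balls $B_i=\bar B(x_i,r_i)$ satisfying both inequalities and with $\H^s\!\big(E\setminus\bigcup_i B_i\big)=0$.

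Then I would bound $\Phi^{t,s-t}_\delta(f,E)=\tilde\H^{t,s-t}_\delta(f,E)$ using the $\delta$-covering of $E$ by the sets $B_i\cap E$ together with an efficient covering of the $\H^s$-null remainder $N=E\setminus\bigcup_iB_i$, which contributes nothing since $\Phi^{t,s-t}_\delta(f,N)\le(\lip f)^t\,\frac{\omega_t\omega_{s-t}}{\omega_s}\,\H^s_\delta(N)=0$ by Lemma~\ref{T16} and $\Phi^{t,s-t}_\delta$ is subadditive by Lemma~\ref{T35}. For each $i$ we have $\zeta^{s-t}(B_i\cap E)\le\omega_{s-t}r_i^{s-t}$ and, by Ahlfors regularity, $\H^s(B_i)\ge C_A r_i^s$ (with $r_i<\diam X$ once $\delta$ is small enough), so the first inequality above gives
$$\H^t_\infty\!\big(f(B_i\cap E)\big)\,\zeta^{s-t}(B_i\cap E)\le \frac{\omega_{s-t}\omega_t}{C_A}\,(g(x_i)+\eps)\,\H^s(B_i),$$
and the second gives $(g(x_i)+\eps)\H^s(B_i)<\int_{B_i}g\, d\H^s+2\eps\,\H^s(B_i)$. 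Summing over $i$, using disjointness (so $\sum_i\int_{B_i}g\, d\H^s\le\int_E g\, d\H^s$ and $\sum_i\H^s(B_i)=\H^s(\bigcup_iB_i)\le K<\infty$ because all $B_i$ lie in a fixed bounded set), and letting $\eps\to0^+$, I obtain $\Phi^{t,s-t}_\delta(f,E)\le\frac{\omega_{s-t}\omega_t}{C_A}\int_Eg\, d\H^s$ for every $\delta>0$, hence $\Phi^{t,s-t}(f,E)\le\frac{\omega_{s-t}\omega_t}{C_A}\int_Eg\, d\H^s$; taking the infimum over admissible $g$ yields \eqref{eq33} for bounded $E$ (with $\int_E\Theta^t_*\, d\H^s$ read as an upper integral if $\Theta^t_*$ fails to be measurable). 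For a general closed $E$, I would write $E=\bigcup_nE_n$ with $E_n=E\cap\bar B(x_o,n)$ increasing, apply the bounded case to $f|_{E_n}$, note that $\Theta^t_*(f|_{E_n},E_n,x)\le\Theta^t_*(f,E,x)$, and let $n\to\infty$ using Lemma~\ref{T34} (increasing sets, for $\H^{s-t}$) together with the monotone convergence theorem for upper integrals, Lemma~\ref{T1}.

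The main obstacle is the step from the pointwise values $g(x_i)$ at the centres of the Vitali balls to the genuine integral $\int_E g\, d\H^s$: this is precisely what forces the Lebesgue differentiation theorem for the doubling measure $\H^s$ into the argument and requires the density inequality to be built into the fine covering \emph{before} Lemma~\ref{T29} is invoked. A secondary nuisance is the possible non-measurability of $x\mapsto\Theta^t_*(f,E,x)$, which is why the whole argument is run with a measurable majorant $g$ rather than with $\Theta^t_*$ itself.
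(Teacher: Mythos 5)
Your proposal is correct and follows essentially the same route as the paper: reduce via Theorem~\ref{T20} to bounding $\Phi^{t,s-t}(f,E)$, pass to $\tilde{\H}^{t,s-t}_\delta$ by Lemma~\ref{T27}, build the fine family of closed balls from the definition of $\Theta^t_*$ together with the Lebesgue differentiation theorem for the doubling measure $\H^s$, extract disjoint balls by Lemma~\ref{T29}, use Ahlfors regularity to trade $r_i^s$ for $\H^s(B_i)$, kill the null leftover with Lemmas~\ref{T16} and~\ref{T35}, and exhaust an unbounded $E$ using Lemma~\ref{T1} (and Lemma~\ref{T34}). The one real deviation is your treatment of measurability: the paper proves that $\Theta^t_*(f,E,\cdot)$ is $\H^s$-measurable, via upper semicontinuity of $x\mapsto\H^t_\infty\bigl(f(\bar{B}(x,r)\cap E)\bigr)$, which is exactly where completeness of $X$ and closedness of $E$ enter, and this is what makes the right-hand side of \eqref{eq33} a genuine integral and lets Lemma~\ref{T31} be applied to $\Theta^t_*\chi_E$ itself; your measurable-majorant device is a legitimate way to run the covering argument, but it only delivers the inequality with the upper integral, so to obtain the theorem exactly as stated you would still need to supply that measurability step.
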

\begin{remark}
The assumption that $X$ is complete guarantees that $X$ is boundedly compact (Lemma~\ref{T37}).
Since $E$ is closed, $\bar{B}(x,r)\cap E$ is compact. We need this assumption to prove measurability of $\Theta_*^t(f,E,\cdot)$.
We do not know if the theorem is true for any $\H^s$-measurable set $E$, and without assuming that $X$ is complete.
\end{remark}
\begin{proof}
We can assume that $E$ is bounded, because the general case will follow from the inequality applied to $E\cap \bar{B}(x_o,R)$ upon passing to the limit as $R\to\infty$. Note that in order to pass to the limit on the left hand side, we need to use Lemma~\ref{T1}.

The density function $\Theta^{t}_*(f,E,\cdot)$ is measurable.
To see this it suffices to prove that the function $h_r(x)=\H^s_\infty(f(\bar{B}(x,r)\cap E))$ (see Remark~\ref{R8}) is Borel
and this is true since the function is upper-semicontinuous meaning that $\limsup_{y\to x} h_r(y)\leq h_r(x)$. Indeed, under our assumptions, the set $\bar{B}(x,r)\cap E$ and its image are compact. We can approximate $\H^s_\infty(f(\bar{B}(x,r)\cap E))$ using an open covering $\{ U_i\}_{i=1}^\infty$. If $y$ is close to $x$, then $f(\bar{B}(y,r)\cap E)\subset\bigcup_{i=1}^\infty U_i$ and we can use the same open covering $\{ U_i\}_{i=1}^\infty$ to get the upper estimate for the content $\H^s_\infty(f(\bar{B}(y,r)\cap E))$.

Since $\H^s(E)<\infty$ ($E$ is bounded and $\H^s$ is Ahlfors regular), in view of Remark~\ref{R9}, the right hand side of \eqref{eq33} is finite.

According to Theorem~\ref{T20}, it suffices to prove that
\begin{equation}
\label{eq34}
\Phi^{t,s-t}(f,E)\leq \frac{\omega_{s-t}\omega_t}{C_A} \int_E \Theta^{t}_*(f,E,x) \, d\H^s(x)  \, .
\end{equation}
Let $N$ be the set of points $x\in E$ for which \eqref{eq35} does not hold with
$g=\Theta^{t}_*(f,E,\cdot)\chi_E$. Since $\H^s(N)=0$, Lemma~\ref{T16} yields that
$\Phi^{t,s-t}(f,N)=0$ and hence by Lemma \ref{T35},
\begin{equation}
\label{eq41}
\Phi^{t,s-t}(f,E)=\Phi^{t,s-t}(f,N) \, .
\end{equation}

Given $\eps>0$ and $\delta>0$, for each $x\in E\setminus N$, there is a sequence $r_{x,i}\to 0^+$, $B_{x,i}=\bar{B}(x,r_{x,i})$ 
such that
$$
\frac{\H^t_\infty(f(E\cap B_{x,i}))}{\omega_t r_{x,i}^t}
\leq\Theta^{t}_*(f,E,x)+\frac{\eps}{2}\leq
\mvint_{B(x,r_{x,i})}\Theta^{t}_*(f,E,z)\chi_E(z)\, d\H^s(z)+\eps.
$$
Lemma~\ref{T29} applied to the family
$\{B_{x,i}: x \in E\setminus N, r_{x,i} < \delta/2 \}$ gives pairwise disjoint balls $B_i$ with diameters less than $\delta$ such that
$$
\H^s\Big(E\setminus\bigcup_{i=1}^\infty B_i\Big)=\H^s\Big((E\setminus N)\setminus\bigcup_{i=1}^\infty B_i\Big)=0.
$$
Using a similar argument as in the proof of \eqref{eq41}, one can easily show that
$$
\Phi^{t,s-t}_\delta(f,E)=\Phi^{t,s-t}_\delta\Big(f,E\cap\bigcup_{i=1}^\infty B_i\Big).
$$
Therefore, Lemma~\ref{T27} yields
\begin{equation*}
\begin{split}
&\Phi^{t,s-t}_\delta(f,E)=\Phi^{t,s-t}_\delta\Big(f,E\cap \bigcup_{i=1}^\infty B_{i}\Big)=
\tilde{\H}^{t,s-t}_\delta\Big(f,E\cap \bigcup_{i=1}^\infty B_{i}\Big)\\
&\leq
\sum_{i=1}^\infty\zeta^{s-t}(E\cap B_{i})\H^t_\infty(f(E\cap B_{i}))\\
&\leq
\sum_{i=1}^\infty \frac{\omega_{s-t}}{2^{s-t}}(2r_{i})^{s-t}\omega_t r_{i}^t
\Big(\mvint_{B(x_i,r_{i})}\Theta^{t}_*(f,E,z)\chi_E(z)\, d\H^s(z)+\eps\Big)\\
&\leq
\frac{\omega_{s-t}\omega_t}{C_A}
\Big(\int_E\Theta^{t}_*(f,E,z)\, d\H^s(z)+\eps\Big)
\end{split}    
\end{equation*}
and the result follows by letting $\delta\to 0^+$ and then $\eps\to 0^+$.
\end{proof}

It was proved in \cite[Proposition~5.2]{HZ} that if $f:E\to\bbbr^m$ is a Lipschitz continuous map defined on a measurable set $E\subset\bbbr^{n}$, $n\geq m$, then
$\Theta_*^m(f,E,x)=|J^mf|(x)$, where 
$$
|J^mf|(x)=\sqrt{\det(Df)(Df)^T}
\quad
\text{is the Jacobian.}
$$
This and the above result gives
\begin{corollary}
\label{T28}
If $f:E\to\bbbr^m$ is a Lipschitz map defined on a measurable set $E\subset \bbbr^{n}$, $n\geq m$, then
$$
\int_{\bbbr^m} \H^{n-m}(f^{-1}(y)\cap E)\, d\H^m(y)\leq
\frac{\omega_{n-m}\omega_m}{\omega_n}\int_{E} |J^mf|(x)\, d\H^n(x).
$$
\end{corollary}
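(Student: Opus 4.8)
The plan is to obtain the corollary as a direct specialization of Theorem~\ref{T30}, the only substantive input being the identity $\Theta^m_*(f,E,x)=|J^mf|(x)$ from \cite[Proposition~5.2]{HZ} quoted above; everything else is a matter of matching hypotheses. We take $X=\bbbr^n$, which is complete, and $Y=\bbbr^m$. Since $\H^n(B(x,r))=\omega_nr^n$ for all $x$ and $r$, the measure $\H^n$ is Ahlfors regular on $\bbbr^n$ with $C_A=C_B=\omega_n$, so the factor $\omega_{n-m}\omega_m/C_A$ in Theorem~\ref{T30} becomes exactly $\omega_{n-m}\omega_m/\omega_n$. We may assume $m\geq 1$, since for $m=0$ both sides of the asserted inequality reduce to $\H^n(E)$. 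The one genuine mismatch with Theorem~\ref{T30} is that it requires the domain to be closed whereas $E$ is merely measurable, and this will be repaired by inner regularity.

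First I would decompose $E$. Applying Lemma~\ref{T36} to each set $E\cap\bar{B}(0,k)$ (of finite $\H^n$ measure) and taking the union over $k$, we produce an increasing sequence of closed sets $K_1\subset K_2\subset\cdots\subset E$ with $\H^n(N)=0$, where $N:=E\setminus\bigcup_{i}K_i$. Applying Theorem~\ref{T30} with $s=n$, $t=m$ to the Lipschitz map $f|_{K_i}$ gives
$$
\int_{\bbbr^m}^*\H^{n-m}(f^{-1}(y)\cap K_i)\,d\H^m(y)\leq\frac{\omega_{n-m}\omega_m}{\omega_n}\int_{K_i}\Theta^m_*(f,K_i,x)\,d\H^n(x).
$$
Because $K_i\subset E$, we have $f(B(x,r)\cap K_i)\subset f(B(x,r)\cap E)$ and hence $\Theta^m_*(f,K_i,x)\leq\Theta^m_*(f,E,x)=|J^mf|(x)$ for $\H^n$-a.e.\ $x$ by \cite[Proposition~5.2]{HZ}; thus the right-hand side is bounded, uniformly in $i$, by $\frac{\omega_{n-m}\omega_m}{\omega_n}\int_E|J^mf|\,d\H^n$ (which we may assume finite, as otherwise there is nothing to prove). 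Letting $i\to\infty$, Lemma~\ref{T34} gives $\H^{n-m}(f^{-1}(y)\cap K_i)\uparrow\H^{n-m}\big(f^{-1}(y)\cap\bigcup_iK_i\big)$, so the monotone convergence theorem for upper integrals (Lemma~\ref{T1}) yields the same bound for $\int_{\bbbr^m}^*\H^{n-m}\big(f^{-1}(y)\cap\bigcup_iK_i\big)\,d\H^m(y)$.

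It remains to absorb $N$. Applying the inequality of Theorem~\ref{T14} to the Lipschitz map $f:E\to\bbbr^m$ (with $E$ itself regarded as the ambient metric space) and the subset $N$, with $s=n$ and $t=m$, gives
$$
\int_{\bbbr^m}^*\H^{n-m}(f^{-1}(y)\cap N)\,d\H^m(y)\leq(\lip f)^m\,\frac{\omega_{n-m}\omega_m}{\omega_n}\,\H^n(N)=0.
$$
Since $\H^{n-m}$ is an outer measure and $E=N\cup\bigcup_iK_i$, we have the pointwise bound $\H^{n-m}(f^{-1}(y)\cap E)\leq\H^{n-m}(f^{-1}(y)\cap N)+\H^{n-m}\big(f^{-1}(y)\cap\bigcup_iK_i\big)$, and subadditivity of the upper integral combines the two previous displays into the desired inequality. (Measurability of $y\mapsto\H^{n-m}(f^{-1}(y)\cap E)$, so that the upper integral above is the ordinary one, follows from the measurability statement in Theorem~\ref{T14} applied to each $E\cap\bar{B}(0,k)$ and then letting $k\to\infty$.) I expect the only real difficulty to be organizational: carrying out the reduction from the measurable set $E$ to closed exhausting sets $K_i$ while checking that both the density comparison $\Theta^m_*(f,K_i,\cdot)\leq\Theta^m_*(f,E,\cdot)$ and the Jacobian identity survive the restriction, and keeping the two limit passages (in $i$ here, and the unboundedness reduction already inside the proof of Theorem~\ref{T30}) separate and clean. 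No new estimate is needed beyond Theorem~\ref{T30} and the quoted identity $\Theta^m_*=|J^mf|$.
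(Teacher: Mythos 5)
Your proof is correct and follows essentially the same route as the paper: specialize Theorem~\ref{T30} with $X=\bbbr^n$, $s=n$, $t=m$ (so $C_A=\omega_n$) and invoke the identity $\Theta^m_*(f,E,\cdot)=|J^mf|$ from \cite[Proposition~5.2]{HZ}. The paper's proof is just the citation of these two facts; your extra inner-regularity step (the closed exhaustion $K_i\subset E$, the density monotonicity $\Theta^m_*(f,K_i,\cdot)\leq\Theta^m_*(f,E,\cdot)$, and disposing of the null remainder via Theorem~\ref{T14} together with Lemmas~\ref{T34} and~\ref{T1}) merely makes explicit the passage from the closed sets required in Theorem~\ref{T30} to a merely measurable $E$, which the paper leaves implicit.
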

\begin{remark}
\label{R7}
The celebrated coarea formula \cite[Theorem~3.10]{evans}, \cite[Theorem~3.2.11]{federer}, states that under the above assumptions
$$
\int_{\bbbr^m} \H^{n-m}(f^{-1}(y)\cap E)\, d\H^m(y)=\int_E |J^mf|(x)\, d\H^n(x).
$$
\end{remark}
Since we obtained the Corollary~\ref{T28} as a consequence of rather general results valid in metric spaces, it is not surprising that the result is not as sharp as the coarea formula. On the other hand a localized version of Theorem~\ref{T14} would  suggest a much weaker inequality with $|Df|^m$ instead of $|J^mf|$ since $|Df|$ can be regarded as a local Lipschitz constant of $f$. This shows that Theorem~\ref{T30} and hence also Theorem~\ref{T20} are substantial improvements of the coarea inequality.

\subsection{Mapping Content}
\label{MC}
In the context of quantitative decomposition of Lipschitz mappings into metric spaces Azzam and Schul \cite{azzams} defined the $(n,m)$-mapping content. This notion was further investigated by David and Schul \cite{davids} (see also \cite{HZ}).
\begin{definition}\label{bD1}
Let $Q_0=[0,1]^{n+m}$ be the unit cube and $X$ an arbitrary metric space. For 
a Lipschitz map $f:Q_0 \to X$ the {\em $(n,m)$-mapping content} of a set $E\subset Q_0$ is
$$
\H_\infty^{n,m}(f,E) =
\inf\sum_i \H^n_\infty(f(Q_i))  \zeta^m(Q_i)  ,
$$
where the infimum is over all coverings of $E$ by closed dyadic cubes with pairwise disjoint interiors.
\end{definition}
\begin{remark}
In fact their definition differs from ours
by a constant factor depending on $n$ and $m$ only.
\end{remark}
It follows directly from the definitions and from Lemma~\ref{T27} that
\begin{equation}
\label{eq36}
\Phi^{n,m}_\infty(f,E)=\tilde{\H}^{n,m}_\infty(f,E)\leq\H^{n,m}_\infty(f,E)
\end{equation}
and David and Schul \cite[Question~1.15]{davids} stated an open problem: % whether the following estimate is true:
$$
\text{Is it true that $\H^{n,m}_\infty(f,E)\leq C(n,m)\tilde{\H}^{n,m}(f,E)$?}
$$
As an application of Theorem~\ref{T20} we obtain:
\begin{corollary}
Suppose  $f:Q_0=[0,1]^{n+m}\to X$  is a Lipschitz mapping into a metric space and $E\subset Q_0$. If $ \H_\infty^{n,m}(f,E)=0$ then
$\H^m(f^{-1}(x) \cap E) = 0$ for $\H^n$-a.e. $x \in X$. 
\end{corollary}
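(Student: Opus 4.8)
The plan is to read the corollary as a special instance of the generalized coarea inequality, Theorem~\ref{T20}, with the unit cube $Q_0$ playing the role of the source space $X$ in that theorem and the target metric space $X$ playing the role of $Y$. Concretely, I would apply Theorem~\ref{T20} with exponents $s=n+m$ and $t=n$, so that $s-t=m$; since $f$ is Lipschitz it is in particular uniformly continuous, and the theorem yields
$$
\int_X^* \H^m(f^{-1}(x)\cap E)\, d\H^n(x)\le \Phi^{n,m}(f,E).
$$
In view of \eqref{eq37}, once we show that the right-hand side is zero we get $\H^m(f^{-1}(x)\cap E)=0$ for $\H^n$-almost every $x\in X$, which is exactly the claim.

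So the real task is to prove $\Phi^{n,m}(f,E)=0$. First I would use \eqref{eq36}: the hypothesis $\H_\infty^{n,m}(f,E)=0$ gives $\Phi^{n,m}_\infty(f,E)=\tilde{\H}^{n,m}_\infty(f,E)\le \H^{n,m}_\infty(f,E)=0$, so the ``content-level'' quantity (with $\delta=\infty$) vanishes. The gap — and the one genuinely substantive point in the argument — is that Theorem~\ref{T20} needs the full quantity $\Phi^{n,m}(f,E)=\lim_{\delta\to0^+}\Phi^{n,m}_\delta(f,E)$, not merely its $\delta=\infty$ version. This is precisely what Lemma~\ref{T32} supplies: $Q_0=[0,1]^{n+m}$ with the Euclidean metric is metric doubling and $E\subset Q_0$ is bounded, so Lemma~\ref{T32} (with its parameters $s,t$ instantiated as $n,m$) gives $\Phi^{n,m}(f,E)=0$ the moment we know $\Phi^{n,m}_\infty(f,E)=0$.

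Putting the two displays together yields $\int_X^* \H^m(f^{-1}(x)\cap E)\, d\H^n(x)=0$, and then \eqref{eq37} forces the integrand to vanish $\H^n$-almost everywhere, completing the proof. I expect the only step requiring care to be the reduction from zero mapping content to $\Phi^{n,m}=0$ through Lemma~\ref{T32} (checking that its doubling and boundedness hypotheses are met, and that its $s,t$ correspond to the $n,m$ here); everything else is a direct substitution into Theorem~\ref{T20}, identity \eqref{eq36}, and observation \eqref{eq37}.
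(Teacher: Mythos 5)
Your proposal is correct and follows essentially the same route as the paper's own proof: combine \eqref{eq36} with Lemma~\ref{T32} to get $\Phi^{n,m}(f,E)=0$, feed this into Theorem~\ref{T20} with $s=n+m$, $t=n$, and conclude via \eqref{eq37}. The instantiations you check (Lipschitz $\Rightarrow$ uniformly continuous, $Q_0$ metric doubling, $E$ bounded, Lemma~\ref{T32} applied with exponents $n,m$) are exactly the points the paper relies on.
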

\begin{proof}
It follows from \eqref{eq36} and from Lemma~\ref{T32} that $\Phi^{n,m}(f,E)=0$ and hence Theorem~\ref{T20} yields that
$$
\int_X^* \H^m(f^{-1}(x)\cap E)\, d\H^n(x)=0
$$
and the result follows from \eqref{eq37}.
\end{proof}


\begin{thebibliography}{888}

\bibitem{ambrosiot}
{\sc Ambrosio, L. Tilli, P.:} {\em Topics on analysis in metric spaces.} Oxford Lecture Series in Mathematics and its Applications, 25. Oxford University Press, Oxford, 2004.
%
\bibitem{azzams}
{\sc  Azzam, J., Schul, R.:}
Hard Sard: quantitative implicit function and extension theorems for Lipschitz maps. 
{\em Geom.\ Funct.\ Anal.} 22 (2012), 1062--1123.
%
\bibitem{buragoz}
{\sc Burago, Yu. D., Zalgaller, V. A.:} {\em Geometric inequalities.} Translated from the Russian by A. B. Sosinskiĭ. Grundlehren der Mathematischen Wissenschaften [Fundamental Principles of Mathematical Sciences], 285. Springer Series in Soviet Mathematics. Springer-Verlag, Berlin, 1988.
%
\bibitem{davies}
{\sc Davies, R. O.:}  Increasing sequences of sets and Hausdorff measure, 
{\em Proc.\ London Math.\ Soc.} 20 (1970), 222-236.
%
\bibitem{davids}
{\sc David, G. C., Schul, R.:}
Quantitative decompositions of Lipschitz mappings into metric spaces.
(Preprint 2020) arXiv:2002.10318v3. 
%
\bibitem{del}
{\sc Dellacherie, C.:}
{\em Ensembles Analytiques, Capacit\'es, Mesures de Hausdorff.} Lecture Notes in Math 295, Springer-Verlag, 1972.
%
\bibitem{eilenberg}
{\sc Eilenberg, S.:}  On $\Phi$ measures, {\em Annales de la Soci\'et\'e Polonaise de Mathemmatique},  17 (1938),  252--253.
%
\bibitem{eh}
{\sc  Eilenberg, S., Harrold, O. G., Jr.:} Continua of finite linear measure. I. 
{\em Amer.\ J. Math.} 65 (1943), 137--146.
\bibitem{evans}
%
{\sc Evans, L. C., Gariepy, R. F.:}
\emph{Measure theory and fine properties of functions.} 
Revised edition. Textbooks in Mathematics. CRC Press, Boca Raton, FL, 2015.
%
\bibitem{federer}
{\sc Federer, H.:} {\em Geometric measure theory.} Die Grundlehren der mathematischen Wissenschaften,
Band 153 Springer-Verlag New York Inc., New York 1969.
%
\bibitem{federer2}
{\sc Federer, H.:}
Some integralgeometric theorems.
{\em Trans.\ Amer.\ Math.\ Soc.} 77 (1954), 238--261.
%
\bibitem{HKK}
{\sc Haj\l{}asz, P., Korobkov, M. V., Kristensen, J.:}
A bridge between Dubovitskiĭ-Federer theorems and the coarea formula.
{\em J. Funct.\ Anal.} 272 (2017), 1265--1295.
%
\bibitem{HZ}
{\sc Haj\l{}asz, P., Zimmerman, S.:} An implicit function theorem for Lipschitz mappings into metric spaces
{\em  Indiana Univ.\ Math.\ J.} 69 (2020), 201-224.
%
\bibitem{heinonen}
{\sc Heinonen, J.:}
{\em Lectures on analysis on metric spaces.} Universitext. Springer-Verlag, New York, 2001.
%
\bibitem{howroyd}
{\sc  Howroyd, J. D.:} 
On dimension and on the existence of sets of finite positive Hausdorff measure. 
{\em Proc.\ London Math.\ Soc.} 70 (1995), 581-604.
%
\bibitem{HW}
{\sc Hurewicz, W., Wallman, H.:} {\em Dimension Theory.} Princeton Mathematical Series, v. 4. Princeton University Press, Princeton, N. J., 1941.
%
\bibitem{kelly1}
{\sc Kelly, J. D.:} The increasing sets lemma, and the approximation of analytic sets from within by compact sets, for the measures generated by method III. 
{\em J. London Math.\ Soc.} 8 (1974), 29-43.
%
\bibitem{kelly2}
{\sc Kelly, J. D.:} A method for constructing measures appropriate for the study of Cartesian products. {\em Proc.\ London Math.\ Soc.}  26 (1973), 521-546.
%
\bibitem{KP}
{\sc Krantz, S. G., Parks, H. R.:}
{\em Geometric integration theory.}
Cornerstones. Birkh\"auser Boston, Inc., Boston, MA, 2008
%
\bibitem{maly}
{\sc Mal\'y, J.:} Coarea integration in metric spaces. 
NAFSA 7-{\em Nonlinear analysis, function spaces and applications.} Vol. 7, 148–192,  Czech.\ Acad.\ Sci., Prague, 2003. 
%
\bibitem{nazarov}
{\sc Mathoverflow.}
Bounding an “integral” from below by the Hausdorff measure of the domain.
https://mathoverflow.net/q/355214/121665.
%
\bibitem{MO2}
{\sc Mathoverflow}
Unknown work of N\"obeling on topological/Hausdorff dimension.
\text{https://mathoverflow.net/q/360384/121665}
%
\bibitem{mattila2}
{\sc Mattila, P.:} Personal communication.
%
\bibitem{mattila}
{\sc Mattila, P.:}
{\em Geometry of sets and measures in Euclidean spaces. Fractals and rectifiability.} Cambridge Studies in Advanced Mathematics, 44. Cambridge University Press, Cambridge, 1995.
%
\bibitem{menger}
{\sc Menger, K.:} {\em Ergebnisse eines mathematischen Kolloquiums.} (German) [Results of a mathematical colloquium] With contributions by J. W. Dawson, Jr., R. Engelking and W. Hildenbrand, a foreword by G. Debreu and an afterword in English by F. Alt. Edited by E. Dierker and K. Sigmund. Springer-Verlag, Vienna, 1998.
%
\bibitem{nobeling}
{\sc N\"obeling, G.}
Hausdorffsche und mengentheoretische Dimension.
{\em Ergebnisse math.\ Kolloquium Wien} 3 (1931) 24-25 .
%
\bibitem{reichel}
{\sc Reichel, L. P.:} 
{\em The coarea formula for metric space valued maps.} Ph.D. thesis, ETH Z\"urich, 2009.
%
\bibitem{simon}
{\sc Simon, L.:} {\em Lectures on geometric measure theory.} Proceedings of the Centre for Mathematical Analysis, Australian National University, 3. Australian National University, Centre for Mathematical Analysis, Canberra, 1983.
%
\bibitem{szpilrajn}
{\sc Szpilrajn E.:} La dimension et la mesure, {\em Fund.\ Math.} 28 (1937), 81--89.
%


\end{thebibliography}
\end{document}